\documentclass[reqno,11pt]{amsart}

\usepackage{graphicx}
\usepackage{relsize}
\usepackage[ansinew]{inputenc}
\usepackage{amsfonts,epsfig}
\usepackage{latexsym}
\usepackage{mathabx}
\usepackage{amsmath}
\usepackage{amssymb}
\usepackage{color}
\usepackage{hyperref}
\usepackage{mathrsfs}

\newtheorem{theorem}{Theorem}
\newtheorem{lemma}[theorem]{Lemma}
\newtheorem{corollary}[theorem]{Corollary}

\newtheorem{proposition}[theorem]{Proposition}

\newtheorem{lettertheorem}{Theorem}
\newtheorem{letterlemma}[lettertheorem]{Lemma}

\theoremstyle{definition}

\theoremstyle{remark}

\numberwithin{equation}{section}

\numberwithin{equation}{section}

\newcommand{\D}{\mathbb{D}}

\renewcommand{\phi}{\varphi}

\begin{document}
\title[Toeplitz and Hankel operators on Bergman spaces with doubling weights]{Toeplitz and Hankel operators on Bergman spaces with doubling weights}

\keywords{Toeplitz operator, Hankel operator, Weighted Bergman spaces, $\mathcal {D}$ weight.}
	
\thanks{This work was supported by National Key R\&D Program of China (2024YFA1013400) and the National Natural Science Foundation of China (Grant Numbers 12571136, 12231005 and 12401150)}

\makeatletter
\@namedef{subjclassname@2020}{\textup{2020} Mathematics Subject Classification}
\makeatother
\subjclass[2020]{47B38, 30H20}

\author
{Yongjiang Duan}
\address
{Yongjiang Duan: Department of Mathematics,
		Jinan University, Guangzhou, Guangdong, 510632, P.R.China}
\email{yjduan@jnu.edu.cn}

\author
{Kunyu Guo}
\address
{Kunyu Guo: School of Mathematical Sciences, Fudan University, Shanghai 200433, P.R.China}
\email{kyguo@fudan.edu.cn}

\author
{Junhan Hong}
\address
{Junhan Hong: School of Mathematics and Statistics,
		Northeast Normal University, Changchun, Jilin, 130024, P.R.China}
\email{hongjh815@nenu.edu.cn}

\author
{Fugang Yan}
\address
{Fugang Yan: College of Mathematics and Statistics, Chongqing University, Chongqing 401331, P.R.China;
and Key Laboratory of Nonlinear Analysis and its Applications (Chongqing University), Ministry of Education, Chongqing 401331, P.R.China}
\email{fugangyan@cqu.edu.cn}

\begin{abstract}
In this paper, we focus on the boundedness and compactness of the Toeplitz operators and the Hankel operators on the $\mathcal{D}$-weighted Bergman spaces $A_{\omega}^p$ ($1\leq p<\infty$). In particular, the boundedness of Toeplitz operators with $\mathrm{BMO}_\omega^p$-symbols on $A_{\omega}^p$ is characterized in terms of the Berezin-type transform. Several sufficient conditions for the Toeplitz operators $T_f^\omega$ with locally integrable symbols to be bounded (resp. compact) for each $1\leq p<\infty$ are presented. Moreover, sufficient and necessary conditions for the boundedness of the Toeplitz operator $T_{\bar{f}}^\omega:A_\omega^1\rightarrow A_\omega^1$ and the Hankel operator $H_{\bar{f}}^\omega:A_\omega^1\rightarrow L_\omega^1$ with co-analytic symbol are established.
\end{abstract}
	
\maketitle

\section{Introduction and main results}

Weights arise naturally in the study of Hardy-Littlewood maximal operators on Lebesgue spaces. They are closely related to the singular integral operators in harmonic analysis. Recently, weights also played more and more important role in the study of operator theory. Let $\mathbb{D}$ be the open unit disk and $\mathrm{d}A(z)$ be the normalized area measure on $\mathbb{D}$. A weight $\omega$ on $\mathbb{D}$ is a positive integrable function, and if it satisfies $\omega(z)=\omega(|z|)$, then it is said to be a radial weight. In the past decade, several classes of radial weights on the unit disk were introduced and systematically investigated by many authors in the study of
analytic function spaces and related operator theory. In this paper, we will focus on a particular class
of doubling radial weights that have been studied extensively by Pel\'aez and R\"atty\"a.

Given a radial weight $\omega$ on $\mathbb{D},$ we say that $\omega$ is a doubling weight, denoted by $\omega \in \widehat{\mathcal{D}}$, if $\widehat{\omega}(z)=\int_{|z|}^{1}\omega(s)\mathrm{d}s$ satisfies the following condition: there exists a $C=C(\omega)\geq1$ such that
$$\widehat{\omega}(r)\leq C\widehat{\omega}\left(\frac{1+r}{2}\right),\quad{0\leqslant r<1}.$$
If there exist $K=K(\omega)>1$ and $C=C(\omega)>1$ such that
$$\widehat{\omega}(r)\geq C\widehat{\omega}\left(1-\frac{1-r}{K}\right),\quad{0\leqslant r<1},$$ then we say that $\omega$ is a reverse doubling weight, denoted by $\omega\in \widecheck {\mathcal{D}}$. Let $\mathcal{D}=\widehat{\mathcal{D}}\cap \widecheck{\mathcal{D}}$. Both $\widehat{\mathcal{D}}$ and $\mathcal{D}$ appear naturally in many instances in the operator theory of Bergman spaces induced by radial weights \cite{PR2}. Special subclasses of $\mathcal{D}$ include the collection of regular weights, denoted by $\mathcal{R}$, consisting of those continuous radial weights satisfying $\widehat{\omega}(r) \asymp\omega(r)(1-r)$ for all $0 \leq r<1$. For the basic properties of the radial weights, we refer to \cite{Pe,PR5} and the references therein. It is known that $\mathcal{R}\subsetneqq \mathcal{D}\subsetneqq \widehat{\mathcal{D}}.$

For a radial weight $\omega$ and $0<p<\infty$, we write $L^p_{\omega}=L^p(\mathbb{D}, \omega \mathrm{d}A)$. The weighted Bergman space induced by $\omega$ is defined by $A_\omega^p=L_\omega^p\cap H(\mathbb{D})$, where $ H(\mathbb{D})$ denotes the space of all analytic functions on $\mathbb{D}$. We will say that $A_\omega^p$ is a $\mathcal D$-weighted Bergman space if $\omega\in \mathcal D$. For $p=2$, $A_\omega ^2$ is a reproducing kernel Hilbert space whose reproducing kernel function is given by
$$B_z^{\omega}(\zeta)=\sum\limits_{m=0}^{\infty}\frac{(\bar{z}\zeta)^m}{2\omega_{2m+1}},$$
where $$\omega_{x}=\int_{0}^{1} t^{x} \omega(t)\mathrm{d}t,\quad 0< x<\infty.$$
In particular, if $\omega$ is the standard weight
$(\alpha+1)(1-|z|^2)^\alpha$ with $-1<\alpha<\infty$, the reproducing kernel of the standard weighted Bergman space $A^2_{\alpha}$ is given by $K_z^\alpha(\zeta)={{(1-\bar{z}\zeta)}^{-(\alpha+2)}}$. Let $P_\omega $ denote the orthogonal projection from $L^2_{\omega}$ onto $A^2_{\omega}$. Then $P_{\omega}$ admits the following integral representation:
$$
P_\omega(f)(z)=\int_{\mathbb{D}}f(\zeta)\overline{B_z^{\omega}(\zeta)}\omega(\zeta)\mathrm{d}A(\zeta), \quad f\in{L^2_{\omega}}.
$$
For more information about the weighted Bergman space $A_{\omega}^p$ and the weighted Bergman projection $P_{\omega}$, we refer to \cite{PPR,PR5,PR,PR4,PR6,PR2,PR1,PRS}.

For $f\in L_\omega^1$, the Toeplitz operator $T^{\omega}_f$ is densely defined in $A_\omega^p$ by
\begin{equation}\label{T_f}
T_f^\omega(g)(z)=\int_{\mathbb{D}}f(\zeta)g(\zeta)\overline{B_z^\omega(\zeta)}\omega(\zeta)\mathrm{d}A(\zeta), \quad g\in H^\infty,
\end{equation}
where $H^\infty$ is the space of all bounded analytic functions in $\D$. In the classical Bergman space ($\omega(z)=(\alpha+1)(1-|z|^2)^{\alpha}$ with $\alpha>-1$), the theory of the Toeplitz operators has been widely studied. We refer to \cite{Luecking, MS, Zhu3} for the Toeplitz operator with non-negative symbols and to \cite{Zorboska} for the Toeplitz operator with ${\rm BMO}$-symbols. For the general symbols, Taskinen and Virtanen \cite{TV1, TV2} obtained a sufficient condition for the Toeplitz operator $T_f$ with locally integrable symbols to be bounded on the Bergman space via some ``average" of its symbol; see also \cite{HLTV} for the higher dimension case. Recently, a sufficient condition in terms of the generalized Carleson square was given by Zheng and Yan \cite{YZ}. In the $\mathcal R$-weighted Bergman space, the boundedness and compactness of Toeplitz operators with positive symbols between $A_\omega^p$ and $A_\omega^q$ with $\omega \in \mathcal R$ was characterized by Pel\'aez et al. \cite{PRS}. Some other earlier partial results in this area include \cite{DGWW2, DGWW, DRWW}. Although there are many interesting results in this area, it is still an open problem to find a sufficient and necessary condition for the Toeplitz operator $T_f^{\omega}$ with $f\in L_{\omega}^1$ to be bounded on $A_{\omega}^p$ \cite{TV3}.

In this paper, we characterize the boundedness of Toeplitz operators with $\mathrm{BMO}_\omega^p$-symbols on $A_{\omega}^p$ induced by $\mathcal D$ weight in terms of the Berezin-type transform. We also give a sufficient condition for the Toeplitz operators $T_{f}^{\omega}$ with locally integrable symbols to be bounded (resp. compact) on $A_{\omega}^p$ ($\omega \in \mathcal D$) for each $1\leq p<\infty$. It was shown in \cite[Theorem 3 and Theorem 5]{PR2} that the class of $\mathcal{D}$ weights is the largest class of radial weights $\omega$ such that the Bergman projection $P_\omega$ satisfies the $L^\infty$-$\text{BMO}$ estimate and the Littlewood-Paley formula holds in $A_{\omega}^p$.

Before stating the main results, we introduce some additional notation. Let $\beta(\cdot,\cdot)$ denote the Bergman metric in $\mathbb D$, namely,
$$\beta(z,w)=\frac{1}{2}\log \frac{1+|\varphi_z(w)|}{1-|\varphi_z(w)|},\quad z,w\in\mathbb D,$$
where $\varphi_z(w)=\frac{z-w}{1-\bar{z}w}$. For $r>0$ and $z\in\mathbb D$, we denote by $D(z,r)$ the Bergman disc $\{{w\in \mathbb{D}:\beta(w,z)<r}\}$. For $\omega\in \mathcal{D}$, it follows from Lemma \ref{Dhat}(ii) and Lemma \ref{Dcheck} in Section 2 that there exists an $r_0=r(\omega)>0$ such that
\begin{equation}\label{Dweight2}
\omega(D(z,r)):=\int_{D(z,r)}\omega(\zeta)\mathrm{d}A(\zeta)\asymp \widehat{\omega}(z)(1-|z|),\quad{z\in\mathbb{D}},\,\,\forall\, r\geq r_0.
\end{equation}
For $r\geq r_0$ and $1 \leq p<\infty$, write
$$
\mathrm{MO}_{\omega, r}^p(f)(z)=\left(\frac{1}{\omega(D(z, r))} \int_{D(z, r)}\left|f(\zeta)-\widehat{f}_{r, \omega}(z)\right|^p \omega(\zeta) \mathrm{d}
A(\zeta)\right)^{1/p},
$$
where
\begin{equation}\label{eq1.1}
\widehat{f}_{r, \omega}(z)=\frac{\int_{D(z, r)} f(\zeta) \omega(\zeta) \mathrm{d} A(\zeta)}{\omega(D(z, r))}, \quad z \in \mathbb{D}.
\end{equation}
The space $\mathrm{BMO}_{\omega,  r}^p$ with $r\geq r_0$ consists of $f \in L_\omega^p$ such that
$$
\|f\|_{\mathrm{BMO}_{\omega, r}^p}=\sup _{z \in \mathbb{D}} \Big(\mathrm{MO}_{\omega, r}^p(f)(z)\Big)<\infty.
$$
By \cite[Theorem 11]{PPR}, for $r\geq r_0$ the norms $\|\cdot\|_{\mathrm{BMO}_{\omega, r}^p}$ and $\|\cdot\|_{\mathrm{BMO}_{\omega, r_0}^p}$ are equivalent, and thus
\begin{equation}\label{BMO1}
\mathrm{BMO}_{\omega,r}^p=\mathrm{BMO}_{\omega, r_0}^p, \quad r \geq r_0
\end{equation}
with equivalent norms. This space (independent of $r\geq r_0$) is denoted by $\mathrm{BMO}_{\omega}^p$. It will be assumed that the norm is always calculated with respect to a fixed $r \geq r_0$. It is worth noticing that if $\omega\in\widehat{\mathcal{D}}$, for a fixed $r>0$, $\omega\big(D(z,r)\big)$ may equal to zero for some $z$ close to the boundary,
see example in \cite[Proposition 3]{PR1}. This is the reason why we consider $\mathrm{BMO}_{\omega,r}^p$ with $\mathcal{D}$ weights. Notice that for $\omega\in\mathcal{R}$, straightforward calculations show that for each $r_1,r_2\in (0,\infty)$, we have $\mathrm{BMO}_{\omega,r_1}^p=\mathrm{BMO}_{\nu,r_2}^p$ with equivalent norms, where $\nu(z)\equiv 1$.

For $z,\zeta \in\mathbb{D}$ and $\alpha>-1$, define
$
k_{\omega,z}^\alpha(\zeta)=K_z^\alpha(\zeta) / \|K_z^\alpha\|_{A_\omega^2}.
$
For $g\in L_\omega^1$, the Berezin-type transform is defined as
$$
B_\omega^\alpha(g)(z)=\langle gk_{\omega,z}^\alpha, k_{\omega,z}^\alpha\rangle_{L^2_\omega}, \quad{z\in\mathbb{D}}.
$$
The first aim in this paper is to utilize the Berezin-type transform $B_\omega^\alpha$ to determine the boundedness of Toeplitz operator $T_f^\omega$ with $\mathrm{BMO}_{\omega,r}^p$-symbols on the $\mathcal{D}$-weighted Bergman space $A_\omega^p$.

\begin{theorem}\label{TB}
Let $\omega\in \mathcal{D}$ and $1<p<\infty$. For $f\in \mathrm{BMO}^p_{\omega}$, the Toeplitz operator $T_f^\omega$ is bounded on $A_\omega^p$ if and only if there exists $\alpha_0=\alpha(\omega,p)>0$ such that $B_\omega^\alpha(f)$ is bounded in $\D$ for any
$\alpha>\alpha_0.$
\end{theorem}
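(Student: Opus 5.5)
The plan is to decompose $f\in\mathrm{BMO}^p_\omega$ into a ``smooth'' part and an ``oscillating'' part: $f=\widehat f_{r,\omega}+(f-\widehat f_{r,\omega})$, with $r\ge r_0$ fixed. The standard BMO machinery, adapted to $\mathcal D$ weights (using \eqref{Dweight2}, \eqref{BMO1} and \cite[Theorem 11]{PPR}), should give two facts. First, the remainder $g=f-\widehat f_{r,\omega}$ satisfies that $|g|^p\omega\,dA$, or at least $|g|\omega\,dA$, is a Carleson-type measure for $A^p_\omega$: its averages over Bergman discs are bounded by $\|f\|_{\mathrm{BMO}^p_\omega}$. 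Second, $\widehat f_{r,\omega}$ is Lipschitz with respect to $\beta$, so that $|\widehat f_{r,\omega}(z)-\widehat f_{r,\omega}(\zeta)|\lesssim \|f\|_{\mathrm{BMO}}(1+\beta(z,\zeta))$.

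\textbf{Step 1 (the oscillating part is always harmless).} We show that $T^\omega_g$ is bounded on $A^p_\omega$ and that $B^\alpha_\omega(g)$ is bounded for all large $\alpha$. For $T^\omega_g$, we use the characterization of Toeplitz operators with positive symbols (Carleson measures) on $\mathcal D$-weighted spaces. Writing $|T^\omega_g h|\le T^\omega_{|g|}$-type integrals and applying a Schur test with kernel $|B^\omega_z(\zeta)|$ and test functions $\widehat\omega(\zeta)^{-s}$ (the kernel estimates for $\mathcal D$ weights from \cite{PR2}) bounds it by $\sup_z$ of the disc averages of $|g|$. For $B^\alpha_\omega(g)$, the known estimate $\|K^\alpha_z\|_{A^2_\omega}^2\asymp (1-|z|)^{-2\alpha-4}\widehat\omega(z)(1-|z|)$ holds for $\alpha$ large (this is where $\alpha_0$ enters, via the reverse doubling condition). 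Covering $\mathbb D$ by a lattice of discs and summing $|K^\alpha_z|^2$ over the discs, weighted by the bounded averages of $|g|$, then gives $\sup_z|B^\alpha_\omega(g)(z)|\lesssim\|f\|_{\mathrm{BMO}}$.

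\textbf{Step 2 (reduction to the averaged symbol).} By Step 1, $T^\omega_f$ is bounded iff $T^\omega_{\widehat f}$ is bounded, and $B^\alpha_\omega f$ is bounded iff $B^\alpha_\omega\widehat f$ is bounded, where $\widehat f=\widehat f_{r,\omega}$. Moreover, the Lipschitz property gives $|B^\alpha_\omega(\widehat f)(z)-\widehat f(z)|\le \int|\widehat f(\zeta)-\widehat f(z)||k^\alpha_{\omega,z}|^2\omega\,dA\lesssim \int(1+\beta(z,\zeta))|k^\alpha_{\omega,z}|^2\omega\,dA$. This is uniformly bounded for large $\alpha$, since the logarithmic growth of $\beta$ is absorbed by increasing $\alpha$. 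So the condition on $B^\alpha_\omega(f)$ is equivalent to $\widehat f\in L^\infty$, that is, to $f$ having bounded disc averages.

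\textbf{Step 3 (the equivalence).} If $\widehat f$ is bounded, then $T^\omega_{\widehat f}$ is bounded on $A^p_\omega$ because $P_\omega$ is bounded on $L^p_\omega$ for $\omega\in\mathcal D$ and $1<p<\infty$. This direction is where $p>1$ is used. Conversely, assume $T^\omega_f$ is bounded. We test it on $h=K^\alpha_z/\|K^\alpha_z\|_{A^p_\omega}$ and pair with $K^\alpha_z/\|K^\alpha_z\|_{A^{p'}_{\omega}}$ using the $A^2_\omega$ pairing; the duality $(A^p_\omega)^*\simeq A^{p'}_\omega$ holds for $\mathcal D$ weights. This gives $|\langle fK^\alpha_z,K^\alpha_z\rangle_{L^2_\omega}|\lesssim\|T^\omega_f\|\,\|K^\alpha_z\|_{A^p_\omega}\|K^\alpha_z\|_{A^{p'}_\omega}$. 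Finally we check via the norm estimates for $\mathcal D$ weights that $\|K^\alpha_z\|_{A^p_\omega}\|K^\alpha_z\|_{A^{p'}_\omega}\asymp\|K^\alpha_z\|^2_{A^2_\omega}$ for $\alpha>\alpha_0(\omega,p)$. This yields boundedness of $B^\alpha_\omega(f)$, and in fact this direction does not even need Step 2.

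\textbf{Main obstacle.} The delicate point is Step 1 together with the kernel norm asymptotics $\|K^\alpha_z\|^q_{A^q_\omega}\asymp \widehat\omega(z)(1-|z|)^{1-q(\alpha+2)}$ for $\alpha$ beyond a threshold depending on $\omega$ and $p$. Without these asymptotics, neither the pairing estimate nor the absorption of the BMO oscillation works. I would first establish this via the integral $\int_0^1 \widehat\omega(t)(1-t|z|)^{-q(\alpha+2)}dt$ and Lemma~\ref{Dhat}/Lemma~\ref{Dcheck}, splitting at $t=|z|$, and then feed it into the lattice sums.
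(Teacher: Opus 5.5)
Your proof is correct, but for the sufficiency it takes a different route from the paper.

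\textbf{The paper's route.} The paper never splits $f$. It invokes Proposition~\ref{BMOeta} to place $f$ in the global oscillation space $\mathcal{BMO}^p_{\omega,\alpha}$. It then uses the one-line bound $B^\alpha_\omega(|f|)(z)\le\mathcal{MO}^p_{\omega,\alpha}(f)(z)+|B^\alpha_\omega(f)(z)|$, an idea of Zorboska. So boundedness of $B^\alpha_\omega(f)$ immediately makes $|f|\omega\,\mathrm{d}A$ a Carleson measure, and $T^\omega_{|f|}$, hence $T^\omega_f$, is bounded by the positive-symbol theory. The paper calls the necessity obvious. Your computation $\|K^\alpha_z\|_{A^p_\omega}\|K^\alpha_z\|_{A^{p'}_\omega}\asymp\|K^\alpha_z\|^2_{A^2_\omega}$ for large $\alpha$ is exactly what that direction amounts to.

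\textbf{Your route.} You follow the classical unweighted scheme:
\begin{itemize}
\item split $f=\widehat f_{r,\omega}+(f-\widehat f_{r,\omega})$ as $\mathrm{BO}+\mathrm{BA}$;
\item use Carleson theory only for the $\mathrm{BA}$ part;
\item handle the $\mathrm{BO}$ part with $|B^\alpha_\omega(\widehat f_{r,\omega})-\widehat f_{r,\omega}|\lesssim 1$, essentially the computation \eqref{eq3.6} in Proposition~\ref{BMOr};
\item finish with the $L^p_\omega$-boundedness of $P_\omega$.
\end{itemize}

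\textbf{Comparison.} The paper's argument is shorter. It needs neither the $\mathrm{BO}$-regularity of $\widehat f_{r,\omega}$ nor the boundedness of $P_\omega$ on $L^p_\omega$, at the price of Proposition~\ref{BMOeta}. Yours bypasses the global oscillation space. It also yields the additional equivalent condition $\widehat f_{r,\omega}\in L^\infty$, i.e.\ the full three-way equivalence familiar from the standard Bergman space. Both arguments really show the same thing: for $f\in\mathrm{BMO}^p_\omega$, boundedness of $B^\alpha_\omega(f)$ is equivalent to $f\in\mathrm{BA}^1_\omega$.

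Some of your justifications need adjusting, though none affects the conclusion.

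\textbf{Step 2.} Enlarging $\alpha$ only absorbs the growth of $\beta(z,\zeta)$ in $|1-\bar z\zeta|/(1-|z|)$. In the top part of the Carleson box of $z$, one has $|1-\bar z\zeta|\asymp 1-|z|$. There the kernel gives no decay, and $\beta(z,\zeta)$ grows like $\log\frac{1-|z|}{1-|\zeta|}$. So you need $\int_{|z|}^1\omega(s)\log\frac{1-|z|}{1-s}\,\mathrm{d}s=\int_{|z|}^1\frac{\widehat\omega(s)}{1-s}\,\mathrm{d}s\lesssim\widehat\omega(z)$. This is where $\omega\in\widecheck{\mathcal D}$ is used, via Lemma~\ref{Dcheck} or Lemma~\ref{Dweight1} with $\omega_{[-p\varepsilon]}$, as in \eqref{eq3.6}.

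\textbf{Where reverse doubling enters.} The kernel asymptotics $\|K^\alpha_z\|^q_{A^q_\omega}\asymp\widehat\omega(z)(1-|z|)^{1-q(\alpha+2)}$ depend only on the doubling exponent of Lemma~\ref{Dhat}(ii). So reverse doubling is not what fixes $\alpha_0$ there. It is needed instead for \eqref{Dweight2} and for the $\mathrm{BO}+\mathrm{BA}$ decomposition.

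\textbf{The Schur test in Step 1.} A Schur test cannot be run directly on $h\mapsto\int_{\mathbb D}|g||h||B^\omega_z|\omega\,\mathrm{d}A$. Since $|g|$ is unbounded, this operator is not bounded on $L^p_\omega$. You must use the analyticity of $h$. One way is the bound $\int_{\mathbb D}|g||h||k|\omega\,\mathrm{d}A\lesssim\|hk\|_{A^1_\omega}$ for $k\in A^{p'}_\omega$, followed by duality. Alternatively, simply cite the Carleson-measure characterization, as the paper does.
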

The key to prove Theorem \ref{TB} is to establish a connection between $\mathrm{BMO}^p_{\omega,r}$  and the Berezin-type transform $B_\omega^\alpha$ (see Proposition \ref{BMOr}). In \cite{KW}, Keshavarzi and Wu introduced the global mean oscillations via a certain Berezin-type transform $\mathrm{MO}_{\omega,\alpha}(f)$ and use it to describe the membership in Schatten class of the Hankel operators. Inspired by this, for $1\leq p <\infty$ we define
$$
\mathcal{ MO}_{\omega,\alpha}^p(f)(z)=\left(\int_{\mathbb{D}}\left|f(\zeta)-B_\omega^\alpha(f)(z)\right|^p |k_{\omega,z}^\alpha(\zeta)|^2
\omega(\zeta)\mathrm{d}A(\zeta)\right)^{1/p}.
$$
Let $ \mathcal{BMO}_{\omega,\alpha}^{p}$ denote the space of all $f \in L_\omega^p$ such that
$$
\|f\|_{ \mathcal{BMO}_{\omega,\alpha}^{p}}=\sup _{z \in \mathbb{D}} \Big(\mathcal{ MO}_{\omega,\alpha}^p(f)(z)\Big)<\infty.
$$
It turns out that $\mathcal{BMO}_{\omega,\alpha}^{p}$ coincides with $\operatorname{ BMO}_{\omega,r}^{p}$ when $r$ and $\alpha$ are large enough (see Proposition \ref{BMOeta}). Based on this identification and some idea of Zorboska \cite{Zorboska}, Theorem \ref{TB} is proved.

The second aim of this paper is to give some sufficient conditions for the boundedness and compactness of $T_f^\omega$ with locally integrable symbols on $A_\omega^p$. For $0<h<1$, $\theta\in[0,2\pi]$ and $0<\tau\leq1$, we define the $\tau$-Carleson square $S_h^\tau(e^{i\theta})$ to be
$$
S_h^\tau(e^{i\theta})=\{re^{it}:1-h<r<1,|t-\theta|\leq \pi\tau h\}.
$$
It is easy to verify that $\omega(S_{1-h}^{1}(e^{i\theta}))\asymp \widehat{\omega}(h)(1-h)$. For $f\in L_{\rm{{loc}}}^1(\mathbb{D},\omega \mathrm{d}A)$, $0<r<1$, $\theta\in[0,2\pi]$ and $0<\tau\leq1$, we define a weighted mean of $f$ to be
$${\bf M}(f,r,\theta,\tau)=\frac{1}{\widehat{\omega}(r)(1-r)}\int_{S^\tau_{1-r}(e^{i\theta})}f(z)\omega(z)\mathrm{d}A(z).$$
The following theorem is concerned with the case $1<p<\infty$.

\begin{theorem} \label{main1}
Let $\omega\in\mathcal{D}$ and $f\in L_{\rm{{loc}}}^1(\mathbb{D},\omega \mathrm{d}A)$. Then the following conclusions hold.
\begin{enumerate}
\item [(i)] If
\begin{equation}\label{A1}
\sup\Big\{\big|{\bf M}(f,r,\theta,\tau)\big|:0<r<1,\,\,\theta\in[0,2\pi],\,\, 0<\tau\leq1\Big\}<\infty,
\end{equation}
then $T_f^\omega: A_\omega^p \rightarrow A_\omega^p$ is well-defined by \eqref{T_f} and bounded for $1<p<\infty$.

\item [(ii)]  If
\begin{equation}\label{B0}
\lim_{r\rightarrow1^-}\sup\Big\{\big|{\bf M}(f,r,\theta,\tau)\big|:\theta\in[0,2\pi],\,\, 0<\tau\leq1\Big\}=0,
\end{equation}
then $T_f^\omega: A_\omega^p \rightarrow A_\omega^p$ is compact for $1<p<\infty$.
\end{enumerate}
\end{theorem}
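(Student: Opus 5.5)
The plan follows the Taskinen--Virtanen / Yan--Zheng strategy: turn the averaging condition \eqref{A1} into a pointwise estimate for the integral kernel by an integration-by-parts (Green / Stokes) argument, and then apply a Schur-type test. The first observation is that \eqref{A1} says the signed measure $\mu=f\omega\,\mathrm{d}A$ gives uniformly bounded normalized mass to every polar rectangle $S^\tau_{1-r}(e^{i\theta})$. Sums and differences of such rectangles give every polar rectangle $\{\rho e^{it}: r<\rho<1,\ t_1<t<t_2\}$. I would therefore introduce the "primitive"
$$F(\rho e^{it})=\int_\rho^1\int_{0}^{t} f(s e^{i\sigma})\,\omega(s)\,s\,\mathrm{d}\sigma\,\mathrm{d}s .$$
Condition \eqref{A1} then gives the estimate
$$|F(\rho e^{it})-F(\rho e^{it'})|\lesssim \widehat{\omega}(\rho)(1-\rho)\Big(1+\frac{|t-t'|}{1-\rho}\Big).$$
Note that $\tau$ is only allowed up to $1$, so arcs longer than $1-\rho$ have to be cut into about $|t-t'|/(1-\rho)$ pieces. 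That is the source of the factor in parentheses, which is harmless after Schur weights are inserted.

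The next step is to write $\langle T_f^\omega g,h\rangle_{A^2_\omega}=\int_{\mathbb D} f\,g\,\bar h\,\omega\,\mathrm{d}A$ for $g,h$ polynomials. I would rewrite it in polar coordinates and integrate by parts twice: once in $s$, using $\partial_s$ of the radial integral $\int_s^1 f\omega$, and once in $t$. This transfers the symbol onto $F$ and the derivatives onto $g\bar h$. The result should be a bound of the form
$$|\langle fg,h\rangle_{L^2_\omega}|\lesssim \int_{\mathbb D}\widehat{\omega}(z)(1-|z|)\Big(|D(g\bar h)(z)|+(1-|z|)|D^2(g\bar h)(z)|\Big)\,\frac{\mathrm{d}A(z)}{1-|z|}\,,$$
where $D$ denotes the relevant mixed derivatives $\partial_\rho\partial_t$ etc. Boundary terms at $\rho\to1$ vanish because $F(\rho e^{it})\to0$; they vanish at $t=0,2\pi$ after subtracting the mean in $t$, which is the reason for using differences. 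Roughly, one picks up $\widehat{\omega}(z)(1-|z|)|\partial_\rho\partial_t(g\bar h)|$, and $\widehat{\omega}(z)/(1-|z|)\cdot(1-|z|)^2 \approx$ a $\widehat\omega$-weighted derivative pairing. Expanding the derivatives by the product rule gives terms like $|g'||h'|(1-|z|)^2$, $|g''||h|(1-|z|)^2$, and so on, all integrated against $\widehat{\omega}(z)/(1-|z|)\,\mathrm{d}A\approx$ a $\mathcal D$-weight. Hölder's inequality together with the Littlewood--Paley formula for $\omega\in\mathcal D$ (\cite{PR2}, $\|g\|_{A^p_\omega}\asymp \|g'(1-|z|)\|_{L^p_\omega}+|g(0)|$, and similarly for the second derivative) bounds each term by $\|g\|_{A^p_\omega}\|h\|_{A^{p'}_{\omega}}$. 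Finally, the duality $(A^p_\omega)^*\simeq A^{p'}_\omega$ under the $A^2_\omega$ pairing, valid since $P_\omega$ is bounded on $L^p_\omega$ for $\omega\in\mathcal D$ and $1<p<\infty$, gives boundedness of $T_f^\omega$ and shows that \eqref{T_f} extends. This is where $p>1$ is used.

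The main obstacle is the second integration by parts in $t$ when the arc is long. The bound on $F$ grows linearly in $|t-t'|/(1-\rho)$, so a naive estimate loses. To handle this I would localize first: decompose $\mathbb D$ into dyadic top halves of Carleson boxes $R_{n,k}$ of size $2^{-n}$. On each box, subtract the box average of $F$ so that only local oscillations appear, at scale $\tau\le1$. The errors would then be summed using the decay of $\partial_t(g\bar h)$ on neighbouring boxes. An alternative I would try if this bookkeeping fails is to integrate by parts only in the radial direction, and to control the angular direction by approximating $g\bar h$ on each box by its angular mean plus a Lipschitz error.

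For (ii) I would repeat the argument with $f\chi_{\{|z|>R\}}$. Condition \eqref{B0} makes the constant in the estimate for $F$ small, so $\|T_{f\chi_{|z|>R}}^\omega\|\to0$ as $R\to1$. On the other hand, $T^\omega_{f\chi_{|z|\le R}}$ is compact because $f\chi_{|z|\le R}\in L^1_\omega$ has compact support; indeed the kernel is then bounded and smooth, making it a norm-limit of finite rank operators. Hence $T_f^\omega$ is a norm limit of compact operators, and is therefore compact.
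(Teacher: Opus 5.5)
\textbf{There is a genuine gap in (i), at exactly the step you flag as the main obstacle, and the repair you propose does not close it.}

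A primitive $F$ with $\partial_\rho\partial_t F=-f\omega\rho$ is determined only modulo $a(\rho)+c(t)$. So subtracting a \emph{constant} box average from your global $F$ does not localize it. On a dyadic box at radius $\rho\asymp r_k$ ($r_k=1-2^{-k}$) and angle $t$, the radial increment $F(\rho e^{it})-F(\rho'e^{it})$ is the $f\omega\,\mathrm{d}A$-mass of the thin sector $\{\rho<|\zeta|<\rho',\ 0<\arg\zeta<t\}$, whose angular length is $t$. Condition \eqref{A1} bounds this mass only by a multiple of $t\,\widehat\omega(\rho)$, not by $\widehat\omega(\rho)(1-\rho)$. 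For instance, take $\omega\equiv1$ and $f=(-1)^k$ on $\{r_k\le|z|<r_{k+1}\}$, which trivially satisfies \eqref{A1}. Then $|F(r_ke^{it})-F(r_{k+1}e^{it})|\asymp t\,2^{-k}$, whereas the interior term $\int_R|F-c_R|\,|\partial_\rho\partial_t(g\bar h)|$ needs $|F-c_R|\lesssim\widehat\omega(r_k)(1-r_k)=2^{-2k}$. So the factor $1/(1-|z|)$ is lost again on every box. No ``decay of $\partial_t(g\bar h)$ on neighbouring boxes'' is available to compensate.

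Your fallback fails too. After replacing $g\bar h$ by its angular mean on a box, the Lipschitz error must be paired with $|f|\omega$. Condition \eqref{A1} says nothing about $|f|\omega$, since it only controls signed two-dimensional averages. The angular direction therefore also has to be handled by integrating by parts against a primitive.

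\textbf{What is missing is the box-anchored primitive.} On each box $S_{kj}=\{r_k\le r<r_{k+1},\ \theta_j\le\theta<\theta_{j+1}\}$, with $\theta_j=2\pi j2^{-k}$, integrate by parts with
\[
\Phi_{kj}(r,\theta)=\int_{r_k}^{r}\int_{\theta_j}^{\theta}f(te^{i\eta})\omega(t)t\,\mathrm{d}\eta\,\mathrm{d}t .
\]
This is $F$ minus its traces on the two edges through a corner, not $F$ minus a constant.
\begin{itemize}
\item Every value of $\Phi_{kj}$ is the mass of a sub-rectangle of $S_{kj}$. Such a rectangle is a combination of boundedly many $\tau$-Carleson squares ($\tau\le1$) at levels in $[r_k,r_{k+1}]$. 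Hence $|\Phi_{kj}|\lesssim\epsilon_k\widehat\omega(r_k)(1-r_k)$, where $\epsilon_k=\sup\{|{\bf M}(f,r,\theta,\tau)|:r_k\le r\le r_{k+1}\}$.
\item The interior term is then directly $\lesssim\epsilon_k\int_{S_{kj}}(1-|z|)^2|\partial_r\partial_\theta(g\bar h)|\,\widetilde\omega\,\mathrm{d}A$.
\item The price is a corner term $\Phi_{kj}(r_{k+1},\theta_{j+1})\,(g\bar h)(r_{k+1}e^{i\theta_{j+1}})$ and edge integrals of $\partial_r(g\bar h)$ and $\partial_\theta(g\bar h)$. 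These do not telescope across boxes, and they need not. Since $|g\bar h|=|gh|$, $|g'\bar h|=|g'h|$, \ldots\ are moduli of analytic functions, each such term is bounded by the sub-mean-value estimate of Lemma~\ref{b1} over the enlarged box $\widetilde S_{kj}$.
\end{itemize}
Altogether this gives
\[
\Bigl|\int_{S_{kj}}fg\bar h\,\omega\,\mathrm{d}A\Bigr|\lesssim\epsilon_k\int_{\widetilde S_{kj}}\bigl(|gh|+(1-|z|)(|g'h|+|gh'|)+(1-|z|)^2(|g''h|+|g'h'|+|gh''|)\bigr)\widetilde\omega\,\mathrm{d}A .
\]
The bounded overlap in Lemma~\ref{b2} then lets you sum over boxes before applying H\"older and Littlewood--Paley for $\widetilde\omega$.

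\textbf{Your part (ii) is fine once (i) is in place.} With (i) proved this way, the bound is linear in $\sup_k\epsilon_k$. Your truncation argument is then a valid alternative to the paper's sequential one. A square at level $r<R$ intersected with $\{|z|>R\}$ is a union of about $(1-r)/(1-R)$ squares at level $R$, and $\widehat\omega(R)\le\widehat\omega(r)$. Hence $f\chi_{\{|z|>R\}}$ satisfies \eqref{A1} with constant $\lesssim\sup_{r\ge R,\theta,\tau}|{\bf M}(f,r,\theta,\tau)|\to0$.
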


\noindent If the symbol is non-negative, then the conditions in Theorem \ref{main1} are also necessary.

To deal with the case $p=1$, for $f\in L_{\rm{{loc}}}^1(\mathbb{D},\omega \mathrm{d}A)$ we introduce the log-weighted mean of $f$ defined by
$$\mathscr{M}(f,r,\theta,\tau)=\frac{
\log{\frac{e}{1-r}}}{\widehat{\omega}(r)(1-r)}\int_{S^\tau_{1-r}(e^{i\theta})}f(z)\omega(z)\mathrm{d}A(z).$$

\begin{theorem} \label{main2}
Let $\omega\in\mathcal{D}$ and $f\in L_{\rm{{loc}}}^1(\mathbb{D},\omega \mathrm{d}A)$. Then the following conclusions hold.
\begin{enumerate}
\item [(i)] If
\begin{equation}\label{E0}
\sup\Big\{\big|\mathscr{M}(f,r,\theta,\tau)\big|:0<r<1,\,\,\theta\in[0,2\pi],\,\, 0<\tau\leq1\Big\}<\infty,
\end{equation}
then $T_f^\omega$ is well-defined by \eqref{T_f} and bounded on $A_\omega^1$.

\item [(ii)] If
$$\lim_{r\rightarrow1^-}\sup\Big\{\big|\mathscr{M}(f,r,\theta,\tau)\big|:\theta\in[0,2\pi],\,\,\tau\in(0,1]\Big\}=0,$$
then $T_f^\omega: A_\omega^1 \rightarrow A_\omega^1$ is compact.
\end{enumerate}
\end{theorem}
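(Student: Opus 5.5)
We argue by duality and Fubini, with the symbol condition entering only through a Carleson-type integration by parts. The dual of $A_\omega^1$ under the $A_\omega^2$ pairing is identified, for $\omega\in\mathcal D$, with the Bloch space $\mathcal B$; this follows from the Littlewood--Paley formula and the $L^\infty$--BMO boundedness of $P_\omega$ recalled above from \cite{PR2}. So for $g\in H^\infty$ and a polynomial $h$ we write
\[
\langle T_f^\omega g,h\rangle_{A^2_\omega}=\int_{\mathbb D} f(\zeta)g(\zeta)\overline{h(\zeta)}\,\omega(\zeta)\,\mathrm dA(\zeta).
\]
Boundedness on $A_\omega^1$ then reduces to the bilinear estimate
\[
\Big|\int_{\mathbb D} f\,g\,\bar h\,\omega\,\mathrm dA\Big|\lesssim \|g\|_{A_\omega^1}\,\|h\|_{\mathcal B}.
\]
Well-definedness comes from the same estimate with $h=B_z^\omega$, which is Bloch for each fixed $z$ with norm depending on $z$. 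Absolute convergence for $g\in A^1_\omega$ is obtained by first proving the estimate for dilates $g_\rho$ and passing to the limit.

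\textbf{Transferring the mean condition.} Set $F(re^{i\theta})$ to be the mass $\int f\omega\,\mathrm dA$ over the polar rectangle $\{se^{it}: r<s<1,\ \theta_0\le t\le\theta\}$. Condition \eqref{E0}, applied to rectangles $S^\tau_{1-r}$ with all $\tau\le1$, gives
\[
|F(re^{i\theta})|\lesssim \frac{\widehat\omega(r)(1-r)}{\log\frac{e}{1-r}}
\]
for arcs of angular length at most $2\pi(1-r)$. Longer arcs are split into $O(|\theta-\theta_0|/(1-r))$ pieces, and a fractional $\tau$ handles the leftover piece. We then integrate by parts twice, in $s$ and in $t$, over a Whitney-type decomposition of $\mathbb D$ into top halves of Carleson boxes $R_{n,k}$ of side $2^{-n}$. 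This moves the derivatives onto $\partial_s\partial_t(g\bar h)$, together with boundary terms involving $g\bar h$ and its first derivatives. The result is a bound of the shape
\[
\int_{\mathbb D}\frac{\widehat\omega(z)(1-|z|)}{\log\frac{e}{1-|z|}}\Big(|\nabla^2(g\bar h)(z)|+\frac{|\nabla(g\bar h)(z)|}{1-|z|}+\frac{|g\bar h(z)|}{(1-|z|)^2}\Big)\frac{\mathrm dA(z)}{1-|z|},
\]
to be controlled by $\|g\|_{A^1_\omega}\|h\|_{\mathcal B}$. This follows the Taskinen--Virtanen/Zheng--Yan scheme, adapted using $\omega(S)\asymp\widehat\omega(1-h)h$. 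In practice it is cleaner to fix $R_{n,k}$, replace $g\bar h$ by its values on a slightly enlarged box, and use subharmonicity. Terms without derivatives on $h$ carry the factor $|h(z)|\lesssim\|h\|_{\mathcal B}\log\frac{e}{1-|z|}$, which is exactly absorbed by the logarithm in $\mathscr M$. Terms with derivatives on $h$ carry $|h'(z)|(1-|z|)\lesssim\|h\|_{\mathcal B}$ and need no logarithm. After these cancellations every term is dominated by
\[
\|h\|_{\mathcal B}\int_{\mathbb D}\big(|g|+|g'|(1-|z|)+|g''|(1-|z|)^2\big)\,\frac{\widehat\omega(z)}{1-|z|}\,\mathrm dA(z).
\]
Since $\omega\in\mathcal D$, the Littlewood--Paley formula, combined with $\widehat\omega(z)/(1-|z|)$ being an admissible replacement for $\omega$ (Lemma \ref{Dhat} and Lemma \ref{Dcheck}), bounds this integral by $\|g\|_{A^1_\omega}$.

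\textbf{Compactness.} For (ii) we split $f=f\chi_{|z|\le\rho}+f\chi_{|z|>\rho}$. The first piece gives a compact operator: it has compactly supported $L^1_\omega$ symbol, so $T$ maps bounded sets into locally uniformly bounded families with uniformly controlled kernels, and is thus a norm-limit of finite rank operators. For the second piece, the estimate above shows its operator norm is bounded by $\sup_{r>\rho'}|\mathscr M|$, up to the error coming from boxes straddling $|z|=\rho$, and this tends to $0$.

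\textbf{Main obstacle.} The hard step is the integration by parts. The boundary terms must be arranged so that each is paired with a mean $\mathscr M$ taken over a genuine $\tau$-Carleson square with $\tau\le1$. The logarithm must also land only on the terms where $h$ appears undifferentiated; otherwise the estimate either loses a $\log$ or fails for Bloch functions.
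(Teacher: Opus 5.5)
Your proof is correct, but it absorbs the logarithm differently from the paper.

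\textbf{The paper's route.} The paper does not dualize in the proof of Theorem \ref{main2}. For fixed $z$ it runs the same integration by parts on each $S_{kj}$ with $b(\zeta)=g(\zeta)\overline{B_z^\omega(\zeta)}$, which gives the pointwise majorant \eqref{E2} for $|T_f^\omega g(z)|$. It then takes the $L^1_\omega$-norm in $z$ by Tonelli. The factor $1/\log\frac{e}{1-|\zeta|}$ is cancelled by $\|B_\zeta^\omega\|_{A^1_\omega}\asymp\log\frac{e}{1-|\zeta|}$. The terms with differentiated kernels are handled by the analogous $A^1_\omega$-bounds (Lemma \ref{Ne1} with $p=1$).

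\textbf{Your route.} You pair with $h\in\mathcal B$ through Lemma \ref{Bloch3}. You cancel the logarithm by the growth $|h(\zeta)|\lesssim\|h\|_{\mathcal B}\log\frac{e}{1-|\zeta|}$, and use $|h^{(n)}(\zeta)|(1-|\zeta|)^n\lesssim\|h\|_{\mathcal B}$ for the other terms. This makes the case $p=1$ a direct analogue of the proof of Theorem \ref{main1}(i), with $A^{p'}_\omega$ replaced by $\mathcal B$. It also replaces kernel asymptotics by elementary Bloch estimates.

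\textbf{What each costs.} Your route needs the duality theorem. It also needs a justification of $\langle T_f^\omega g,h\rangle=\int_{\mathbb D} fg\bar h\,\omega\,\mathrm dA$, because for $f\in L^1_{\rm loc}$ the defining integral is only a conditionally convergent sum over the boxes. This is fine for polynomial $h$: the box sum converges locally uniformly in $z$, and polynomials of Bloch norm at most one are norming for $A^1_\omega$ after dilation. The paper's majorant gives well-definedness and the norm bound in one step and avoids these interchanges.

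\textbf{Compactness.} For (ii), approximating in norm by $T^\omega_{f\chi_{\{|z|\le\rho\}}}$ is a clean alternative to the paper's sequential criterion. That operator is compact via the power series of $B_z^\omega$ and Lemma \ref{Dhat}(v). If you take $\rho=1-2^{-m}$, no box straddles $|z|=\rho$, and the boxes below level $m$ contribute nothing. The tail operator then has norm $\lesssim\sup\{|\mathscr M(f,r,\theta,\tau)|: r\ge 1-2^{-m}\}$.

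\textbf{Two small corrections.}
\begin{enumerate}
\item In your intermediate display the measure should be $\mathrm dA(z)$, not $\mathrm dA(z)/(1-|z|)$. As written it would give the weight $\widehat\omega(z)/(1-|z|)^2$, instead of the $\widehat\omega(z)/(1-|z|)$ you correctly use afterwards.
\item The ``obstacle'' about where the logarithm lands is not real. The factor $1/\log\frac{e}{1-|z|}$ supplied by $\mathscr M$ multiplies every term from a given box. On terms where $h$ is differentiated it is simply discarded.
\end{enumerate}
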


We require the weight $\omega$ in Theorem \ref{main1} and \ref{main2} to be in $\mathcal{D}$ because the Littlewood-Paley formula, which is frequently used in the proof, holds in $A_{\omega}^p$ if and only if $\omega\in \mathcal{D}$ \cite[Theorem 5]{PR2}. In addition, we will use the estimate of the $A_\omega^p$-norms of the $k$-th derivative of the reproducing kernel $B_z^\omega$ with $\omega \in\widehat{\mathcal{D}}$ in our arguments.
\vspace{.2cm}

Another problem considered in this paper is to characterize the boundedness of the Toeplitz operators with co-analytic symbols on $A_\omega^1$. To do this, let $\mathcal{LB}$ denote the logarithm-Bloch space consisting of $f\in H(\mathbb{D})$ which satisfies (see \cite{Attele})
$$
\sup_{z\in\mathbb{D}}|f^{\prime}(z)|(1-|z|^2)\log \frac{e}{1-|z|}<\infty.
$$
For $\beta>0$, let $$\widetilde{B}_{\zeta}^{\omega_{[\beta]}}(z)=B_{\zeta}^{\omega_{[\beta]}}(z)(1-|\zeta|)^\beta,\quad{\zeta,z\in\mathbb{D}},$$
where $\omega_{[\beta]}(\zeta)=\omega(\zeta)(1-|\zeta|)^\beta$. The necessary and sufficient conditions for the Toeplitz operator to be bounded on $A_\omega^1$ are provided. For the classical Bergman space, we refer to \cite{ABT, Zhu1}.

\begin{theorem} \label{TLB}
Let $\omega\in\mathcal{D}$ and $f\in A_\omega^1$. Then the following assertions are equivalent:
\begin{enumerate}
\item [(i)] $T_{\bar{f}}^\omega$ is bounded on $A_\omega^1$;
\item [(ii)] $f$ belongs to $H^\infty\cap\mathcal{LB}$;
\item [(iii)] $\sup_{\zeta \in\mathbb{D}}\| T_{\bar{f}}^\omega\widetilde{B}_{\zeta}^{\omega_{[\beta]}}\|_{A_\omega^1}<\infty.$
\end{enumerate}
\end{theorem}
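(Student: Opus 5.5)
The plan is to prove (ii)$\Rightarrow$(i)$\Rightarrow$(iii)$\Rightarrow$(ii). The implication (i)$\Rightarrow$(iii) is immediate once we know $\sup_\zeta\|\widetilde{B}_{\zeta}^{\omega_{[\beta]}}\|_{A^1_\omega}<\infty$ for suitable $\beta>0$. For this we use the $\widehat{\mathcal D}$ estimate of kernel norms,
\[
\|B_\zeta^{\omega_{[\beta]}}\|_{A^1_\omega}\asymp \int_0^{|\zeta|}\frac{\widehat{\omega}(t)}{\widehat{\omega_{[\beta]}}(t)(1-t)}\,dt .
\]
Since $\omega\in\mathcal D$, we have $\widehat{\omega_{[\beta]}}(t)\asymp(1-t)^\beta\widehat\omega(t)$, so the right-hand side is $\asymp(1-|\zeta|)^{-\beta}$ for every $\beta>0$. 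Multiplying by $(1-|\zeta|)^\beta$ gives the uniform bound.

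For (ii)$\Rightarrow$(i), decompose using the reproducing formula. For $g\in H^\infty$ we have $T^\omega_{\bar f}g=P_\omega(\bar f g)$. Write $\bar f(\zeta)=\overline{f(z)}+(\overline{f(\zeta)}-\overline{f(z)})$ inside the integral, so that
\[
T^\omega_{\bar f}g(z)=\overline{f(z)}g(z)-\int_{\mathbb D} g(\zeta)\big(\overline{f(z)}-\overline{f(\zeta)}\big)\overline{B^\omega_z(\zeta)}\,\omega(\zeta)\,dA(\zeta).
\]
The first term is controlled by $\|f\|_\infty\|g\|_{A^1_\omega}$. For the second term, a Littlewood–Paley / derivative approach is cleaner. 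Since $\omega\in\mathcal D$, $\|h\|_{A^1_\omega}\asymp|h(0)|+\int|h'(z)|(1-|z|)\omega(z)\,dA(z)$. We compute $(T^\omega_{\bar f}g)'(z)=\int \bar f g\,\overline{\partial_{\bar z}B_z^\omega}\,\omega\,dA$ and subtract $\overline{f(z)}g'(z)$, which is the same integral with $\overline{f(z)}$ in place of $\bar f$. This leaves
\[
\int g(\zeta)\big(\overline{f(\zeta)}-\overline{f(z)}\big)\overline{\partial B_z^\omega(\zeta)}\,\omega\,dA ,
\]
and we apply Fubini. The key inequalities are $|f(\zeta)-f(z)|\lesssim \|f\|_{\mathcal{LB}}\,\beta(z,\zeta)$ adjusted by log factors, together with $\big|\log\frac{e}{1-|\zeta|}-\log\frac{e}{1-|z|}\big|\lesssim 1+\beta(z,\zeta)$. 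By Fubini we then need
\[
\sup_\zeta \frac{1}{\dots}\int_{\mathbb D}|f(\zeta)-f(z)|\,|\partial B_z^\omega(\zeta)|(1-|z|)\,\omega(z)\,dA(z)<\infty .
\]
To obtain this, split the $z$-integral over dyadic regions and use the $\widehat{\mathcal D}$ estimates $\int|\partial B_z^\omega(\zeta)|\omega(z)(1-|z|)\,dA(z)\lesssim \log\frac{e}{1-|\zeta|}$. The boundedness of $f$ kills the part where $z$ is far from $\zeta$, and the log-Bloch condition handles the logarithmic growth near $\zeta$. Concretely, use $|f(\zeta)-f(z)|\le\min\{2\|f\|_\infty,\ \|f\|_{\mathcal{LB}}\cdot(\text{length term})\}$. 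This integral estimate is the main obstacle. I would first try to reduce it to a one-dimensional radial estimate using the known kernel bound $|\partial B^\omega_z(\zeta)|\lesssim\int_0^{|z\zeta|}\frac{dt}{\widehat\omega(t)(1-t)^3}$ along with angular integration.

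For (iii)$\Rightarrow$(ii), test on the kernels. Since $\widetilde B_\zeta^{\omega_{[\beta]}}$ is analytic, $\langle T^\omega_{\bar f}h, B^\omega_w\rangle=\langle h, fB_w^\omega\rangle$, and pairing against the norm-one $H^\infty$ functionals gives information on $f$. First, take $h=\widetilde B^{\omega_{[\beta]}}_\zeta$ and evaluate $T^\omega_{\bar f}h$ at the point $\zeta$. Point evaluations on $A^1_\omega$ have norm $\asymp 1/\omega(S(\zeta))$, and reproducing against $\omega_{[\beta]}$ gives approximately $T^\omega_{\bar f}h(\zeta)\approx \overline{f(\zeta)}h(\zeta)$ up to a controlled error. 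Since $h(\zeta)\asymp 1/\omega(S(\zeta))$, this yields $\sup|f|<\infty$. Next, take the derivative at $\zeta$ using the $A^1_\omega$ derivative point-evaluation bound $|F'(\zeta)|\lesssim\|F\|_{A^1_\omega}/(\omega(S(\zeta))(1-|\zeta|))$. The difference $(T^\omega_{\bar f}h)'(\zeta)-\overline{f(\zeta)}h'(\zeta)$ involves $\int(\bar f-\overline{f(\zeta)})h\,\overline{\partial B}\,\omega$. Expanding as in the classical proof of Zhu and Attele, this exposes a term $\overline{f'(\zeta)}(1-|\zeta|)\log\frac{e}{1-|\zeta|}$, whose logarithm comes from $\|\partial B_\zeta^\omega\cdot h\|$ type integrals. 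That term is bounded by the hypothesis (iii) plus $\|f\|_\infty$, so $f\in\mathcal{LB}$. The delicate point is showing that the logarithmic factor genuinely appears here, via the $\widehat{\mathcal D}$ estimate $\int_0^{|\zeta|}\frac{dt}{1-t}$.
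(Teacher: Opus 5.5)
Your argument for (i)$\Rightarrow$(iii) is fine: Lemma \ref{Ne1}(ii), combined with $\widehat{\omega_{[\beta]}}(t)\asymp(1-t)^\beta\widehat{\omega}(t)$, gives $\sup_\zeta\|\widetilde{B}_\zeta^{\omega_{[\beta]}}\|_{A^1_\omega}<\infty$. The other two implications have genuine gaps.

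\textbf{(ii)$\Rightarrow$(i).} The estimate you call the main obstacle is false, so the absolute-value/Fubini reduction cannot work. After Fubini you need
\[
K(\zeta)=\int_{\mathbb{D}}|f(\zeta)-f(z)|\,|(B^\omega_\zeta)'(z)|(1-|z|)\,\omega(z)\,\mathrm{d}A(z)
\]
to be bounded in $\zeta$. The kernel mass $\log\frac{e}{1-|\zeta|}$ is spread over $L\asymp\log\frac{1}{1-|\zeta|}$ dyadic layers, each of weight $O(1)$.

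Boundedness of $f$ does not kill the far layers; it only bounds each layer by $O(\|f\|_\infty)$. Integrating the log-Bloch bound from $\zeta$ out to the $k$-th layer gives $|f(z)-f(\zeta)|\lesssim\log\frac{L}{L-k}$. Since $\sum_{k<L}\min\{1,\log\frac{L}{L-k}\}\asymp L$, your own bound still gives $K(\zeta)\lesssim\log\frac{e}{1-|\zeta|}$, which is not $O(1)$.

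The loss is real. Take $\omega\equiv1$ and $f(z)=\sum_{n\ge2}\frac{z^{2^n}}{n\log^2n}\in H^\infty\cap\mathcal{LB}$. On the region $1-|z|\asymp2^{k-N}$, $|\arg z|\lesssim 2^{k-N}$, one has $|f(z)-f(1-2^{-N})|\gtrsim k/(N\log^2N)$ for $C\le k\le N/2$. Hence $K(1-2^{-N})\gtrsim N/\log^2N\to\infty$.

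The proof must use cancellation. The paper does this by duality. Under the $A^2_\omega$-pairing of Lemma \ref{Bloch3}, the adjoint of $T^\omega_{\bar f}$ acts on $\mathcal{B}$ as $g\mapsto P_\omega(fg)=fg$. Then $f\in H^\infty\cap\mathcal{LB}$ is a Bloch multiplier because $|g(z)|\lesssim\|g\|_{\mathcal B}\log\frac{e}{1-|z|}$.

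\textbf{(iii)$\Rightarrow$(ii).} Evaluating $T^\omega_{\bar f}h$ or its derivative at the single point $\zeta$ cannot deliver what you claim.

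First, the ``controlled error'' in $T^\omega_{\bar f}h(\zeta)\approx\overline{f(\zeta)}h(\zeta)$ is controlled only if you already know $f\in\mathcal B$. In the model case $\omega\equiv1$ with the analogous test function $h(u)=(1-|\zeta|^2)(1-\bar\zeta u)^{-3}$, one computes exactly
\[
T_{\bar f}h(\zeta)=h(\zeta)\,\overline{\Big(f(\zeta)+\tfrac12(1-|\zeta|^2)\zeta f'(\zeta)\Big)},
\]
so the point-evaluation bound controls only this mixture.

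Second, such evaluations are local. They are combinations of $f(\zeta)$, $(1-|\zeta|)f'(\zeta)$ and $(1-|\zeta|)^2f''(\zeta)$ with bounded coefficients, so no factor $\log\frac{e}{1-|\zeta|}$ can emerge. The logarithm has to come from a global object.

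The paper does not attempt (iii)$\Rightarrow$(ii) directly. It obtains (iii)$\Rightarrow$(i) from Lemma \ref{Ab}, case (b), with $\mathrm{d}\mu=\bar f\omega\,\mathrm{d}A$. It then proves (i)$\Rightarrow$(ii) by pairing, via Lemma \ref{Bloch3}, against the Bloch functions $h_z(\zeta)=\log\frac{e}{1-\bar z\zeta}$:
\begin{itemize}
\item The test function $B^\omega_z$, of norm $\asymp\log\frac{e}{1-|z|}$, gives $|f(z)|\log\frac{e}{1-|z|^2}\lesssim\log\frac{e}{1-|z|}$, hence $f\in H^\infty$.
\item The test function $\zeta(B^\omega_z)'(\zeta)$, of norm $\lesssim(1-|z|)^{-1}$, together with the reproducing formula for $F_z=f\,h_z$, gives $(1-|z|)|f'(z)|\log\frac{e}{1-|z|}\lesssim1+|f(z)|$.
\end{itemize}
In the second step the logarithm comes from the value $h_z(z)$.
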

\noindent The main tool to prove Theorem \ref{TLB} is the duality relation $(A_\omega^1)^*\simeq \mathcal{B}$ via the $A_\omega^2$-pairing for $\omega\in\mathcal{D}$ developed in \cite{PR2}, where $\mathcal{B}$ is the Bloch space. And the method used here does not work if the weight $\omega$ is only assumed to belong to $\widehat{\mathcal{D}}$.
\vspace{.2cm}

Besides the study of the Toeplitz operator on $A_{\omega}^1$, we also investigate the boundedness of the Hankel operator $H_f^{\omega}$ from $A_\omega^1$ to $L_\omega^1$ which is densely defined by
$$
H_f^\omega(g)(z)=(I-P_\omega)(fg)(z),\quad g\in H^{\infty},\quad z\in\mathbb{D},
$$
where the symbol $f$ belongs to $L_{\omega}^2$. Earlier results related to the Hankel operators in $A_{\omega}^p$ with $p>1$ include \cite{Axler, BBCZ, BCZ, HL, Luecking1, LZ, PZZ, PPR, Zhu, Zhu2}. In this context, we focus on the boundedness of the Hankel operators from $A_\omega^1$ to $L_\omega^1$. For the classical case, Attele \cite{Attele} provided a sufficient and necessary condition for the boundedness of the Hankel operator from $A^1$ to $L^1$.
In \cite{TV2}, Taskinen and Virtanen showed that Hankel operator with the symbol in $\mathrm{BO}_{\log} \cap L^{\infty}+\mathrm{BA}^1_{\log}$ (but not $\mathrm{BMO}^1_{\log}$) is bounded from $A^1$ to $L^1$.
The boundedness of the Hankel operator with co-analytic symbols from $A_\omega^1$ to $L_\omega^1$ is characterized by the following theorem.

\begin{theorem}\label{HLB}
Let $\omega \in \mathcal{D}$. For $f\in A_\omega^2$, the Hankel operator $H_{\bar{f}}^\omega$ is bounded from $A_\omega^1$ to $L_\omega^1$ if and only if
$f\in \mathcal{LB}$.
\end{theorem}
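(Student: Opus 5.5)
The plan is to reduce Theorem \ref{HLB} to Theorem \ref{TLB} through the identity
$$H_{\bar f}^\omega g=\bar f g-P_\omega(\bar f g)=\bar f g-T_{\bar f}^\omega g,\qquad g\in H^\infty,$$
and to handle the multiplication term $\bar f g$ with Carleson-measure and Littlewood--Paley estimates in $A_\omega^1$.

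\emph{Sufficiency.} Assume $f\in\mathcal{LB}$. Then $T_{\bar f}^\omega$ is not directly available, because $f$ need not be bounded. Instead I would work with $\bar f(\zeta)-\bar f(z)$. For $g\in H^\infty$, the reproducing property gives
$$H_{\bar f}^\omega g(z)=\int_{\mathbb D}\big(\overline{f(z)}-\overline{f(\zeta)}\big)g(\zeta)\overline{B_z^\omega(\zeta)}\,\omega(\zeta)\,\mathrm{d}A(\zeta).$$
Since $\overline{f(z)}-\overline{f(\zeta)}$ is conjugate analytic in $z$, there is a cleaner route. Choose $\beta$ large and use the reproducing formula for $A^2_{\omega_{[\beta]}}$ to write $g=P_{\omega_{[\beta]}}g$. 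One can then represent $H^\omega_{\bar f}g=(I-P_\omega)\big(\bar f g-\overline{\Phi}\big)$, where $\Phi$ is analytic and $\overline\Phi$ is annihilated modulo constants. Concretely, the aim is the standard estimate
$$\|H_{\bar f}^\omega g\|_{L^1_\omega}\lesssim \int_{\mathbb D}|g(\zeta)|\,|f'(\zeta)|(1-|\zeta|)\,\omega(\zeta)\,\mathrm{d}A(\zeta)\cdot(\text{log factor}).$$
This would follow from an integration-by-parts (Stokes) formula $\bar f g-P_\omega(\bar f g)=\int \overline{f'(\zeta)}\,g(\zeta)\,\overline{R_z(\zeta)}\,\ldots$ with a kernel $R$ satisfying $|R|\lesssim |B_z^\omega|(1-|\zeta|)$, followed by Fubini and the kernel estimate $\int_{\mathbb D}|B_\zeta^\omega(z)|\,\omega(z)\,\mathrm{d}A(z)\asymp \log\frac{e}{1-|\zeta|}$, valid for $\omega\in\mathcal D$. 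Combining this with $|f'(\zeta)|(1-|\zeta|)\log\frac{e}{1-|\zeta|}\lesssim1$ yields $\|H_{\bar f}^\omega g\|_{L^1_\omega}\lesssim\|g\|_{A^1_\omega}$.

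\emph{Necessity.} Test the operator on normalized kernels $g_a=\widetilde B_a^{\omega_{[\beta]}}/\|\widetilde B_a^{\omega_{[\beta]}}\|_{A^1_\omega}$, which have norm comparable to one for large $\beta$. Then
$$H_{\bar f}^\omega g_a=\big(\overline{f}-\overline{f(a)}\big)g_a-P_\omega\big((\overline f-\overline{f(a)})g_a\big).$$
Using the duality $(A^1_\omega)^*\simeq\mathcal B$, pair $H_{\bar f}^\omega g_a$ against a bounded conjugate-analytic test function localized near $a$, for instance $\overline{\varphi_a}$. This should give
$$|f'(a)|(1-|a|^2)\log\frac{e}{1-|a|}\lesssim \|H_{\bar f}^\omega\|.$$
The logarithm arises because $\|P_\omega\|$-type quantities on $L^\infty$, equivalently the $A^1_\omega$-norm of $B_a^\omega$, grow like $\log\frac{e}{1-|a|}$. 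An alternative is to compare with Theorem \ref{TLB}: $H_{\bar f}^\omega$ is bounded if and only if $T_{\bar f}^\omega-M_{\bar f}$ is bounded, and the $H^\infty$ part should cancel in the difference.

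The main obstacle is the necessity lower bound, namely extracting the exact factor $\log\frac{e}{1-|a|}$ without assuming $f\in H^\infty$. My first attempt would choose test functions $g_a$ with $\log$-growth, such as $g_a=\widetilde B^{\omega_{[\beta]}}_a\cdot\log\frac{e}{1-\bar a z}$ normalized in $A^1_\omega$. I would then estimate $\int_{D(a,r)}|H_{\bar f}^\omega g_a|\,\omega$ from below by the local oscillation of $\bar f$ on $D(a,r)$, using that $P_\omega(\ldots)$ is analytic and so cannot cancel the conjugate-analytic part locally.
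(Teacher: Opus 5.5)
\textbf{Sufficiency.} The reduction to Theorem~\ref{TLB} does not work. Since $\mathcal{LB}\not\subset H^\infty$ (e.g.\ $\log\log\frac{e}{1-z}\in\mathcal{LB}$), both $M_{\bar f}$ and $T^\omega_{\bar f}$ are in general unbounded on $A^1_\omega$, so the identity $H^\omega_{\bar f}=M_{\bar f}-T^\omega_{\bar f}$ gives nothing. Your argument therefore rests entirely on the asserted formula $H^\omega_{\bar f}g(z)=\int_{\mathbb D}\overline{f'(\zeta)}g(\zeta)\overline{R_z(\zeta)}\cdots$ with $|R_z(\zeta)|\lesssim|B^\omega_z(\zeta)|(1-|\zeta|)$. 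You do not construct this formula, and the bound cannot hold pointwise. Indeed, $\bar\partial_z H^\omega_{\bar f}g=\overline{f'}g$ for all polynomials $f,g$, and the products $\overline{f'}g$ span all polynomials in $z,\bar z$. So the kernel must be a fundamental solution of $\bar\partial_z$ with pole at $\zeta$, of size $|z-\zeta|^{-1}$ on the diagonal, whereas $|B^\omega_z(\zeta)|(1-|\zeta|)$ stays bounded as $z\to\zeta$. What you actually need is an integrated estimate for the canonical solution $u$ of $\bar\partial u=\overline{f'}g$ in $L^2_\omega$, namely $\|u\|_{L^1_\omega}\lesssim\int_{\mathbb D}|f'||g|(1-|\zeta|)\log\frac{e}{1-|\zeta|}\,\omega\,dA$. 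For a general $\omega\in\mathcal D$ there is no explicit solution kernel from which to read this off, and this is the whole content of the sufficiency. The paper avoids $\bar\partial$ by duality. For $h\in L^\infty$ it writes $\langle H^\omega_{\bar f}g,h\rangle_{L^2_\omega}=\langle g,S^\omega_fh\rangle_{L^2_\omega}$, where $S^\omega_fh=P_\omega(fh)-fP_\omega h$ is analytic. In $(1-|z|^2)(S^\omega_fh)'(z)$ there are two terms:
\begin{itemize}
\item the term $f'(z)(1-|z|^2)\int\overline{B^\omega_z}h\,\omega\,dA$ is bounded using exactly your two ingredients, $\mathcal{LB}$ and $\|B^\omega_z\|_{A^1_\omega}\asymp\log\frac{e}{1-|z|}$;
\item the term $(1-|z|^2)\int(f(\zeta)-f(z))(B^\omega_\zeta)'(z)h\,\omega\,dA$ is handled by $|f(\zeta)-f(z)|\lesssim\|f\|_{\mathcal B}\beta(z,\zeta)$ and kernel estimates for $\omega_{[-\gamma]}\in\mathcal D$.
\end{itemize}
Hence $S^\omega_f:L^\infty\to\mathcal B$ is bounded, and $(A^1_\omega)^*\simeq\mathcal B$ concludes.

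\textbf{Necessity.} None of your proposed tests can produce the logarithm. Each amounts to pairing $\bar f$ with a function of $A^1_\omega$-norm $O(1)$ living at a single scale, and by $(A^1_\omega)^*\simeq\mathcal B$ such pairings are bounded for \emph{every} Bloch function, so they cannot separate $\mathcal B$ from $\mathcal{LB}$. Concretely:
\begin{itemize}
\item since $P_\omega(\overline{\varphi_a})=\bar a$, you get $\int H^\omega_{\bar f}(g_a)\varphi_a\,\omega\,dA=-(1-|a|^2)\int\bar f\,\frac{\zeta g_a(\zeta)}{1-\bar a\zeta}\,\omega\,dA$, which is $\lesssim\|f\|_{\mathcal B}$;
\item a lower bound for $\int_{D(a,r)}|H^\omega_{\bar f}g_a|\,\omega\,dA$ via the oscillation of $\bar f$ on $D(a,r)$ only captures $(1-|a|)|f'(a)|$;
\item multiplying $g_a$ by $\log\frac{e}{1-\bar az}$ and renormalizing leaves it essentially unchanged near $a$.
\end{itemize}
The missing idea is the paper's unbounded test function $\log(1-\bar z\zeta)$, whose imaginary part is bounded. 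Since $H^\omega_{\bar f}g\perp A^2_\omega$, one has $\int H^\omega_{\bar f}(g)\log(1-\bar z\zeta)\,\omega\,dA=2i\int H^\omega_{\bar f}(g)\,\mathrm{Im}\log(1-\bar z\zeta)\,\omega\,dA=O(\|g\|_{A^1_\omega})$. Because this test function vanishes at $0$, the left side equals $\int\bar fg\log(1-\bar z\zeta)\,\omega\,dA$. Now take $g=\zeta(B^\omega_z)'$, whose norm is $\lesssim(1-|z|)^{-1}$, and split $\log\frac1{1-|z|}=\log\frac1{1-\bar\zeta z}+\log\frac{1-\bar\zeta z}{1-|z|}$. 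The second piece is controlled using $f\in\mathcal B$ (obtained first by testing with $h_1(\zeta)=\zeta$) together with $L^2_\omega$-estimates of $(1-\bar\zeta z)(B^\omega_\zeta)'(z)$. The reproducing formula then gives $(1-|z|)|f'(z)|\log\frac1{1-|z|}\lesssim1$.
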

Interestingly, the sufficient and necessary condition in Theorem \ref{HLB} is independent of the weight $\omega$. This is in sharp contrast to the characterization for the boundedness of $H_f$ and $ H_{\bar{f}}$ from $A_\omega^p$ to $L_\omega^q$ for $1< p,q<\infty$ in \cite{HL, PPR}, where the condition relies on the weight. The proof of the necessity in Theorem \ref{HLB} is different from the case of the classical Bergman space. In \cite{Attele}, a useful identity with the Begman projection $P$ is
$$ P\Big((1-|w|^2)(\bar{w})^{-1}g^{\prime}(w)\Big)(z)=g(z),\quad{z\in\mathbb{D}},$$
for all $g\in A^2$ satisfying $g(0)=g^{\prime}(0)=0$. For the radial weight $\omega\in\mathcal{D}$, in addition to the standard arguments, an extra technique is to estimate the $L^2_\omega$-norms of $|(1-\bar{\zeta}z)^{\varepsilon}(B_{\zeta}^\omega)^{\prime}(z)|$ for $z\in\mathbb{D}$ where $\varepsilon=\varepsilon(\omega) $ is chosen sufficiently small. The proof of the sufficiency in Theorem \ref{HLB} is technically demanding and elaborate. Instead of dealing with the Hankel operator $H_f^\omega$ directly, we make use of another integral operator $S_f^\omega$ defined by
$$
S_f^\omega(h)(z)=\int_{\mathbb{D}}\Big(f(\zeta)-f(z)\Big)\overline{B_z^\omega(\zeta)}h(\zeta)\omega(\zeta)\mathrm{d}A(\zeta),\quad z\in \mathbb{D},\,\,h\in L^\infty.
$$
Moreover, we will show that $S_f^\omega$ is bounded from $L^\infty$ to $\mathcal{B}$ and
$$\|H_{\bar{f}}\|_{A_\omega^1\rightarrow L_\omega^1}\lesssim \|S_f^\omega\|_{L^\infty\rightarrow \mathcal{B}}.$$

\noindent {\bf Notation.} We denote by $a\lesssim b$ if there exists a positive constant $C=C(\cdot)$ such that $a\leq Cb$, here the constant $C(\cdot)$ depends on the parameters indicated in the parenthesis, varying under different circumstances. And we also denote by $a\asymp b$ if both $b\lesssim a$ and $b\gtrsim a$
hold.

\section{Some preliminary results about the weighted Bergman spaces}
In this section, we collect some necessary results about the weights and the weighted Bergman spaces that will be frequently used in the sequel. Throughout this paper, we will assume $\widehat{\omega}(r)=\int_{r}^1\omega(s)\mathrm{d}s>0$ for all $0\leq r<1$.

The first result provides several basic properties of weights in the class $\widehat{\mathcal{D}},$ see \cite[Lemma 2.1]{Pe}. Recall that for a radial weight $\omega$ we write $\omega_{[\beta]}(\zeta)=\omega(\zeta)(1-|\zeta|)^\beta$ ($\beta\in\mathbb R$) and
 $$\omega_x=\int_{0}^{1} t^{x} \omega(t)\mathrm{d}t, \quad 0< x<\infty.$$

\begin{letterlemma} \label{Dhat}
Let $\omega$ be a radial weight. Then the following statements are equivalent:
	\begin{itemize}
		\item[(i)] $\omega\in \widehat{\mathcal{D}}$;
		\item[(ii)] There exist constants $C=C(\omega)\geq1$ and $\beta=\beta(\omega)>0$ such that
		$$\widehat{\omega}(r)\leq C\left(\frac{1-r}{1-t}\right)^\beta\widehat{\omega}(t),
		\quad{0\leq r\leq t<1};$$
		\item[(iii)] There exist $\lambda=\lambda(\omega)\geq 0$ such that
		$$
		\int_{\mathbb{D}}\frac{\omega(z)\mathrm{d}A(z)}{|1-\bar{\zeta}z|^{\lambda+1}}\asymp
\frac{\widehat{\omega}(\zeta)}{(1-|\zeta|)^\lambda},\quad{\zeta\in\mathbb{D}};
		$$
		\item[(iv)] For each $\beta>0$, there exists a constant $C=C(\omega,\beta)>0$ such that
		$$
		x^{\beta}(\omega_{[\beta]})_x \leq C \omega_x,\quad 0\leq x<\infty;
		$$
        \item[(v)] There exist $C=C(\omega)>0$ and $\eta=\eta(\omega)>0$ such that
        $$
        \omega_x\leq C\bigg(\frac{y}{x}\bigg)^\eta\omega_y,\quad{0<x\leq y<\infty}.
        $$
	\end{itemize}
\end{letterlemma}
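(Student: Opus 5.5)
We will prove the cycle (i)$\Rightarrow$(ii)$\Rightarrow$(iii)$\Rightarrow$(iv)$\Rightarrow$(v)$\Rightarrow$(i). Throughout we use the trivial bound $\widehat\omega(r)\ge \widehat\omega(t)$ for $r\le t$, and the observation that the doubling condition can be iterated along the sequence $r_n=1-2^{-n}(1-r)$, for which $r_{n+1}=(1+r_n)/2$.

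\emph{(i)$\Rightarrow$(ii).} Fix $0\le r\le t<1$ and choose $n\ge 0$ with $1-r_{n+1}<1-t\le 1-r_n$, so that $t\ge r_n$ and $2^{n}\le 2(1-r)/(1-t)$. Iterating the doubling inequality gives
$$\widehat\omega(r)\le C^n\widehat\omega(r_n)\le C^n\widehat\omega(t).$$
Moreover $C^n=(2^n)^{\log_2 C}\le \bigl(2(1-r)/(1-t)\bigr)^{\beta}$ with $\beta=\log_2 C$; if $C=1$, take any $\beta>0$. This is (ii).

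\emph{(ii)$\Rightarrow$(iii).} We estimate the kernel integral by splitting $\mathbb D$ into the disc $|z|\le|\zeta|$ and dyadic annuli near the boundary. We use the standard bound
$$\int_0^{2\pi}\frac{d\theta}{|1-\bar\zeta se^{i\theta}|^{\lambda+1}}\asymp (1-s|\zeta|)^{-\lambda}\qquad (\lambda>0),$$
which reduces the integral to the radial integral $\int_0^1 \omega(s)(1-s|\zeta|)^{-\lambda}\,ds$.
\begin{itemize}
\item On $s\ge|\zeta|$ we have $1-s|\zeta|\asymp 1-|\zeta|$, so this part is $\asymp \widehat\omega(\zeta)(1-|\zeta|)^{-\lambda}$. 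This already yields the lower bound.
\item On $s<|\zeta|$, integrate by parts to get a term $\lesssim \widehat\omega(\zeta)(1-|\zeta|)^{-\lambda}$ plus $\lambda\int_0^{|\zeta|}\widehat\omega(s)(1-s)^{-\lambda-1}ds$. By (ii), $\widehat\omega(s)\le C\bigl((1-s)/(1-|\zeta|)\bigr)^{\beta}\widehat\omega(\zeta)$. Choosing $\lambda>\beta$, the remaining integral converges and is $\lesssim \widehat\omega(\zeta)(1-|\zeta|)^{-\lambda}$.
\end{itemize}

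\emph{(iii)$\Rightarrow$(i).} Apply (iii) at $|\zeta|=r$ and restrict the integral to $|z|\ge (1+r)/2$, where $|1-\bar\zeta z|\gtrsim 1-r$. This shows the contribution of that region is $\lesssim \widehat\omega((1+r)/2)(1-r)^{-\lambda-1}$ — which is the wrong direction for what we need. We argue instead by contradiction. If doubling fails, there are $r$ with $\widehat\omega(r)/\widehat\omega((1+r)/2)$ arbitrarily large. Pick $\zeta$ with $|\zeta|=(1+r)/2$ and compute the left side of (iii). The mass $\int_r^{(1+r)/2}\omega$, integrated against a kernel of size $\asymp(1-r)^{-\lambda-1}$ over the angular window of width $\sim 1-r$, contributes $\gtrsim (\widehat\omega(r)-\widehat\omega(\frac{1+r}2))(1-r)^{-\lambda}$. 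Comparing with the right side $\widehat\omega(\frac{1+r}2)(1-r)^{-\lambda}$ gives a contradiction.

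\emph{(ii)$\Leftrightarrow$(iv)$\Leftrightarrow$(v).} These are moment translations, via the identity
$$\omega_x=x\int_0^1 t^{x-1}\widehat\omega(t)\,dt.$$
\begin{itemize}
\item The key estimate, derived from (ii), is $\omega_x\asymp\widehat\omega(1-1/x)$ for $x\ge1$. The lower bound is elementary: $\omega_x\ge (1-1/x)^x\,\widehat\omega(1-1/x)$. The upper bound follows by splitting at $1-1/x$ and using (ii) for $t<1-1/x$.
\item (v) then follows directly from (ii) with $\eta=\beta$.
\item (iv) follows because $(\omega_{[\beta]})_x\lesssim x^{-\beta}\widehat\omega(1-1/x)$, using that $\widehat{\omega_{[\beta]}}(t)\le(1-t)^\beta\widehat\omega(t)$.
\item Conversely, (iv) or (v) implies doubling. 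Taking $x\asymp 1/(1-r)$, the moments control $\widehat\omega(r)$ from both sides up to constants. For (iv), one uses $(\omega_{[\beta]})_x\gtrsim x^{-\beta}\bigl(\widehat\omega(r)-\widehat\omega(\frac{1+r}2)\bigr)$. For (v), one compares $x$ and $2x$.
\end{itemize}

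\emph{Main obstacle.} The converse directions back to (i) are the delicate part, (iii)$\Rightarrow$(i) in particular. There the lower kernel estimate must be localized carefully to isolate the annulus $r<|z|<(1+r)/2$, and the constants must be tracked so that a large ratio $\widehat\omega(r)/\widehat\omega(\frac{1+r}{2})$ genuinely contradicts the two-sided bound.
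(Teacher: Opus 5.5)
The genuine gap is in the converse directions (iv)$\Rightarrow$(i) and (v)$\Rightarrow$(i). There you assert that for $x\asymp 1/(1-r)$ ``the moments control $\widehat\omega(r)$ from both sides''. But your two-sided estimate $\omega_x\asymp\widehat\omega(1-1/x)$ used (ii) for its upper half, so it is not available here. Only the lower half, $\omega_x\ge(1-1/x)^x\widehat\omega(1-1/x)$, holds for every radial weight. The upper half $\omega_x\lesssim\widehat\omega(1-1/x)$ is by itself equivalent to $\omega\in\widehat{\mathcal D}$. It fails, e.g., for $\omega(s)=e^{-1/(1-s)}$, where $\omega_x$ decays like $e^{-2\sqrt{x}}$ and $\widehat\omega(1-1/x)$ like $e^{-x}$, up to polynomial factors. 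Without it neither of your sketches closes. From (v) you get $\widehat\omega(1-1/x)\lesssim\omega_x\lesssim\omega_{2x}$, but you have no way to bound $\omega_{2x}$ by $\widehat\omega(1-\frac{1}{2x})$. From (iv) you get only $\widehat\omega(r)-\widehat\omega(\frac{1+r}{2})\lesssim\omega_x$, and you again need $\omega_x\lesssim\widehat\omega(\frac{1+r}{2})$. So the step you flagged as the main obstacle, (iii)$\Rightarrow$(i), is actually a direct localization (no contradiction needed), and the real content sits in these converse moment implications. The paper itself gives no proof; it cites the lemma from the literature.

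The missing idea is an absorption argument that extracts the upper moment bound from the hypothesis.

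\emph{For (v):} splitting $\omega_y$ at $r$ gives $\omega_y\le r^{y-x}\omega_x+\widehat\omega(r)$ for $y\ge x$, hence $\omega_x\le C(y/x)^\eta\bigl(r^{y-x}\omega_x+\widehat\omega(r)\bigr)$. Take $r=1-1/x$ and $y=Kx$ with $CK^\eta e^{1-K}\le\frac12$. This absorbs the first term and yields $\omega_x\le 2CK^\eta\widehat\omega(1-1/x)$; only after this does comparing $x$ with $2x$ give doubling.

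\emph{For (iv):} since $(1-t)^\beta\ge(K/x)^\beta$ on $[0,1-K/x]$, one has $\int_0^{1-K/x}t^x\omega(t)\,dt\le(x/K)^\beta(\omega_{[\beta]})_x\le CK^{-\beta}\omega_x$. So $\omega_x\le 2\widehat\omega(1-K/x)$ once $CK^{-\beta}\le\frac12$. Now take $x=2K/(1-r)$ and use $\omega_x\ge r^x\widehat\omega(r)\ge e^{-4K}\widehat\omega(r)$ for $r\ge\frac12$. This gives $\widehat\omega(r)\le 2e^{4K}\widehat\omega(\frac{1+r}{2})$.

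Two smaller slips should also be fixed. In (i)$\Rightarrow$(ii), $t\ge r_n$ gives $\widehat\omega(t)\le\widehat\omega(r_n)$, not the reverse. You must iterate once more and use $r_{n+1}>t$, i.e.\ $\widehat\omega(r)\le C^{n+1}\widehat\omega(r_{n+1})\le C^{n+1}\widehat\omega(t)$ with $2^{n+1}\le 2(1-r)/(1-t)$. Also, $(1-1/x)^x$ vanishes at $x=1$, so the lower moment bound must be restricted to $x\ge 2$, with small $x$ (and small $r$) handled trivially.
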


Similarly, the weights in $\widecheck{\mathcal{D}}$ enjoy the following properties, see \cite[Lemma B]{PPR}.

\begin{letterlemma}\label{Dcheck}
Let $\omega$ be a radial weight. Then $\omega \in \widecheck{\mathcal{D}}$ if and only if there exist $C=C(\omega)>0$ and $\alpha=\alpha(\omega)>0$ such
that
	$$
	\widehat{\omega}(t) \leqslant C\left(\frac{1-t}{1-r}\right)^\alpha \widehat{\omega}(r), \quad 0 \leqslant r \leqslant t<1.
	$$
\end{letterlemma}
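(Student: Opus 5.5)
The proof is a direct iteration argument that uses only the monotonicity of $\widehat{\omega}$ (it is nonincreasing, since $\omega>0$). I will treat the two implications separately.

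\emph{Sufficiency.} Assume $\widehat{\omega}(t)\le C\left(\frac{1-t}{1-r}\right)^\alpha\widehat{\omega}(r)$ for $0\le r\le t<1$. Fix $K>1$ so large that $CK^{-\alpha}\le \tfrac12$. Given $r$, put $t=1-\frac{1-r}{K}\ge r$, so that $\frac{1-t}{1-r}=K^{-1}$. The hypothesis then gives $\widehat{\omega}\big(1-\frac{1-r}{K}\big)\le CK^{-\alpha}\widehat{\omega}(r)\le \tfrac12\widehat{\omega}(r)$. Hence $\widehat{\omega}(r)\ge 2\,\widehat{\omega}\big(1-\frac{1-r}{K}\big)$, which is the defining condition of $\widecheck{\mathcal{D}}$ with constant $2>1$.

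\emph{Necessity.} Let $\omega\in\widecheck{\mathcal{D}}$ with constants $K>1$ and $C>1$. Fix $r$ and define $r_n=1-\frac{1-r}{K^n}$ for $n\ge0$. Applying the defining inequality at $r_{n-1}$, and noting that $1-\frac{1-r_{n-1}}{K}=r_n$, gives $\widehat{\omega}(r_n)\le C^{-1}\widehat{\omega}(r_{n-1})$. By induction, $\widehat{\omega}(r_n)\le C^{-n}\widehat{\omega}(r)$.

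Now take $t\in[r,1)$ and choose the integer $n\ge0$ with $r_n\le t<r_{n+1}$. Equivalently, $K^{n}\le\frac{1-r}{1-t}<K^{n+1}$. Since $\widehat{\omega}$ is nonincreasing, $\widehat{\omega}(t)\le\widehat{\omega}(r_n)\le C^{-n}\widehat{\omega}(r)$.

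Set $\alpha=\frac{\log C}{\log K}>0$, so that $C^{-n}=K^{-n\alpha}$. Then
$$C^{-n}=K^{\alpha}\big(K^{-(n+1)}\big)^{\alpha}\le K^{\alpha}\Big(\frac{1-t}{1-r}\Big)^{\alpha}.$$
Therefore $\widehat{\omega}(t)\le K^\alpha\left(\frac{1-t}{1-r}\right)^\alpha\widehat{\omega}(r)$, which is the claimed inequality with constant $K^\alpha$.

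There is no real obstacle here. The only points needing care are the bookkeeping when choosing $n$ and the use of monotonicity of $\widehat{\omega}$ to interpolate between the dyadic-type points $r_n$.
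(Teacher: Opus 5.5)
Your proof is correct in both directions. Choosing $K$ with $CK^{-\alpha}\le\frac12$ gives the $\widecheck{\mathcal{D}}$ condition with constant $2$, and iterating along $r_n=1-(1-r)K^{-n}$ together with the monotonicity of $\widehat{\omega}$ gives the power estimate with $\alpha=\log C/\log K$ and constant $K^{\alpha}$. The paper does not prove this lemma itself; it quotes it from \cite[Lemma B]{PPR}, and your iteration argument is the standard proof of such statements.
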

The following estimation about the weights is also needed, which is from \cite{PR1}.
\begin{letterlemma}\label{Dweight1}
	Let $\omega \in \mathcal{D}$ and $\nu \in \widehat{\mathcal{D}}$. Then there exists $\gamma_0=\gamma_0(\omega, \nu)>0$ such that for each $\gamma
\in\left(0, \gamma_0\right]$, we have $(\widehat{\nu})^{-\gamma} \omega \in \mathcal{D}$, and
	$$
	\int_r^1 \frac{\omega(s)}{\widehat{\nu}(s)^\gamma} \mathrm{d} s \asymp \frac{\widehat{\omega}(r)}{\widehat{\nu}(r)^\gamma}, \quad 0 \leq r<1.
	$$
\end{letterlemma}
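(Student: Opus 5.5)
We need to prove Lemma \ref{Dweight1}: for $\omega\in\mathcal D$ and $\nu\in\widehat{\mathcal D}$ there is $\gamma_0>0$ such that for $0<\gamma\le\gamma_0$ the weight $(\widehat\nu)^{-\gamma}\omega$ lies in $\mathcal D$ and
\[
\int_r^1 \frac{\omega(s)}{\widehat\nu(s)^\gamma}\,ds\asymp \frac{\widehat\omega(r)}{\widehat\nu(r)^\gamma}.
\]
The plan is to prove the two-sided estimate first; membership in $\mathcal D$ then follows quickly from Lemma \ref{Dhat}(ii) and Lemma \ref{Dcheck}.

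\emph{The lower bound.} It is immediate and needs no restriction on $\gamma$. Since $\widehat\nu$ is nonincreasing, $\widehat\nu(s)^{-\gamma}\ge\widehat\nu(r)^{-\gamma}$ for $s\ge r$. Integrating against $\omega$ gives $\int_r^1\omega\,\widehat\nu^{-\gamma}\ge \widehat\omega(r)\widehat\nu(r)^{-\gamma}$.

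\emph{The upper bound.} This is the main step. Split $[r,1)$ dyadically as $r_n=1-2^{-n}(1-r)$, $n\ge0$. On $[r_n,r_{n+1})$ we have $\widehat\nu(s)\ge\widehat\nu(r_{n+1})$. By Lemma \ref{Dhat}(ii) for $\nu$ with exponent $\beta=\beta(\nu)$,
\[
\widehat\nu(r_{n+1})\ge C^{-1}2^{-(n+1)\beta}\,\widehat\nu(r).
\]
Hence the piece over $[r_n,r_{n+1})$ is at most
\[
C^\gamma 2^{(n+1)\beta\gamma}\,\widehat\nu(r)^{-\gamma}\,\widehat\omega(r_n).
\]
Now use $\omega\in\widecheck{\mathcal D}$ through Lemma \ref{Dcheck}: $\widehat\omega(r_n)\le C2^{-n\alpha}\widehat\omega(r)$ with $\alpha=\alpha(\omega)$. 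Summing over $n$ gives the series $\sum_n 2^{n(\beta\gamma-\alpha)}$, which converges once $\gamma<\alpha/\beta$. So take $\gamma_0=\alpha/(2\beta)$; the resulting constant is uniform in $\gamma\in(0,\gamma_0]$. This is where the reverse doubling of $\omega$ is essential, and choosing $\gamma_0$ here is the key point.

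\emph{Membership in $\mathcal D$.} Put $W=\omega\widehat\nu^{-\gamma}$. By the estimate just proved, $\widehat W\asymp\widehat\omega\,\widehat\nu^{-\gamma}$.

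For $W\in\widehat{\mathcal D}$, apply Lemma \ref{Dhat}(ii) to $\omega$ and use that $\widehat\nu$ is decreasing. For $r\le t$ this gives
\[
\widehat W(r)\lesssim \widehat\omega(r)\widehat\nu(r)^{-\gamma}\le \widehat\omega(r)\widehat\nu(t)^{-\gamma}\lesssim\Big(\frac{1-r}{1-t}\Big)^{\beta(\omega)}\widehat W(t).
\]
For $W\in\widecheck{\mathcal D}$, use Lemma \ref{Dcheck} for $\omega$ together with Lemma \ref{Dhat}(ii) for $\nu$, which gives $\widehat\nu(r)/\widehat\nu(t)\lesssim((1-r)/(1-t))^{\beta(\nu)}$. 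Then for $r\le t$,
\[
\widehat W(t)\lesssim \Big(\frac{1-t}{1-r}\Big)^{\alpha}\Big(\frac{1-r}{1-t}\Big)^{\beta(\nu)\gamma}\widehat W(r)=\Big(\frac{1-t}{1-r}\Big)^{\alpha-\beta\gamma}\widehat W(r).
\]
The exponent $\alpha-\beta\gamma\ge\alpha/2>0$, so Lemma \ref{Dcheck} yields $W\in\widecheck{\mathcal D}$. Hence $W\in\mathcal D$.
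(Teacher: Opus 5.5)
Your proof is correct and complete. The lower bound is just the monotonicity of $\widehat\nu$. In the upper bound, the dyadic splitting $r_n=1-2^{-n}(1-r)$ correctly trades the polynomial growth $\widehat\nu(r_{n+1})^{-\gamma}\lesssim 2^{(n+1)\beta(\nu)\gamma}\widehat\nu(r)^{-\gamma}$ (from $\nu\in\widehat{\mathcal D}$) against the polynomial decay $\widehat\omega(r_n)\lesssim 2^{-n\alpha(\omega)}\widehat\omega(r)$ (from Lemma \ref{Dcheck}). With $\gamma_0=\alpha(\omega)/(2\beta(\nu))$, the resulting constants are even uniform in $\gamma$. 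The membership $W=\omega\widehat\nu^{-\gamma}\in\mathcal D$ then follows from the two-sided estimate exactly as you show, via Lemmas \ref{Dhat}(ii) and \ref{Dcheck}.

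The paper itself gives no proof of this lemma; it is quoted from \cite{PR1}, so there is no in-paper argument to compare with. The only point you leave implicit is that $W$ is integrable on $\mathbb D$, hence a weight. This is immediate from the case $r=0$ of your upper bound, since $\int_{\mathbb D}W\,\mathrm dA\le 2\widehat W(0)<\infty$.
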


For $0<r<1$ and $f\in H(\mathbb{D})$, set
$$
M_p(r,f)=\left(\frac{1}{2\pi}\int_{0}^{2\pi}|f(re^{i\theta})|^p \mathrm{d}\theta\right)^{1/p}.
$$
The next lemma gives some useful estimates of the reproducing kernels of the weighted Bergman spaces induced by $\widehat{\mathcal{D}}$-weights, which can be referred to \cite[Theorem 1]{PR4}.
\begin{letterlemma}\label{Ne1}
	Let $\omega \in \widehat{\mathcal{D}}$, $0<p<\infty$, and
	$n \in \mathbb{N} \cup\{0\}$. Then the following assertions hold:
	\begin{itemize}
		\item [(i)]$M_p^p\left(r,(B_z^\omega)^{(n)}\right)\asymp\int_{0}^{|z|r} \frac{\mathrm{d}t}{\widehat{\omega}(t)^p(1-t)^{p(n+1)}}, \quad
r,|z|\rightarrow 1^-.$
		\item [(ii)] If $\nu\in \widehat{\mathcal{D}}$, then
		$$\left\|(B_{z}^{\omega}\right)^{(n)}\|_{A_{\nu}^{p}}^{p}\asymp\int_{0}^{|z|}
\frac{\widehat{\nu}(t)}{\widehat{\omega}(t)^{p}(1-t)^{p(n+1)}}\mathrm{d}t,
		\quad{|z| \rightarrow 1^{-}}.$$
		In particular, if $1<p<\infty$, then
		$$\left\|B_{z}^{\omega}\right \|_{A_{\omega}^{p}}^{p}\asymp
		\frac{1}{\Big(\widehat{\omega}(z)(1-|z|)\Big)^{p-1}}, \quad{z \in \mathbb{D}}.$$
	\end{itemize}
\end{letterlemma}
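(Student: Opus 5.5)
The statement is Lemma \ref{Ne1}. I would prove it by working directly with the Taylor coefficients of the kernel and reducing everything to estimates for moments of $\omega$. Write
$$(B_z^\omega)^{(n)}(\zeta)=\sum_{m\ge n}\frac{m(m-1)\cdots(m-n+1)\,\bar z^{m}\zeta^{m-n}}{2\omega_{2m+1}}.$$

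\textbf{Step 1: moment asymptotics.} For $\omega\in\widehat{\mathcal D}$ I would first establish
$$\omega_x\asymp\widehat\omega\Big(1-\frac1x\Big),\qquad x\ge1,$$
directly from the definition together with Lemma \ref{Dhat}(ii). Combined with Lemma \ref{Dhat}(v), this shows that the coefficients
$$a_m\asymp \frac{m^n}{\widehat\omega(1-1/m)}$$
are essentially constant on dyadic blocks $I_k=[2^k,2^{k+1})$ and grow at most polynomially.

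\textbf{Step 2: block decomposition and part (i).}

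1. Decompose $g=(B_z^\omega)^{(n)}$ with smooth (de la Vallée Poussin type) polynomial multipliers $\Delta_k$ adapted to $I_k$.
2. For each block, prove the two-sided estimate
$$M_p^p(\rho,\Delta_k g)\asymp \big(\rho|z|\big)^{2^kp}\,a_{2^k}^p\,2^{k(p-1)}.$$
Because the coefficients vary regularly on the block, $\Delta_k g$ behaves like $a_{2^k}(\rho\bar z)^{2^k}$ times a Fejér-type kernel of degree $2^k$. The $H^p$ norm of such a kernel is $\asymp 2^{k(1-1/p)}$ for every $p>0$, once the kernel is smooth enough.
3. Sum over $k$. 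I expect this to be the main obstacle, because $M_p$ is not additive across blocks. I would handle it with the known Hardy-space theorem for power series whose coefficients are positive and regularly varying (Mateljević–Pavlović), which gives
$$M_p^p(\rho,g)\asymp\sum_k(\rho|z|)^{2^kp}\,a_{2^k}^p\,2^{k(p-1)}.$$
For $p\le1$, the upper bound needs the $p$-triangle inequality. The lower bound uses the positivity of the coefficients, for example by testing on $\zeta$ near the positive real direction of $\bar z$ and estimating a single dominant block.
4. Compare the dyadic sum with the integral. With $t=1-2^{-k}$, the doubling of $\widehat\omega$ turns the sum into
$$\int_0^{|z|\rho}\frac{\mathrm{d}t}{\widehat\omega(t)^p(1-t)^{p(n+1)}}.$$
The factor $(\rho|z|)^{2^kp}$ effectively truncates the sum at $1-2^{-k}\approx\rho|z|$. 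This gives (i) as $\rho,|z|\to1^-$.

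\textbf{Step 3: part (ii).} Integrate in polar coordinates:
$$\|g\|_{A^p_\nu}^p=2\int_0^1M_p^p(s,g)\,\nu(s)s\,\mathrm{d}s.$$
Insert (i), where for $s$ bounded away from $1$ the contribution is harmless. Then apply Fubini, so the inner integral over $s$ becomes a tail of $\nu$. Since $\nu\in\widehat{\mathcal D}$, that tail is comparable to $\widehat\nu(t)$ up to the truncation at $|z|$, and this gives the stated formula.

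\textbf{Step 4: the special case $\nu=\omega$, $n=0$, $p>1$.} Here the integral is
$$\int_0^{|z|}\frac{\mathrm{d}t}{\widehat\omega(t)^{p-1}(1-t)^p}.$$
For the upper bound, use that $\widehat\omega(t)\ge\widehat\omega(z)$ for $t\le|z|$, so the integral is at most
$$\frac{1}{\widehat\omega(z)^{p-1}}\int_0^{|z|}\frac{\mathrm{d}t}{(1-t)^p}\lesssim \frac{1}{\big(\widehat\omega(z)(1-|z|)\big)^{p-1}}.$$
For the lower bound, restrict to $t\in[2|z|-1,|z|]$. On this interval $\widehat\omega(t)\asymp\widehat\omega(z)$ by the doubling condition and $1-t\asymp1-|z|$, which gives the reverse inequality. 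For $|z|$ away from $1$, both sides are bounded above and below, so the estimate holds on all of $\mathbb D$.
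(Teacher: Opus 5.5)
The paper does not prove this lemma; it quotes it from Pel\'aez--R\"atty\"a, so your sketch has to stand on its own. Steps 1, 3 and 4 are fine: the moment asymptotics, the passage from (i) to (ii) by polar coordinates and Fubini (with the routine truncation near $t=|z|$), and the evaluation of the special case. The genuine gap is the lower bound in (i). The mechanism you describe is to restrict to the arc where $\arg(\bar z\zeta)=O(1-\rho|z|)$, use positivity of the coefficients, and keep one dominant block. This yields only
\[
M_p^p\big(\rho,(B_z^\omega)^{(n)}\big)\gtrsim \frac{(1-\rho|z|)^{1-p(n+1)}}{\widehat\omega(\rho|z|)^p},
\]
which is the last term of the dyadic sum $\sum_{2^k\lesssim(1-\rho|z|)^{-1}}2^{k(p(n+1)-1)}\widehat\omega(1-2^{-k})^{-p}$. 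That term is guaranteed to dominate only when $p(n+1)>1$, because only then do the terms grow geometrically.

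When $p(n+1)\le 1$, all scales can contribute comparably. Take $\omega\equiv 1$, $n=0$, $p=1/2$, so that $B_z(\zeta)=(1-\bar z\zeta)^{-2}$. Then
\[
M_{1/2}^{1/2}(\rho,B_z)=\frac{1}{2\pi}\int_0^{2\pi}\frac{d\theta}{|1-\rho|z|e^{i\theta}|}\asymp \log\frac{e}{1-\rho|z|},
\]
which agrees with the integral in (i), but your argument gives only a constant. The logarithm comes from angular distances much larger than $1-\rho|z|$, and positivity of the coefficients gives no pointwise control of the kernel there. Invoking a Mateljevi\'c--Pavlovi\'c theorem ``for positive regularly varying coefficients'' does not close this as written. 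You neither state that theorem nor check its hypotheses, and the only lower-bound argument you actually give is the one-block one.

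A clean repair for $0<p\le 2$ is the Hardy--Littlewood inequality $\sum_{m\ge 0}(m+1)^{p-2}|\widehat F(m)|^p\lesssim \|F\|_{H^p}^p$, applied to $F(\zeta)=(B_z^\omega)^{(n)}(\rho\zeta)$. By Step 1 its left side is comparable to the full dyadic sum $\sum_k 2^{k(p-1)}a_{2^k}^p(\rho|z|)^{2^kp}$, hence to the integral in (i). This covers the missing case $p(n+1)\le 1$.

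A second, smaller gap is in the block estimate of Step 2. Your upper bound needs $\|\Delta_k g\|_{H^p}\lesssim a_{2^k}2^{k(1-1/p)}$ for $p<2$: through the $p$-triangle inequality when $p\le 1$, and through the triangle inequality plus geometric growth when $1<p<2$. Comparability of the coefficients on a block, which is all Step 1 gives, does not imply this. A bounded positive perturbation of the coefficients, for example with Rudin--Shapiro signs, raises the $H^p$ norm of a block to order $2^{k/2}a_{2^k}$.

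What you need is smoothness of $x\mapsto 1/\omega_{2x+1}$ at scale $2^k$, namely $|\partial_x^j\omega_x|\lesssim x^{-j}\omega_x$. This does hold for $\omega\in\widehat{\mathcal D}$. Since $\log\frac1t\le\frac{1-t}{t}$, one has $|\partial_x^j\omega_x|=\int_0^1(\log\frac1t)^j t^x\omega(t)\,dt\le(\omega_{[j]})_{x-j}$, and Lemma~\ref{Dhat}(iv),(v) finish the estimate.

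Finally, your two-sided block formula is accurate only when $2^k(1-\rho|z|)\lesssim 1$. For larger $k$, factor out $\zeta^{2^{k-1}-n}$ and use monotonicity of $M_p$ to get an upper bound with the factor $(\rho|z|)^{2^{k-1}-n}$. That bound makes those blocks negligible, which is all the argument needs.
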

For the weighted Bergman spaces, it turns out that the class of $\mathcal{D}$ weights is the largest class of radial weights $\omega$ such that the Littlewood-Paley formula holds for $A_{\omega}^p$ (see \cite{PR2} for details).
\begin{letterlemma}\label{lemE}
Let $\omega$ be a radial weight, $0<p<\infty$ and $k\in \mathbb{N}$. Then
\begin{equation}\label{Littlewood}
\|f\|_{A_\omega^p}\asymp \int_{\mathbb{D}} |f^{(k)}(z)|^p(1-|z|)^{kp}\omega(z)\mathrm{d}A(z)+ \sum_{j=0}^{k-1} |f^{(j)}(0)|^p,\quad{f\in H(\mathbb{D})},
\end{equation}
if and only if $\omega\in \mathcal{D}$.
\end{letterlemma}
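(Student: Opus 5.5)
The statement is a classical Lemma E cited from \cite{PR2}, but a proof plan runs as follows. Both directions are needed. For sufficiency, assume $\omega\in\mathcal{D}$. I will work with integral means $M_p(r,f)$ and radial integration. The estimate $\int|f^{(k)}|^p(1-|z|)^{kp}\omega\,\mathrm{d}A\lesssim\|f\|^p_{A^p_\omega}$ needs only $\omega\in\widehat{\mathcal{D}}$. By induction it suffices to treat $k=1$. I use the Cauchy estimate on Bergman discs: $|f'(z)|^p(1-|z|)^p\lesssim (1-|z|)^{-2}\int_{\Delta(z)}|f|^p\,\mathrm{d}A$ with $\Delta(z)$ a pseudo-hyperbolic disc. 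I then apply Fubini and replace $\omega$ by its averages over dyadic annuli, using $\widehat{\omega}(r)\asymp\widehat{\omega}(\frac{1+r}{2})$ from Lemma \ref{Dhat}. Here $\omega$ is not pointwise regular, so I first pass to the weight $\widetilde{\omega}(z)=\widehat{\omega}(z)/(1-|z|)$. For $\omega\in\widehat{\mathcal{D}}$ one has the standard identity $\|f\|_{A^p_\omega}\asymp\|f\|_{A^p_{\widetilde\omega}}$ up to the forward inequality, via $\int_0^1 M_p^p(r,f)\omega(r)\,\mathrm{d}r$. Because $M_p(r,f)$ is increasing, integration by parts gives $\int M_p^p\omega\le\int M_p^p\,\widehat{\omega}\,\mathrm{d}\log$-type bounds. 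For the left side involving $f'$, I use $M_p(r,f')\lesssim M_p(\frac{1+r}{2},f)/(1-r)$ together with the doubling of $\widehat{\omega}$.

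The converse inequality $\|f\|^p_{A^p_\omega}\lesssim\int|f'|^p(1-|z|)^p\omega+|f(0)|^p$ is where $\widecheck{\mathcal{D}}$ enters. I write $\|f\|_{A^p_\omega}^p=\int_0^1 M_p^p(r,f)\omega(r)r\,\mathrm{d}r$ and integrate by parts to get $\int_0^1 \frac{\mathrm{d}}{\mathrm{d}r}M_p^p(r,f)\,\widehat{\omega}(r)\,\mathrm{d}r$ plus $|f(0)|^p$. For $p\ge1$, the inequality $\frac{\mathrm{d}}{\mathrm{d}r}M_p^p\le pM_p^{p-1}(r,f)M_p(r,f')$ (valid since $|\partial_r|f||\le|f'|$) and Hölder yield
$$\|f\|^p_{A^p_\omega}\lesssim |f(0)|^p+\Big(\int_0^1 M_p^p(r,f)\frac{\widehat{\omega}(r)}{1-r}\mathrm{d}r\Big)^{1-1/p}\Big(\int_0^1 M_p^p(r,f')(1-r)^{p-1}\widehat{\omega}(r)\,\mathrm{d}r\Big)^{1/p}.$$
The key point is to show $\int M_p^p\widehat{\omega}/(1-r)\lesssim\int M_p^p\omega$ and $\int M_p^p(r,f')(1-r)^{p-1}\widehat{\omega}\lesssim\int M_p^p(r,f')(1-r)^p\omega$. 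Both follow from monotonicity of $M_p$ once $\widehat{\omega}(r)\lesssim\omega$-mass on $[r,1-\frac{1-r}{K}]$. Reverse doubling gives exactly this: $\widehat{\omega}(r)-\widehat{\omega}(1-\frac{1-r}{K})\ge(1-1/C)\widehat{\omega}(r)$. A dyadic decomposition with points $r_n=1-K^{-n}$ then completes the estimate. For $p<1$, the same scheme works using $M_p$ subharmonicity-type estimates (Hardy–Littlewood), replacing Hölder by a direct $L^p$ radial estimate $|f(r\xi)|\le|f(0)|+\int_0^r|f'(s\xi)|\mathrm{d}s$ together with a discretization over $r_n$. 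I expect this $p<1$ case, and the bookkeeping with non-regular $\omega$, to be the main technical obstacle.

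For necessity, assume the equivalence holds for some $p,k$. I test it with the monomials $f(z)=z^n$. Since $\|z^n\|^p_{A^p_\omega}\asymp\omega_{np+1}$ and $\int|f^{(k)}|^p(1-|z|)^{kp}\omega\asymp n^{kp}(\omega_{[kp]})_{(n-k)p+1}$, the equivalence forces $\omega_x\asymp x^{kp}(\omega_{[kp]})_x$ for large $x$. The bound $\gtrsim$ holds automatically, and the bound $\lesssim$ is condition (iv) of Lemma \ref{Dhat}, giving $\omega\in\widehat{\mathcal{D}}$. The other direction, $\omega_x\lesssim x^{kp}(\omega_{[kp]})_x$, must be shown equivalent to $\omega\in\widecheck{\mathcal{D}}$. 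I would do this by converting the moments to $\widehat{\omega}$ through $\omega_x\asymp\widehat{\omega}(1-1/x)$, valid for $\widehat{\mathcal{D}}$, and the analogous formula for $\omega_{[kp]}$. This yields $\widehat{\omega}(r)\lesssim\int_r^1(1-s)^{kp}\omega(s)\,\mathrm{d}s/(1-r)^{kp}$. If reverse doubling failed along a sequence, the mass of $\omega$ on $[r,1)$ would concentrate near $r$ relative to scale $1-r$, which contradicts this inequality.
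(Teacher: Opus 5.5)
The paper does not prove this lemma; it quotes it from \cite{PR2}, so your outline has to stand on its own. The sufficiency half does. The necessity half has one unjustified step.

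\textbf{The gap.} It sits at ``the analogous formula for $\omega_{[kp]}$''. Put $\beta=kp$. To pass from $\omega_x\lesssim x^\beta(\omega_{[\beta]})_x$ to $(1-r)^\beta\widehat\omega(r)\lesssim\widehat{\omega_{[\beta]}}(r)$ you need $(\omega_{[\beta]})_x\lesssim\widehat{\omega_{[\beta]}}(1-1/x)$. That bound holds only if $\omega_{[\beta]}\in\widehat{\mathcal{D}}$, and this does not follow from $\omega\in\widehat{\mathcal{D}}$.

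For a counterexample, take narrow bumps of mass $2^{-n}$ at the radii $s_n=1-2^{-2^n}$, plus a negligible positive background. Then $\omega\in\widehat{\mathcal{D}}$, yet at $1-r=\frac32(1-s_n)$ the ratio $\widehat{\omega_{[\beta]}}(r)/\widehat{\omega_{[\beta]}}(\frac{1+r}{2})$ is of order $2^{\beta 2^n}$.

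You cannot bypass this with what $\omega\in\widehat{\mathcal{D}}$ alone gives, namely $x^\beta(\omega_{[\beta]})_x\lesssim x^\beta\widehat{\omega_{[\beta]}}(1-1/x)+\widehat\omega(1-1/x)$. Its last term is as large as $\omega_x$ itself, so it carries no information.

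The repair uses the two-sided relation you already extracted from the monomials. For $x_0\le x\le y$, Lemma \ref{Dhat}(v) gives $(\omega_{[\beta]})_x\asymp x^{-\beta}\omega_x\lesssim x^{-\beta}(y/x)^{\eta}\omega_y\asymp (y/x)^{\eta+\beta}(\omega_{[\beta]})_y$; the range of small $x$ is trivial. So $\omega_{[\beta]}$ satisfies (v), hence lies in $\widehat{\mathcal{D}}$, and only then is your conversion legitimate.

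\textbf{Two slips in the same part.}

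First, neither half of $\omega_x\asymp x^{kp}(\omega_{[kp]})_x$ is automatic. The bound $x^{kp}(\omega_{[kp]})_x\lesssim\omega_x$ is Lemma \ref{Dhat}(iv) and fails for $\omega(r)=e^{-1/(1-r)}$. The reverse bound fails for $\omega(r)=(1-r)^{-1}\log^{-2}\frac{e}{1-r}$. Your sentence assigning (iv) to the ``$\lesssim$'' bound also contradicts the sentence after it.

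Second, your closing heuristic is inverted. Failure of reverse doubling puts most of the $\omega$-mass of $[r,1)$ into $[t,1)$, where $t=1-\frac{1-r}{K}$, i.e.\ far from $r$ on the scale $1-r$, not near $r$. The rigorous step is $\widehat{\omega_{[\beta]}}(r)\le(1-r)^\beta\big(\widehat\omega(r)-\widehat\omega(t)\big)+K^{-\beta}(1-r)^\beta\widehat\omega(t)$. Combined with $c(1-r)^\beta\widehat\omega(r)\le\widehat{\omega_{[\beta]}}(r)$, this gives $(1-K^{-\beta})\widehat\omega(t)\le(1-c)\widehat\omega(r)$, which is reverse doubling once $K^{-\beta}<c$.

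\textbf{Sufficiency.} This half is sound.
\begin{itemize}
\item The estimate $M_p(r,f^{(k)})\lesssim(1-r)^{-k}M_p(\frac{1+r}{2},f)$ plus doubling gives the forward inequality, for every $0<p<\infty$.
\item For the reverse inequality with $p\ge1$, your integration by parts, the H\"older split and the two dyadic comparisons along $r_n=1-K^{-n}$ are correct; the comparisons use doubling and reverse doubling together.
\item The $p<1$ discretization also works. It ends at $\int_{\D}|f'|^p(1-|z|)^p\widetilde\omega\,\mathrm{d}A$, which your second comparison handles.
\end{itemize}
One caution on induction in $k$. It needs $\omega_{[p]}$ in the relevant class. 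As the example above shows, $\widehat{\mathcal{D}}$ is not stable under $\omega\mapsto\omega_{[p]}$, so for the forward inequality under $\widehat{\mathcal{D}}$ alone use the $k$-th order Cauchy estimate instead. For $\omega\in\mathcal{D}$, induction in both directions is fine because $\omega_{[\beta]}\in\mathcal{D}$ by \cite[Lemma 3]{PR3}.
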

Let
$$
\widetilde\omega(r)=\frac{\widehat\omega(r)}{1-r},\quad{0 \leq r \leq 1.}
$$
Then the following result gives the useful equivalent relationship of analytic functions in the two weighted spaces (see \cite[Proposition 5]{PRS}).

\begin{letterlemma}\label{DR}
Let $\omega \in \mathcal{D}$ and $0<p<\infty$. Then $\widetilde{\omega}\in \mathcal{R}$, and $\widehat{\widetilde\omega}\asymp\widehat\omega$. Moreover,
\begin{equation}\label{RD}
\| f \|_{A_{\widetilde\omega}^{p}}\asymp\|f \|_{A_{\omega}^{p}}
\end{equation}
holds for all $f \in H(\mathbb{D})$.
\end{letterlemma}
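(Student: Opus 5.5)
The statement to be proved is Lemma \ref{DR}. The plan is to first show $\widetilde\omega\in\mathcal R$ with $\widehat{\widetilde\omega}\asymp\widehat\omega$, and then to obtain \eqref{RD} by comparing both norms with a common Littlewood--Paley integral.

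\textbf{Regularity of $\widetilde\omega$.} Since $\omega\in\mathcal D$, Lemma \ref{Dhat}(ii) and Lemma \ref{Dcheck} give two-sided power bounds. For $0\le r\le t<1$,
$$\Big(\frac{1-t}{1-r}\Big)^{\beta}\lesssim\frac{\widehat\omega(t)}{\widehat\omega(r)}\lesssim\Big(\frac{1-t}{1-r}\Big)^{\alpha},$$
with $0<\alpha\le\beta$. To compute $\widehat{\widetilde\omega}(r)=\int_r^1\widehat\omega(s)(1-s)^{-1}\,ds$, I split $[r,1)$ dyadically into intervals $I_n=[1-2^{-n}(1-r),\,1-2^{-n-1}(1-r)]$.

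On each $I_n$ the doubling gives $\widehat\omega(s)\asymp\widehat\omega(1-2^{-n}(1-r))$, and $\int_{I_n}ds/(1-s)=\log 2$. The upper bound from Lemma \ref{Dcheck} gives $\widehat\omega(1-2^{-n}(1-r))\lesssim 2^{-n\alpha}\widehat\omega(r)$. Summing this geometric series yields $\widehat{\widetilde\omega}(r)\lesssim\widehat\omega(r)$. The $n=0$ term alone yields $\widehat{\widetilde\omega}(r)\gtrsim\widehat\omega(r)$.

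Hence $\widehat{\widetilde\omega}(r)\asymp\widehat\omega(r)=\widetilde\omega(r)(1-r)$. Since $\widetilde\omega$ is continuous, radial and positive, this is exactly $\widetilde\omega\in\mathcal R$. The reverse-doubling input is essential here, because without it the series diverges.

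\textbf{Norm equivalence.} Both $\omega$ and $\widetilde\omega$ lie in $\mathcal D$, since $\mathcal R\subset\mathcal D$. Lemma \ref{lemE} with $k=1$ therefore reduces \eqref{RD} to showing
$$\int_{\mathbb D}|f'(z)|^p(1-|z|)^p\omega(z)\,dA(z)\asymp\int_{\mathbb D}|f'(z)|^p(1-|z|)^p\widetilde\omega(z)\,dA(z).$$
Set $g=f'$ and $\phi(r)=M_p^p(r,g)\,r\,(1-r)^p$. In polar coordinates both sides become $\int_0^1\phi(r)\,\omega(r)\,dr$ and $\int_0^1\phi(r)\,\widetilde\omega(r)\,dr$, up to constants.

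The function $M_p^p(r,g)$ is nondecreasing. The factor $(1-r)^p$ is harmless because it is roughly constant on dyadic annuli. I therefore discretize on the annuli $r\in[1-2^{-n},1-2^{-n-1}]$. The $\omega$-mass of the $n$-th annulus is $\widehat\omega(1-2^{-n})-\widehat\omega(1-2^{-n-1})$, which is at most $\widehat\omega(1-2^{-n})$. By reverse doubling, after grouping $K$ consecutive annuli, this mass is also at least a constant times $\widehat\omega(1-2^{-n})$. The $\widetilde\omega$-mass of the $n$-th annulus is $\asymp\widehat\omega(1-2^{-n})$ by the first step.

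Using monotonicity of $M_p$, I then compare the two sums $\sum_n M_p^p(1-2^{-n},g)\,2^{-np}\,\widehat\omega(1-2^{-n})$, shifting the index by a bounded amount in each direction. This gives both inequalities.

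\textbf{Main obstacle.} The delicate point is the lower bound of the $\omega$-integral by the $\widetilde\omega$-integral. There $\omega$ can vanish on whole annuli, so I must group $K$ annuli, with $K$ taken from the definition of $\widecheck{\mathcal D}$, and use monotonicity of $M_p$ to pass to the outermost annulus in each group. The shifts in index also change $(1-r)^p$ and $\widehat\omega$ only by bounded factors, by doubling.

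Alternatively, one could bypass the derivative step and argue directly with $M_p^p(r,f)$ in the same way, without invoking Lemma \ref{lemE} at all.
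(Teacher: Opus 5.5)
Your proof is correct. The paper gives no argument of its own for this lemma; it quotes it from \cite[Proposition 5]{PRS}. So the useful comparison is with the most direct self-contained proof.

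Your first step is the standard one, and it can be done in one line. By Lemma~\ref{Dcheck}, $\widehat{\widetilde\omega}(r)\le C\widehat\omega(r)(1-r)^{-\alpha}\int_r^1(1-s)^{\alpha-1}\,\mathrm{d}s=\frac{C}{\alpha}\widehat\omega(r)$. In the other direction, $\widehat{\widetilde\omega}(r)\ge\widehat\omega(\frac{1+r}{2})\log 2\gtrsim\widehat\omega(r)$ by doubling.

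Your second step is heavier than it needs to be. The function $\Phi(r)=rM_p^p(r,f)$ is nondecreasing with $\Phi(0)=0$. Apply Fubini on $[0,\rho]$, which gives $\int_0^\rho\Phi\nu\,\mathrm{d}r=\int_{(0,\rho]}(\widehat\nu(t)-\widehat\nu(\rho))\,\mathrm{d}\Phi(t)$, and then monotone convergence as $\rho\to1^-$. For every radial weight $\nu$ and every $f\in H(\mathbb{D})$ this yields
\[
\|f\|_{A^p_\nu}^p=2\int_0^1\Phi(r)\nu(r)\,\mathrm{d}r=2\int_{(0,1)}\widehat\nu(r)\,\mathrm{d}\Phi(r)\in[0,\infty].
\]
So $\widehat{\widetilde\omega}\asymp\widehat\omega$ gives \eqref{RD} immediately, for all $0<p<\infty$. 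This route needs no Littlewood--Paley formula and no grouping of annuli. Reverse doubling is used only once, in the first step, and the possibility that $\omega$ vanishes on whole annuli never has to be addressed. Your closing remark about working with $M_p^p(r,f)$ directly is a discretized version of this identity.

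Routing the comparison through Lemma~\ref{lemE} is legitimate inside this paper, where that lemma is taken as given. However, it derives an elementary fact from a much deeper theorem. Littlewood--Paley formulas for $\mathcal D$-weights can themselves be obtained by transferring to a regular weight such as $\widetilde\omega$, so there is some risk of circularity. As a standalone proof, the direct route is preferable.
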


\section{The characterizations of bounded Toeplitz operators}
\subsection{The characterization of $\mathrm{BMO}_{\omega,r}^p$ and the Berezin-type transform.}
At the beginning, we recall the following two spaces that will be frequently used in the sequel. For $r>0$, let $\mathrm{BO}$ denote the space of all continuous $f:\mathbb{D}\rightarrow \mathbb{C}$ such that
$\sup_{z\in\mathbb{D}} \Omega_r f(z)<\infty,$
where
$$\Omega_r f(z)=\sup \Big\{|f(z)-f(w)|:\beta(z,w)<r\Big\}.$$
It is well-known that (see \cite[Chapter 8]{Zhu2}) the definition of $\mathrm{BO}$ is independent of the choice of $r$, and $f\in \mathrm{BO}$ if and only if
\begin{equation}\label{BO}
|f(z)-f(w)|\lesssim \|f\|_{\mathrm{BO}}\Big(1+\beta(z,w)\Big),\quad{z,w\in\mathbb{D}}.
\end{equation}
For $0<p<\infty$, we denote by $\mathrm{BA}_{\omega,r}^p$ the space of functions $f\in L_\omega^p$ with
$$
\|f\|_{\mathrm{BA}_{\omega,r}^p}=\sup\left\{\Big(\widehat{(|f|^p)}_{r,\omega}(z)\Big)^\frac{1}{p}:z \in \mathbb{D}\right\}<\infty,
$$
where $\widehat{(|f|^p)}_{r,\omega}(z)$ is defined by \eqref{eq1.1}. Actually, if $\omega\in\mathcal{D}$, it follows from \cite[Lemma 10]{PPR} that  $\mathrm{BA}_{\omega,r}^p=\mathrm{BA}_{\omega,r_0}^p$ with equivalent norms for all $r\geq r_0$, where $r_0$ is chosen in \eqref{Dweight2}. We denote this space by $\mathrm{BA}_{\omega}^p$ which is independent of $r\geq r_0$. By Lemma \ref{Dhat}(iii), there exists $\alpha_1=\alpha(\omega)$ such that
\begin{equation}\label{Dweight3}
|k_{\omega,z}^\alpha(\zeta)|\asymp \frac{(1-|z|)^{\alpha+3/2}}{\widehat{\omega}(z)^{1/2}|1-\bar{z}{\zeta}|^{\alpha+2}},\quad{z,\zeta \in\mathbb{D}}
\end{equation}
for $\alpha>\alpha_1$. Recall that a sequence $\{a_j\}$ in $\mathbb{D}$ is called an $r$-lattice in the Bergman metric if $\mathbb{D}=\bigcup_{j=1}^\infty D(a_j,r)$ and $\beta(a_i,a_j)\geq r/2$ for $i\neq j$, see \cite{Zhu2} for details. For $\omega\in \mathcal{D}$, the space $\mathrm{BA}_{\omega}^p$ can be characterized by the following lemma.

\begin{lemma}\label{BA}
Let $\omega\in \mathcal{D}$ and $1\leq p<\infty$. Then $f$ belongs to $\mathrm{BA}_{\omega}^p$ if and only if there exists an $\alpha_1=\alpha(\omega)>0$ such that $\sup_{z\in\mathbb{D}}B_\omega^\alpha(|f|^p)(z)<\infty$ for all $\alpha>\alpha_1$.
\end{lemma}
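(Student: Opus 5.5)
The plan is to compare $B_\omega^\alpha(|f|^p)(z)$ with the local averages $\widehat{(|f|^p)}_{r,\omega}(z)$, using the kernel estimate \eqref{Dweight3}. Throughout I take $\alpha>\alpha_1$ so that \eqref{Dweight3} holds, and $r\ge r_0$ so that \eqref{Dweight2} holds. By definition,
$$
B_\omega^\alpha(|f|^p)(z)=\int_{\mathbb D}|f(\zeta)|^p|k^\alpha_{\omega,z}(\zeta)|^2\omega(\zeta)\,\mathrm dA(\zeta)\asymp \frac{(1-|z|)^{2\alpha+3}}{\widehat\omega(z)}\int_{\mathbb D}\frac{|f(\zeta)|^p\,\omega(\zeta)\,\mathrm dA(\zeta)}{|1-\bar z\zeta|^{2\alpha+4}}.
$$

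\emph{Necessity} (if the Berezin transform is bounded, then $f\in\mathrm{BA}^p_\omega$). For $\zeta\in D(z,r)$ we have $|1-\bar z\zeta|\asymp 1-|z|$. Hence $|k^\alpha_{\omega,z}(\zeta)|^2\asymp 1/(\widehat\omega(z)(1-|z|))\asymp 1/\omega(D(z,r))$ there, by \eqref{Dweight2}. Restricting the integral to $D(z,r)$ then gives
$$
\widehat{(|f|^p)}_{r,\omega}(z)\lesssim B^\alpha_\omega(|f|^p)(z),
$$
so $\sup_z B_\omega^\alpha(|f|^p)<\infty$ for a single $\alpha>\alpha_1$ already forces $f\in\mathrm{BA}^p_\omega$.

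\emph{Sufficiency} (if $f\in\mathrm{BA}^p_\omega$, the Berezin transform is bounded). Fix an $r$-lattice $\{a_j\}$, with $r\ge r_0$, and write $M=\|f\|^p_{\mathrm{BA}^p_{\omega,r}}$. On each $D(a_j,r)$ we have $|1-\bar z\zeta|\asymp|1-\bar z a_j|$ uniformly in $z$. Therefore
$$
B_\omega^\alpha(|f|^p)(z)\lesssim \frac{(1-|z|)^{2\alpha+3}}{\widehat\omega(z)}\sum_j\frac{\omega(D(a_j,r))\,\widehat{(|f|^p)}_{r,\omega}(a_j)}{|1-\bar z a_j|^{2\alpha+4}}
\lesssim M\,\frac{(1-|z|)^{2\alpha+3}}{\widehat\omega(z)}\sum_j\frac{\omega(D(a_j,r))}{|1-\bar z a_j|^{2\alpha+4}}.
$$
Next I convert the sum back into an integral. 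The discs $D(a_j,r)$ have bounded overlap, and $|1-\bar z a_j|\asymp |1-\bar z\zeta|$ on $D(a_j,r)$. So the sum is
$$
\lesssim \int_{\mathbb D}\frac{\omega(\zeta)\,\mathrm dA(\zeta)}{|1-\bar z\zeta|^{2\alpha+4}}.
$$
By Lemma \ref{Dhat}(iii), or more precisely the standard estimate behind it, this integral is $\asymp \widehat\omega(z)/(1-|z|)^{2\alpha+3}$ once $2\alpha+3$ exceeds a constant $\lambda(\omega)$, i.e.\ for $\alpha$ large. Plugging this in gives $B_\omega^\alpha(|f|^p)(z)\lesssim M$ uniformly in $z$, for all large $\alpha$. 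Enlarging $\alpha_1$ if necessary covers both \eqref{Dweight3} and this requirement.

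The main obstacle is the integral estimate $\int_{\mathbb D}\omega(\zeta)\,|1-\bar z\zeta|^{-(\lambda+1)}\,\mathrm dA(\zeta)\lesssim\widehat\omega(z)(1-|z|)^{-\lambda}$ for \emph{every} sufficiently large $\lambda$, not just the particular $\lambda$ in Lemma \ref{Dhat}(iii). I would prove it directly, and the argument uses only $\omega\in\widehat{\mathcal D}$:
\begin{enumerate}
\item Integrate in angle: $\int_0^{2\pi}|1-\bar z se^{it}|^{-(\lambda+1)}\,\mathrm dt\asymp(1-s|z|)^{-\lambda}$.
\item Split the radial integral $\int_0^1\omega(s)(1-s|z|)^{-\lambda}\,\mathrm ds$ at $s=|z|$.
\item For $s\ge|z|$, bound $(1-s|z|)^{-\lambda}\le(1-|z|)^{-\lambda}$, so this piece is at most $\widehat\omega(z)(1-|z|)^{-\lambda}$.
\item For $s<|z|$, integrate by parts and use Lemma \ref{Dhat}(ii): $\widehat\omega(s)\lesssim\big((1-s)/(1-|z|)\big)^{\beta}\widehat\omega(z)$. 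This piece converges to the same bound provided $\lambda>\beta$.
\end{enumerate}

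Both directions hold for every $p\ge1$, since only positivity of $|f|^p$ is used.
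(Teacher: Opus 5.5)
Your argument is correct and is essentially the paper's proof. The lower bound comes from restricting $B^\alpha_\omega(|f|^p)(z)$ to $D(z,r)$ via \eqref{Dweight2} and \eqref{Dweight3}, and the upper bound from an $r$-lattice decomposition followed by the kernel-integral estimate of Lemma~\ref{Dhat}(iii). There are only two small differences: you turn the lattice sum back into an $\omega$-integral directly by bounded overlap, whereas the paper detours through $\widetilde{\omega}$ and \eqref{RD}; and you verify the integral estimate for every exponent $\lambda>\beta$ explicitly, which the paper uses implicitly and which is in any case already encoded in \eqref{Dweight3}, since $\int_{\mathbb D}|k^\alpha_{\omega,z}|^2\omega\,\mathrm dA=1$.
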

\begin{proof}
Choose $r_0$ as in \eqref{Dweight2} and $\lambda(\omega)$ as in Lemma \ref{Dhat}(iii) with $\lambda(\omega)>1$. Firstly assume $f\in \mathrm{BA}^p_{\omega}=\mathrm{BA}_{\omega,r}^p$ for
$r\geq {r_0}$.
Take an $r$-lattice $\{a_j\}$ as in \cite[Lemma 4.7]{Zhu2}. For each $\alpha>\alpha_1(\omega)=(\lambda(\omega)-3)/2$, Lemma \ref{Dhat}(iii), \eqref{Dweight2}, \eqref{RD} and \eqref{Dweight3} together give
$$
\begin{aligned}
B_\omega^\alpha(|f|^p)(z)
& \lesssim\frac{(1-|z|)^{2\alpha+3}}{\widehat{\omega}(z)}\sum_{j=1}^\infty \int_{D(a_j,r)}
\frac{|f(\zeta)|^p}{|1-\bar{z}\zeta|^{2\alpha+4}}\omega(\zeta)\mathrm{d}A(\zeta) \\
& \lesssim \frac{(1-|z|)^{2\alpha+3}}{{\widehat{\omega}(z)}}\|f\|_{\mathrm{BA}_{\omega}^p}^p\sum_{j=1}^\infty\frac{\omega(D(a_j,r))}{|1-\bar{z}a_j|^{2\alpha+4}} \\
& \lesssim \|f\|_{\mathrm{BA}_{\omega}^p}^p\frac{(1-|z|)^{2\alpha+3}}{{\widehat{\omega}(z)}}\int_{\mathbb{D}}\frac{\widetilde{\omega}(u)}{|1-\bar{z}u|^{2\alpha+4}}
\mathrm{d}A(u) \\
& \asymp \|f\|_{\mathrm{BA}_{\omega}^p}^p\frac{(1-|z|)^{2\alpha+3}}{{\widehat{\omega}(z)}}\int_{\mathbb{D}}\frac{\omega(u)}{|1-\bar{z}u|^{2\alpha+4}}\mathrm{d}A(u)\\
&\asymp
\|f\|_{\mathrm{BA}_{\omega}^p}^p,\quad{z\in\mathbb{D}},
\end{aligned}
$$
which shows that $\sup_{z\in\mathbb{D}}B_\omega^\alpha(|f|^p)(z)<\infty$.

Conversely, assume $\sup_{z\in\mathbb{D}}B_\omega^\alpha(|f|^p)(z)<\infty$ for $\alpha>\alpha_1$. Let $r_0$ be as in \eqref{Dweight2} and $r\geq r_0$. Then \eqref{Dweight2} together with \eqref{Dweight3} yields
$$
\begin{aligned}
B_\omega^\alpha(|f|^p)(z)
& \gtrsim \frac{(1-|z|)^{2\alpha+3}}{{\widehat{\omega}(z)}}\int_{D(z,r)}\frac{|f(\zeta)|^p}{|1-\bar{z}\zeta|^{2\alpha+4}}\omega(\zeta)\mathrm{d}A(\zeta) \\
& \asymp \frac{1}{\widehat{\omega}(z)(1-|z|)} \int_{D(z,r)}|f(\zeta)|^p \omega(\zeta)\mathrm{d}A(\zeta) \\
& \asymp \frac{1}{\omega(D(z,r))} \int_{D(z,r)}|f(\zeta)|^p \omega(\zeta)\mathrm{d}A(\zeta)\\
&=\widehat{(|f|^p)}_{r,\omega}(z),\quad z\in \mathbb{D},
\end{aligned}
$$
which completes the proof.
\end{proof}
Next we investigate the relationship between $\mathrm{BMO}_{\omega}^p$ and the Berezin-type transform $B_\omega^\alpha$.
\begin{proposition}\label{BMOr}
Let $\omega\in \mathcal{D}$ and $1\leq p<\infty$. Then  $f$ belongs to $\mathrm{BMO}_{\omega}^p$ if and only if
there exists an $\alpha_0=\alpha(\omega,p)>0$ such that
$$\sup_{z\in\mathbb{D}}B_\omega^\alpha\Big(|f-\widehat{f}_{r,\omega}(z)|^p\Big)(z) <\infty$$
for all $\alpha>\alpha_0$.
\end{proposition}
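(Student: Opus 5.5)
The proof will follow the two-sided pattern of Lemma \ref{BA}, with $|f|^p$ replaced by the $z$-dependent function $g_z=|f-\widehat{f}_{r,\omega}(z)|^p$. Fix $r\ge r_0$ as in \eqref{Dweight2} and take $\alpha_0\ge\alpha_1$, so that the kernel estimate \eqref{Dweight3} holds for $\alpha>\alpha_0$.

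\emph{Sufficiency.} Suppose the supremum of $B_\omega^\alpha(g_z)(z)$ is finite. Restricting the integral defining $B_\omega^\alpha(g_z)(z)$ to $D(z,r)$ and using $|1-\bar z\zeta|\asymp 1-|z|$ there, we get from \eqref{Dweight3} and \eqref{Dweight2}
$$
B_\omega^\alpha(g_z)(z)\gtrsim \frac{1}{\omega(D(z,r))}\int_{D(z,r)}|f(\zeta)-\widehat f_{r,\omega}(z)|^p\omega(\zeta)\,\mathrm dA(\zeta)=\big(\mathrm{MO}^p_{\omega,r}(f)(z)\big)^p .
$$
Taking the supremum over $z$ gives $f\in\mathrm{BMO}^p_{\omega,r}=\mathrm{BMO}^p_\omega$ by \eqref{BMO1}.

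\emph{Necessity.} Let $f\in\mathrm{BMO}^p_\omega$. Take an $r$-lattice $\{a_j\}$ and split the integral over the sets $D(a_j,r)$, as in Lemma \ref{BA}. On each $D(a_j,r)$, use
$$
|f-\widehat f_{r,\omega}(z)|^p\lesssim |f-\widehat f_{r,\omega}(a_j)|^p+|\widehat f_{r,\omega}(a_j)-\widehat f_{r,\omega}(z)|^p .
$$
The first term integrates to at most $\|f\|^p_{\mathrm{BMO}}\,\omega(D(a_j,r))$. For the second term we need that $\widehat f_{r,\omega}\in\mathrm{BO}$. This holds because for $\beta(u,v)<r$ both discs lie in $D(u,2r)$, whose $\omega$-measure is comparable to that of $D(u,r)$ by \eqref{Dweight2}. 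Hence
$$
|\widehat f_{r,\omega}(u)-\widehat f_{r,\omega}(v)|\lesssim \mathrm{MO}^1_{\omega,2r}(f)(u)\lesssim \|f\|_{\mathrm{BMO}^p_{\omega}},
$$
using H\"older and \eqref{BMO1}. Then \eqref{BO} gives
$$
|\widehat f_{r,\omega}(a_j)-\widehat f_{r,\omega}(z)|\lesssim \|f\|_{\mathrm{BMO}}\big(1+\beta(z,a_j)\big).
$$

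Combining the two terms and using \eqref{Dweight2} and \eqref{Dweight3}, we obtain
$$
B_\omega^\alpha(g_z)(z)\lesssim \|f\|^p_{\mathrm{BMO}}\,\frac{(1-|z|)^{2\alpha+3}}{\widehat\omega(z)}\sum_j\frac{\big(1+\beta(z,a_j)\big)^p\,\omega(D(a_j,r))}{|1-\bar z a_j|^{2\alpha+4}} .
$$
Since $\beta(z,\zeta)$ varies by at most $r$ on $D(a_j,r)$, the sum converts back to an integral:
$$
\frac{(1-|z|)^{2\alpha+3}}{\widehat\omega(z)}\int_{\mathbb D}\frac{\big(1+\beta(z,u)\big)^p\,\omega(u)}{|1-\bar zu|^{2\alpha+4}}\,\mathrm dA(u).
$$

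\emph{Main obstacle.} The key step is to show that this last integral is bounded uniformly in $z$, despite the logarithmic factor $(1+\beta(z,u))^p$. My plan is to use
$$
\beta(z,u)\lesssim 1+\log\frac{1}{1-|\varphi_z(u)|^2}=1+\log\frac{|1-\bar zu|^2}{(1-|z|^2)(1-|u|^2)}
$$
and to bound $(1+\beta)^p$ by $C_\epsilon\left(\dfrac{|1-\bar zu|^2}{(1-|z|)(1-|u|)}\right)^{\epsilon}$ for small $\epsilon>0$. This costs a factor $|1-\bar zu|^{2\epsilon}$, which is absorbed by enlarging $\alpha$ (this is where $\alpha_0$ comes to depend on $p$). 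It also replaces $\omega$ by $\omega_{[-\epsilon]}=\omega(u)(1-|u|)^{-\epsilon}$. For $\epsilon$ small, Lemma \ref{Dweight1} (with $\nu$ the standard weight, so that $\widehat\nu(s)\asymp 1-s$) shows that $\omega_{[-\epsilon]}\in\mathcal D$ with $\widehat{\omega_{[-\epsilon]}}(r)\asymp\widehat\omega(r)(1-r)^{-\epsilon}$. Lemma \ref{Dhat}(iii), applied to this weight with $\alpha$ large, then gives the integral
$$
\lesssim \frac{\widehat\omega(z)(1-|z|)^{-\epsilon}}{(1-|z|)^{2\alpha+3-2\epsilon}} .
$$
Multiplying by the prefactor $(1-|z|)^{2\alpha+3-\epsilon}/\widehat\omega(z)$ makes the powers of $1-|z|$ and the factors of $\widehat\omega$ cancel, leaving a bound independent of $z$. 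The delicate point to check is that the exponent in Lemma \ref{Dhat}(iii) may be taken as any sufficiently large $\lambda$, not just one specific value. If this fails, I would instead split $\mathbb D$ into the dyadic annuli $\{2^k\le |1-\bar zu|/(1-|z|)<2^{k+1}\}$ and sum with the geometric decay coming from large $\alpha$.
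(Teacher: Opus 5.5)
Your proof is correct, and the sufficiency half is essentially the paper's argument. For necessity you take a different route.

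The paper invokes the decomposition $\mathrm{BMO}^p_\omega=\mathrm{BA}^p_\omega+\mathrm{BO}$ of \cite[Theorem 11]{PPR}. It handles the $\mathrm{BA}$ piece by Lemma \ref{BA}. For the $\mathrm{BO}$ piece it bounds $|f_2(\zeta)-\widehat{(f_2)}_{r,\omega}(z)|^p$ by the $D(z,r)$-average of $|f_2(\zeta)-f_2(u)|^p$ and then applies \eqref{BO}.

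You work with $f$ itself. The lattice splitting charges the local part to $\mathrm{MO}^p_{\omega,r}(f)(a_j)$ and the drift to $|\widehat f_{r,\omega}(a_j)-\widehat f_{r,\omega}(z)|$. You then verify $\widehat f_{r,\omega}\in\mathrm{BO}$ by hand through $D(u,2r)$. For that step you also need $\omega(D(v,r))\asymp\omega(D(u,2r))$ when $\beta(u,v)<r$; this follows from \eqref{Dweight2} and the doubling property of $\widehat\omega$. You likewise need the standard facts $|1-\bar z\zeta|\asymp|1-\bar z a_j|$ on $D(a_j,r)$ and bounded overlap of the lattice discs.

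Your argument is essentially the standard proof of the decomposition unrolled inside the Berezin integral, with $\mathrm{BO}$ part $\widehat f_{r,\omega}$ and $\mathrm{BA}$ part $f-\widehat f_{r,\omega}$. What this buys you: it is self-contained and covers $p=1$ and $p>1$ at once. The paper's version is shorter, given the cited theorem.

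Both proofs close with the same estimate:
\begin{itemize}
\item the bound \eqref{beta} for the logarithmic factor;
\item Lemma \ref{Dweight1} to get $\omega_{[-\epsilon]}\in\mathcal D$ with $\widehat{\omega_{[-\epsilon]}}(s)\asymp\widehat\omega(s)(1-s)^{-\epsilon}$;
\item Lemma \ref{Dhat}(iii) applied to that weight.
\end{itemize}

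The point you flagged as delicate is harmless. The proof of Lemma \ref{Dhat} gives (iii) for every $\lambda$ larger than the exponent $\beta$ in (ii) for the weight in question. The paper uses it exactly this way, e.g.\ in Lemma \ref{BA}, so the dyadic fallback is unnecessary.

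One small correction to your parenthetical remark. Since $(1+\log X)^p\le C_{p,\epsilon}X^{\epsilon}$ for $X\ge1$, you may fix $\epsilon\le\gamma_0$ independently of $p$. In your argument $\alpha_0$ can therefore be chosen depending only on $\omega$; only the implicit constants depend on $p$.
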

\begin{proof}
We only give the proof of the case of $p>1$, the case of $p=1$ is similar.
Assume $f \in \mathrm{BMO}_{\omega}^p$. Then $f$ belongs to $\mathrm{BMO}_{\omega,r}^p$ for each $r\geq r_0$ where $r_0$ is given in \eqref{Dweight2}. Then  $f$ can be written as $f=f_1+f_2$, where $f_1\in
\mathrm{BA}_{\omega,r}^p$ and $f_2\in \mathrm{BO}$ (see \cite[Theorem 11]{PPR} for details).
Choose $\lambda(\omega)$ as in Lemma \ref{Dhat}(iii) with $\lambda(\omega)\geq1$.
Let $\gamma_0(\omega)$ be as in Lemma \ref{Dweight1}. Choose an $\varepsilon=\varepsilon(\omega)>0$ such that $p\varepsilon <\gamma_0(\omega)$. Then we have
$\omega_{[-p\varepsilon]}\in \mathcal{D}$ and $\widehat{\omega_{[-p\varepsilon]}}(r)\asymp\widehat{\omega}(r)/(1-r)^{p\varepsilon}$ for $0\leq r<1$. Let $\alpha_0(\omega,p)=p\varepsilon +(\lambda(\omega)-3)/2$.
Since $f_1\in \mathrm{BA}_{\omega,r}^p$, an application of the triangle inequality and  H\"{o}lder's inequality gives
\begin{eqnarray}\label{ba}
B_{\omega}^\alpha\Big(|f_1-\widehat{(f_1)}_{r,\omega}(z)|^p\Big)(z)
& \lesssim& \int_{\mathbb{D}}|f_1(\zeta)|^p|k_{\omega,z}^\alpha(\zeta)|^2\omega(\zeta)\mathrm{d}A(\zeta)+|\widehat{(f_1)}_{r,\omega}(z)|^p \nonumber\\
& \leq &B_{\omega}^\alpha(|f_1|^p)(z)+\widehat{(|f_1|^p)}_{r,\omega}(z),  \quad{z\in\mathbb{D}}.
\end{eqnarray}
Then by Lemma \ref{BA} and the definition of $\mathrm{BA}_{\omega,r}^p$, $B_{\omega}^\alpha\Big(|f_1-\widehat{(f_1)}_{r,\omega}(z)|^p\Big)(z)$ is bounded on
$\mathbb{D}$.
Next, we estimate $B_{\omega}^\alpha\Big(|f_2-\widehat{(f_2)}_{r,\omega}(z)|^p\Big)(z)$.
By the definition of $\widehat{(f_2)}_{r,\omega}$, H\"{o}lder's inequality and \eqref{BO}, for any $\alpha>\alpha_0(\omega,p)$ we have
\begin{eqnarray}\label{bo}
I(z)
&:=& \int_{\mathbb{D}}|f_2(\zeta)-\widehat{(f_2)}_{r,\omega}(z)|^p|k_{\omega,z}^\alpha(\zeta)|^2\omega(\zeta)\mathrm{d}A(\zeta)\nonumber \\
& \leq& \int_{\mathbb{D}}\left(\frac{1}{\omega(D(z,r))}\int_{D(z,r)}|f_2(\zeta)-{f_2}(u)|^p\omega(u)
\mathrm{d}A(u)\right)|k_{\omega,z}^\alpha(\zeta)|^2\omega(\zeta)\mathrm{d}A(\zeta) \nonumber\\
& \lesssim& \|f_2\|_{\mathrm{BO}}^p\left( 1+\int_{\mathbb{D}}\frac{1}{\omega(D(z,r))}\int_{D(z,r)}\beta(u,\zeta)^p\omega(u)
\mathrm{d}A(u)|k_{\omega,z}^\alpha(\zeta)|^2\omega(\zeta)\mathrm{d}A(\zeta)\right).
\end{eqnarray}
Notice that
\begin{equation}\label{beta}
\beta(z,\zeta)\lesssim \left(\frac{|1-\bar{z}\zeta|^2}{(1-|z|)(1-|\zeta|)}\right)^\varepsilon,\quad{z,\zeta\in \mathbb{D}}.
\end{equation}
From Lemma \ref{Dhat}(iii), \eqref{Dweight3},
\eqref{bo}, \eqref{beta} and \cite[Lemma 4.30]{Zhu2}, we deduce
\begin{eqnarray}
I(z)& \lesssim&  \|f_2\|_{\mathrm{BO}}^p\left(1+ \frac{(1-|z|)^{2\alpha+3}}{\widehat{\omega}(z)\omega(D(z,r))}\int_{\mathbb{D}}\int_{D(z,r)}\frac{|1-\bar{u}\zeta|^{2p\varepsilon}}
{(1-|u|)^{p\varepsilon}}\frac{\omega(u)}{|1-\bar{z}\zeta|^{2\alpha+4}} \mathrm{d}A(u)\frac{\omega(\zeta)}{(1-|\zeta|)^{p\varepsilon}}\mathrm{d}A(\zeta)\right)\nonumber \\
& \asymp & \|f_2\|_{\mathrm{BO}}^p\left(1+
\frac{(1-|z|)^{2\alpha+3-p\varepsilon}}{{\widehat{\omega}(z)}}\int_{\mathbb{D}}\frac{\omega_{[-p\varepsilon]}(\zeta)}{|1-\bar{z}\zeta|^{2\alpha+4-2p\varepsilon}}
\mathrm{d}A(\zeta)\right)\nonumber\\
&\lesssim&\|f_2\|_{\mathrm{BO}}^p\left(1+
\frac{\widehat{\omega_{[-p\varepsilon]}}(z)(1-|z|)^{p\varepsilon}}{{\widehat{\omega}(z)}}\right)\nonumber\\
&\lesssim&
\|f_2\|_{\mathrm{BO}}^p, \quad{z\in \mathbb{D}}.\label{eq3.6}
\end{eqnarray}
Combining \eqref{ba}, \eqref{eq3.6} and the triangle inequality gives $\sup_{z\in\mathbb{D}}B_\omega^\alpha\Big(|f-\widehat{f}_{r,\omega}(z)|^p\Big)(z) <\infty.$

Conversely, suppose there exists $\alpha_0$ such that $\sup_{z\in\mathbb{D}}B_{\omega}^\alpha\Big(|f-\widehat{f}_{r,\omega}(z)|^p\Big)(z)<\infty$ for all $\alpha>\alpha_0$. Let $r_0$ be as in \eqref{Dweight2}. Then for $r\geq r_0$, \eqref{Dweight2} and \eqref{Dweight3} yield
$$
\begin{aligned}
\Big(\mathrm{MO}_{\omega,r}^p(f)(z)\Big)^p
&\asymp
\frac{(1-|z|)^{2\alpha+3}}{{\widehat{\omega}(z)}}\int_{D(z,r)}|f(\zeta)-\widehat{f}_{r,\omega}(z)|^p\frac{\omega(\zeta)}{|1-\bar{z}\zeta|^{2\alpha+4}}\mathrm{d}A(\zeta)
\\
& \lesssim B_{\omega}^\alpha\Big(|f-\widehat{f}_{r,\omega}(z)|^p\Big)(z),\quad z\in\mathbb{D},
\end{aligned}
$$
which shows that $f\in \mathrm{BMO}_{\omega,r}^p$.
The proof is completed.
\end{proof}
\begin{proposition}\label{BMOeta}
Let $\omega\in\mathcal{D}$ and $1\leq p<\infty$. There exists an $\alpha_0=\alpha(\omega,p)$ such that  $$\mathrm{BMO}_{\omega}^p =
\mathcal{ BMO}_{\omega,\alpha}^p, \quad{\forall\, \alpha>\alpha_0.}$$
\end{proposition}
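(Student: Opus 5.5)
The plan is to reduce Proposition \ref{BMOeta} to Proposition \ref{BMOr} by comparing the two centring constants $\widehat{f}_{r,\omega}(z)$ and $B_\omega^\alpha(f)(z)$. Fix $r\geq r_0$ and take $\alpha_0$ at least as large as the constants in Proposition \ref{BMOr} and in \eqref{Dweight3}.

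The basic tool is the following elementary fact. Since $\|k_{\omega,z}^\alpha\|_{A^2_\omega}=1$, the measure $d\mu_z=|k_{\omega,z}^\alpha|^2\omega\,\mathrm{d}A$ is a probability measure. Hence, for any constant $c$, the usual argument (triangle inequality together with Jensen/H\"older for a probability measure) gives
\[
\int_{\mathbb{D}}|f-B_\omega^\alpha(f)(z)|^p\,d\mu_z\lesssim \int_{\mathbb{D}}|f-c|^p\,d\mu_z .
\]
This holds because $B_\omega^\alpha(f)(z)-c=\int (f-c)\,d\mu_z$, so that $|B_\omega^\alpha(f)(z)-c|^p\le \int|f-c|^p\,d\mu_z$. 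Conversely, $\int|f-c|^pd\mu_z\lesssim \int|f-B^\alpha_\omega(f)(z)|^pd\mu_z+|B^\alpha_\omega(f)(z)-c|^p$.

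\emph{Inclusion $\mathrm{BMO}_\omega^p\subset\mathcal{BMO}_{\omega,\alpha}^p$.} Let $f\in\mathrm{BMO}_\omega^p$ and apply the fact above with $c=\widehat{f}_{r,\omega}(z)$. This gives
\[
\big(\mathcal{MO}^p_{\omega,\alpha}(f)(z)\big)^p\lesssim B_\omega^\alpha\big(|f-\widehat f_{r,\omega}(z)|^p\big)(z),
\]
and the right-hand side is bounded uniformly in $z$ by the necessity part of Proposition \ref{BMOr}, for $\alpha>\alpha_0$. Note also that $f\in L^p_\omega$ is part of both definitions, so there is nothing further to check for this inclusion.

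\emph{Inclusion $\mathcal{BMO}_{\omega,\alpha}^p\subset\mathrm{BMO}_\omega^p$.} Let $f\in\mathcal{BMO}^p_{\omega,\alpha}$. We localise to $D(z,r)$. By \eqref{Dweight2} and \eqref{Dweight3}, on $D(z,r)$ we have $|k^\alpha_{\omega,z}|^2\omega\asymp \omega/\omega(D(z,r))$. Therefore
\[
\frac{1}{\omega(D(z,r))}\int_{D(z,r)}|f-B_\omega^\alpha(f)(z)|^p\omega\,\mathrm{d}A\lesssim \big(\mathcal{MO}^p_{\omega,\alpha}(f)(z)\big)^p .
\]
Then, as in the probability-measure fact (now with the normalised measure on $D(z,r)$), replacing the constant $B^\alpha_\omega(f)(z)$ by the mean $\widehat f_{r,\omega}(z)$ costs at most a factor $2^p$. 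This yields $\mathrm{MO}^p_{\omega,r}(f)(z)\lesssim\mathcal{MO}^p_{\omega,\alpha}(f)(z)$ uniformly in $z$, so $f\in\mathrm{BMO}^p_{\omega,r}=\mathrm{BMO}^p_\omega$ by \eqref{BMO1}.

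The only delicate point is the first inclusion, where the decay of the kernel against the $\mathrm{BO}$ part must be controlled. That work has already been done in Proposition \ref{BMOr}, via \eqref{beta} and Lemma \ref{Dweight1}. One should just verify that $\alpha_0$ is chosen so that both \eqref{Dweight3} and Proposition \ref{BMOr} apply, and that $B_\omega^\alpha(f)(z)$ is finite. Finiteness follows because $f\in L^p_\omega$ and $k^\alpha_{\omega,z}$ is bounded for each fixed $z$.
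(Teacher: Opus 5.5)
Your proposal is correct and is essentially the paper's proof. Both arguments compare the centring constants $\widehat f_{r,\omega}(z)$ and $B_\omega^\alpha(f)(z)$ by the triangle/H\"older (Jensen) inequality. For $\mathcal{BMO}_{\omega,\alpha}^p\subset\mathrm{BMO}_\omega^p$ they localise to $D(z,r)$ via \eqref{Dweight2}--\eqref{Dweight3}, and for the reverse inclusion they invoke Proposition \ref{BMOr}. Your explicit use of the probability measure $|k_{\omega,z}^\alpha|^2\omega\,\mathrm{d}A$ only makes the paper's H\"older step transparent and handles $p=1$ uniformly.
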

\begin{proof}
Let $r_0$ be as in \eqref{Dweight2} and $\alpha_0$ as in Proposition \ref{BMOr}.
Let $1<p<\infty$. Suppose $\alpha>\alpha_0$ is such that $f\in \mathcal{ BMO}_{\omega,\alpha}^p$. Then, \eqref{Dweight2} and \eqref{Dweight3} imply
\begin{equation}\label{BMO2}
\begin{aligned}
\Big({\mathcal{ MO}_{\omega,\alpha}^p(f)(z)}\Big)^p
&
\asymp\int_{\mathbb{D}}\left|f(\zeta)-B_\omega^\alpha(f)(z)\right|^p\frac{(1-|z|)^{2\alpha+3}}{\widehat{\omega}(z)|1-\bar{z}{\zeta}|^{2\alpha+4}}\omega(\zeta)\mathrm{d}A(\zeta)
\\
& \gtrsim \frac{1}{\widehat{\omega}(z)(1-|z|)}\int_{D(z,r)}\left|f(\zeta)-B_\omega^\alpha(f)(z)\right|^p\omega(\zeta)\mathrm{d}A(\zeta) \\
& \asymp \frac{1}{\omega(D(z,r))}\int_{D(z,r)}\left|f(\zeta)-B_\omega^\alpha(f)(z)\right|^p\omega(\zeta)\mathrm{d}A(\zeta)
\end{aligned}
\end{equation}
for $r\geq r_0$ and $z\in\mathbb{D}$. By the triangle inequality and H\"{o}lder's inequality, we have
\begin{align}\label{BMO3}
\Big(\mathrm{ MO}_{\omega,r}^p(f)(z)\Big)^p
&\lesssim\frac{1}{\omega(D(z,r))}\int_{D(z,r)}\left|f(\zeta)-B_\omega^\alpha(f)(z)\right|^p\omega(\zeta)\mathrm{d}A(\zeta)+|B_\omega^\alpha(f)(z)-\widehat{f}_{r, \omega}(z)|^p\nonumber\\
&\lesssim \frac{1}{\omega(D(z,r))}\int_{D(z,r)}\left|f(\zeta)-B_\omega^\alpha(f)(z)\right|^p\omega(\zeta)\mathrm{d}A(\zeta),\quad{z\in\mathbb{D}}.
\end{align}
And for the case $p=1$, the last line is obvious.
Combining \eqref{BMO2} with \eqref{BMO3}, for all $r\geq r_0$ we have $\mathcal{ BMO}^p_{\omega,\alpha}\subseteq \mathrm{BMO}^p_{\omega,r}$.
Now we let $f\in  \mathrm{BMO}_{\omega}^p=\mathrm{BMO}_{\omega,r}^p$ with $r\geq r_0$. Then for $1<p<\infty$, the triangle inequality and H\"{o}lder's inequality
yield
$$
\begin{aligned}
\Big(\mathcal{ MO}_{\omega, \alpha}^p(f)(z)\Big)^p
 & \lesssim \int_\mathbb{D}|f(\zeta)-\widehat{f}_{r,\omega}(z)|^p|k_{\omega,z}^\alpha(\zeta)|^2\omega(\zeta)\mathrm{d}A(\zeta)
 +|\widehat{f}_{r,\omega}(z)-B_\omega^\alpha{f}(z)|^p \\
 &\lesssim B_\omega^\alpha\Big(|f-\widehat{f}_{r,\omega}(z)|^p\Big)(z),\quad{z\in\mathbb{D}}.
\end{aligned}
$$
Obviously, the case of $p=1$ is also satisfied.
Then Proposition \ref{BMOr} implies that there exists $\alpha> \alpha_0$ such that $\mathrm{BMO}_{\omega}^p\subseteq \mathcal{ BMO}_{\omega,\alpha}^p$. The proof is finished.
\end{proof}
Now we are ready to prove Theorem \ref{TB} which characterizes the boundedness of Toeplitz operator with $\mathrm{BMO}^p_{\omega}$-symbols via the Berezin-type transform.

\vspace{8pt}
\noindent \textbf{Proof of Theorem \ref{TB}.} The necessity is obvious, we only need to prove the sufficiency.

For $1<p<\infty$, assume $f\in \mathrm{BMO}^p_{\omega}$. Then by Propostion \ref{BMOeta}, there exists $\alpha_0=\alpha(\omega,p)$ such that $f\in \mathcal{ BMO}_{\omega,\alpha}^p$ for $\alpha>\alpha_0$.
The triangle inequality and H\"{o}lder's inequality imply
$$
\begin{aligned}
B_\omega^\alpha(|f|)(z)&=B_{\omega}^\alpha(|f|)(z)-|B_{\omega}^\alpha(f)(z)|+ |B_{\omega}^\alpha(f)(z)|\\
&\leq\mathcal{ MO}_{\omega,\alpha}^p(f)(z)+|B_{\omega}^\alpha(f)(z)|,\quad{z\in \mathbb{D}}.
\end{aligned}
$$
Moreover, for $z\in\mathbb{D}$, from \eqref{Dweight3} we deduce
$$
\begin{aligned}
B_{\omega}^\alpha(|f|)(z)
&\gtrsim \frac{(1-|z|)^{2\alpha+3}}{{\widehat{\omega}(z)}}\int_{D(z,r)}\frac{|f(\zeta)|}{|1-\bar{z}\zeta|^{2\alpha+4}}\omega(\zeta)\mathrm{d}A(\zeta)\\
&\asymp \frac{1}{\widehat{\omega}(z)(1-|z|)}\int_{D(z,r)}|f(\zeta)|\omega(\zeta)\mathrm{d}A(\zeta)\\
& = \frac{\mu(D(z,r))}{\widehat{\omega}(z)(1-|z|)},
\end{aligned}
$$
where $\mathrm{d}\mu(z)=|f(z)|\omega(z)\mathrm{d}A(z)$. By the assumption that $\omega\in \mathcal{D}$ and the boundedness of $B_\omega^\alpha(f)$ on $\mathbb{D}$, it follows that $\sup\limits_{z\in\mathbb{D}}\frac{\mu(D(z,r))}{\widehat{\omega}(z)(1-|z|)}<\infty,$ that is,
$\mu$ is a $p$-Carleson measure for $A_\omega^p$ (see
\cite[Theorem 2]{LR}). By \cite[Theorem 1]{PRS} and \cite[Theorem 1.1]{DGWW2}, the Toeplitz operator $T^\omega_{|f|}$ is bounded on $A_\omega^p$. Clearly, this implies that $T_f^\omega$ is also bounded on $A_\omega^p$.
\qed

\subsection{The proof of Theorem \ref{main1}}
Firstly we recall some notation used in \cite{YZ}. For $k\in\mathbb{N}\cup\{0\}$, let $r_k =$
$1 - {2}^{-k}$ and ${\theta }_{j} = \frac{j2\pi }{{2}^{k}}$ for $j = 0,1,\ldots ,{2}^{k}$. Define
$${S}_{kj} = \left\{  {r{e}^{i\theta } : {r}_{k} \leq  r < {r}_{k + 1},{\theta }_{j} \leq  \theta  < {\theta }_{j + 1}}\right\},$$
and
$${\widetilde{S}}_{kj} = \left\{  {r{e}^{i\theta } : {r}_{k - 1} \leq  r < {r}_{k + 2},{\theta }_{j - 1} \leq  \theta  < {\theta }_{j + 2}}\right\}.$$
Obviously, $S_{kj}\subset{\widetilde{S}}_{kj}$.
To prove Theorem \ref{main1}, we need the following lemmas.
\begin{lemma}\label{b1}
Let $\omega\in\widehat{\mathcal{D}}$ and $h\in H(\mathbb{D})$. Then for $k\in\mathbb{N}\cup\{0\}$ and $j=0,1,...,2^k$, we have
$$
|h(z)|\lesssim\frac{1}{\widehat{\omega}(r_k)(1-r_k)}\int_{\widetilde{S}_{kj}}|h(\zeta)|\widetilde{\omega}(\zeta)\mathrm{d}A(\zeta),
\quad{z\in \overline{S_{kj}}.}
$$
\end{lemma}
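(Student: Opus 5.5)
The plan is to combine the sub-mean value property of $|h|$ on Euclidean discs with the regularity of $\widetilde{\omega}$, which comes from Lemma~\ref{DR} for $\mathcal{D}$ weights. For $\omega\in\widehat{\mathcal{D}}$ we only need the doubling estimate of Lemma~\ref{Dhat}(ii), used directly on $\widehat\omega$.

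Fix $k$ and $j$ and a point $z\in\overline{S_{kj}}$, so that $r_k\le |z|\le r_{k+1}$ and $1-|z|\asymp 2^{-k}=1-r_k$. Set $\delta=c\,2^{-k}$ with a small absolute constant $c$ (say $c=1/8$). I will check that the Euclidean disc $\Delta(z,\delta)$ lies in $\widetilde{S}_{kj}$. Radially, every $\zeta\in\Delta(z,\delta)$ satisfies
\begin{equation*}
r_{k-1}=1-2^{-k+1}< |\zeta|<1-2^{-k-1}+\delta< r_{k+2}=1-2^{-k-2}
\end{equation*}
once $c<1/4$. Angularly, the argument of $\zeta$ differs from that of $z$ by at most about $\delta/|z|\lesssim 2^{-k}$. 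Since $\widetilde{S}_{kj}$ extends one full angular step $2\pi/2^k$ on each side of $S_{kj}$, a small $c$ ensures $\Delta(z,\delta)\subset \widetilde S_{kj}$. For $k=0$ this needs care, since $r_{-1}$ is undefined or negative. I would either read $\widetilde S_{0j}$ as containing the disc $|\zeta|<r_2$ or treat small $k$ separately: the region is then compact in $\D$, and $\widetilde\omega$ is bounded below there.

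By subharmonicity of $|h|$,
\begin{equation*}
|h(z)|\le \frac{1}{\delta^2}\int_{\Delta(z,\delta)}|h(\zeta)|\,\mathrm{d}A(\zeta).
\end{equation*}
On $\Delta(z,\delta)$ we have $1-|\zeta|\asymp 2^{-k}$, so $\widetilde{\omega}(\zeta)=\widehat{\omega}(\zeta)/(1-|\zeta|)$ is comparable to $\widehat{\omega}(r_k)/(1-r_k)$. To see this, use that $\widehat\omega$ is decreasing, which gives $\widehat\omega(\zeta)\le\widehat\omega(r_{k-1})$. Then apply Lemma~\ref{Dhat}(ii), which gives $\widehat\omega(r_{k-1})\lesssim \widehat\omega(r_{k+2})\le \widehat\omega(\zeta)$; these are fixed-ratio steps $(1-r_{k-1})/(1-r_{k+2})=8$. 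Hence $\widehat\omega(\zeta)\asymp \widehat\omega(r_k)$. Therefore
\begin{equation*}
|h(z)|\lesssim \frac{1}{(1-r_k)^2}\cdot\frac{1-r_k}{\widehat{\omega}(r_k)}\int_{\Delta(z,\delta)}|h(\zeta)|\widetilde{\omega}(\zeta)\,\mathrm{d}A(\zeta)\le \frac{1}{\widehat{\omega}(r_k)(1-r_k)}\int_{\widetilde S_{kj}}|h|\widetilde\omega\,\mathrm{d}A,
\end{equation*}
which is the claimed estimate.

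The only delicate point is the geometric containment $\Delta(z,\delta)\subset\widetilde S_{kj}$ uniformly in $k$ and $j$. This includes the angular wrap-around at $j=0$ and $j=2^k$, which is handled by reading the angles modulo $2\pi$, and the degenerate small-$k$ cases. Everything else follows from the monotonicity of $\widehat\omega$ and the doubling property in Lemma~\ref{Dhat}(ii).
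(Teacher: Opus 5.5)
Your proposal is correct and takes essentially the same route as the paper. Both arguments use a Euclidean disc centred at $z$ with radius comparable to $1-r_k$ and contained in $\widetilde{S}_{kj}$, apply the sub-mean value property of $|h|$ on it, and compare $\widehat{\omega}(\zeta)\asymp\widehat{\omega}(r_k)$ there via the monotonicity of $\widehat{\omega}$ and Lemma~\ref{Dhat}(ii); your explicit checks of the containment (radial, angular with wrap-around, and the cases $k=0,1$) are more careful than the paper, which simply asserts the inclusion and misprints the harmless lower bound for $(1-r_k)/(1-|\zeta|)$ as $2$ rather than $1/2$.
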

\begin{proof}
Let $B(z,\frac{1-r_k}{16})$ be the Euclidean disk centered at $z$ with radius $\frac{1-r_k}{16}$. For each $z\in\overline{S_{kj}}$, we have $B(z,\frac{1-r_k}{16})\subset\widetilde{S}_{kj}$. If $\zeta\in B(z,\frac{1-r_k}{16})$, then $r_{k-1}\leq |\zeta|<r_{k+2}$ and thus
\begin{equation}\label{eq3.9}
2\leq \frac{1-r_k}{1-|\zeta|}\leq 4.
\end{equation}
Moreover, since $\omega\in\widehat{\mathcal{D}}$, for $\zeta\in B(z,\frac{1-r_k}{16})$ we have
\begin{equation}\label{eq3.10}
\widehat{\omega}(\zeta)\leq \widehat{\omega}(r_{k-1}) \lesssim \widehat{\omega}\bigg(\frac{1+r_{k-1}}{2}\bigg)=\widehat{\omega}(r_{k}).
\end{equation}
On the other hand, an application of Lemma \ref{Dhat}(ii) shows
\begin{equation}\label{eq3.11}
\widehat{\omega}(r_{k})\leq\widehat{\omega}(r_{k-1})\lesssim \bigg(\frac{1-r_{k-1}}{1-|\zeta|}\bigg)^{\beta}\widehat{\omega}(\zeta)
<8^\beta\widehat{\omega}(\zeta),\quad \forall \, \zeta\in B\big(z,\frac{1-r_k}{16}\big).
\end{equation}
By the subharmonicity of $|h|$, \eqref{eq3.9}, \eqref{eq3.10} and \eqref{eq3.11}, we have
\begin{align*}
|h(z)|
&\leq\frac{1}{A(B(z,\frac{1-r_k}{16}))}\int_{B(z,\frac{1-r_k}{16})}|h(\zeta)|\mathrm{d}A(\zeta) \\
&\asymp\frac{(1-r_k)}{\widehat{\omega}(r_k)A(B(z,\frac{1-r_k}{16}))}\int_{B(z,\frac{1-r_k}{16})}|h(\zeta)|\frac{\widehat{\omega}(\zeta)}{1-|\zeta|}\mathrm{d}A(\zeta)\\
&\lesssim \frac{1}{\widehat{\omega}(r_k)(1-r_k)}\int_{\widetilde{S}_{kj}}|h(\zeta)|\widetilde{\omega}(\zeta)\mathrm{d}A(\zeta),\quad \forall\,z\in\overline{S_{kj}},
\end{align*}
which completes the proof.
\end{proof}

The next lemma comes from \cite{YZ}.
\begin{lemma}\label{b2}
For any $z\in\mathbb{D}$, we have
$$
\sum_{k=0}^\infty\sum_{j=0}^{2^k-1}\chi_{\widetilde{S}_{kj}}\leq 9,
$$
where $\chi_{\widetilde{S}_{kj}}$ is the characteristic function on the set $\widetilde{S}_{kj}$.
\end{lemma}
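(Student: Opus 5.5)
The plan is to count directly. Fix $z=\rho e^{i\theta}\in\mathbb{D}$. The point is that $z\in\widetilde{S}_{kj}$ imposes two independent conditions: a radial one that involves only $k$, and an angular one that involves $j$ once $k$ is fixed. I will show that at most $3$ values of $k$ pass the radial test, and that for each such $k$ at most $3$ values of $j$ pass the angular test. The bound $3\cdot 3=9$ then follows.

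\emph{Radial count.} Since $r_k=1-2^{-k}$ increases to $1$, with $r_0=0$ (and $r_{-1}=-1$ harmlessly), there is a unique $m\in\mathbb{N}\cup\{0\}$ with $r_m\le\rho<r_{m+1}$. The condition $r_{k-1}\le\rho<r_{k+2}$ requires $r_{k-1}\le \rho<r_{m+1}$, so $k-1\le m$. It also requires $r_m\le\rho<r_{k+2}$, so $m<k+2$, that is, $m+1\le k+2$. Hence $k\in\{m-1,m,m+1\}$, which gives at most three admissible values of $k$.

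\emph{Angular count for fixed $k$.} I read the angles modulo $2\pi$, so $\theta_{-1}$ and $\theta_{2^k+1}$ are understood cyclically. Put $t=2^k\theta/(2\pi)$. The condition $\theta_{j-1}\le\theta<\theta_{j+2}$ becomes $j-1\le t<j+2$ modulo $2^k$, that is, $j\in(t-2,t+1]$. This half-open interval has length $3$, so it contains at most $3$ integers, and distinct residues mod $2^k$ give at most $3$ admissible $j\in\{0,\dots,2^k-1\}$. When $2^k<3$, i.e.\ $k=0,1$, the index set itself has at most $2$ elements, so the bound still holds.

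Multiplying the two counts gives $\sum_{k,j}\chi_{\widetilde{S}_{kj}}(z)\le 9$. The only delicate points are bookkeeping. First, the endpoint conventions (half-open intervals) must be respected so that exactly three radial shells are counted. Second, the wrap-around in $j$ near $\theta=0$ must be handled so that no index is counted twice; reading the angles cyclically as above takes care of this.
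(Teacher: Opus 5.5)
Your proof is correct. The radial condition $r_{k-1}\le|z|<r_{k+2}$ leaves only $k\in\{m-1,m,m+1\}$. For each such $k$, the angular condition determines $j$ modulo $2^k$ up to the at most three integers in $(t-2,t+1]$. Together these give the bound $3\cdot 3=9$.

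The paper gives no argument here; it simply imports the lemma from Yan and Zheng \cite{YZ}. Your count is therefore a self-contained substitute for that citation rather than a competing method.

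Reading the angles modulo $2\pi$ is also the right choice for the paper's purposes. The proof of Lemma \ref{b1} uses $B(z,\frac{1-r_k}{16})\subset\widetilde{S}_{kj}$ for all $z\in\overline{S_{kj}}$, including $j=0$ and $j=2^k-1$. That inclusion holds only under the cyclic interpretation. Restricting arguments to $[0,2\pi)$ instead would only shrink the sets, so your bound covers that convention as well.
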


Based on some ideas in \cite{TV1}, we now give the proof of Theorem \ref{main1}.
	
\vspace{5pt}
\noindent \textbf{Proof of Theorem \ref{main1}(i).}
Let
\begin{equation}\label{epsilonk}
\epsilon_k=\sup\Big\{\big|{\bf M}(f,r,\theta,\tau)\big|:r_k\leq r \leq r_{k+1} ,\,\,\theta\in[0,2\pi],\,\, 0<\tau\leq1\Big\}.
\end{equation}
Let $C$ denote the supremum in \eqref{A1}. By the assumption \eqref{A1}, $C$ is finite. For each nonegative integer $k$ we have $\epsilon_k\leq C$. Let $1<p<\infty$ with $1/p+1/p'=1$. By Fubini's theorem and the duality, it suffices to show that
$$
|\langle T_f^\omega(g),h\rangle_{L_\omega^2}|=\left|\int_{\mathbb{D}} f(z) g(z) \overline{h(z)}\omega(z) \mathrm{d}A(z)\right| \lesssim \|g\|_{A_\omega^p}
\|h\|_{A_\omega^{p'}}
$$
for $g, h \in H^{\infty}.$ To do so, let $b$ denote the product of $g$ and $\bar{h}$. According to the definition of ${S}_{kj}$, $\mathbb D$ admits the decomposition $\mathbb{D}
= \mathop{\bigcup }\limits_{{k = 0}}^{\infty }\mathop{\bigcup }\limits_{{j = 0}}^{{{2}^{k} - 1}}{S}_{kj}$. As a result, we have
$$
\int_{\mathbb{D}}f(z)g(z)\overline{h(z)}\omega(z)\mathrm{d}A(z)=\sum_{k=0}^{\infty} \sum_{j=0}^{2^k-1} \int_{S_{k j}}f(z)b(z)\omega(z)\mathrm{d}A(z).
$$
Using integration by parts, we get
\begin{eqnarray}\label{Aa}
&&\int_{S_{k j}}f(z)b(z)\omega(z)\mathrm{d}A(z)\\
&=&  \int_{r_k}^{r_{k+1}}\int_{\theta_j}^{\theta_{j+1}} r\omega(r) f(r e^{i \theta}) \mathrm{d} \theta b(r e^{i \theta_{j+1}}) \mathrm{d}r
-\int_{r_k}^{r_{k+1}} \int_{\theta_j}^{\theta_{j+1}}\bigg[\int_{\theta_j}^\theta r\omega(r) f(r e^{i \eta}) \mathrm{d}\eta\bigg]\bigg[\frac{\partial}{\partial \theta} b(r
e^{i \theta})\bigg]\mathrm{d}\theta \mathrm{d}r.\nonumber
\end{eqnarray}

Using integration by parts for the variable $r$, we obtain

$$
\int_{r_k}^{r_{k+1}}  {\bigg[\int_{\theta_j}^{\theta_{j+1}} r\omega(r) f(r e^{i \theta}) \mathrm{d} \theta\bigg] b(r e^{i \theta_{j+1}}) \mathrm{d} r } \\
=I_{k j}^1-I_{k j}^2,
$$
where $$
I_{k j}^1=\bigg[\int_{r_k}^{r_{k+1}} \int_{\theta_j}^{\theta_{j+1}} r\omega(r) f(r e^{i \eta}) \mathrm{d} \eta \mathrm{d} r\bigg] b(r_{k+1} e^{i
\theta_{j+1}}),
$$ and
$$
I_{k j}^2=\int_{r_k}^{r_{k+1}}\bigg[\int_{r_k}^r \int_{\theta_j}^{\theta_{j+1}} t\omega(t) f(t e^{i \eta}) \mathrm{d} \eta \mathrm{d}
t\bigg]\bigg[\frac{\partial}{\partial r} b(r e^{i \theta_{j+1}})\bigg] \mathrm{d} r.
$$
Applying Fubini's theorem and integration by parts for the variable $r$ gives

$$
\begin{aligned}
& \int_{r_k}^{r_{k+1}} \int_{\theta_j}^{\theta_{j+1}}\bigg[\int_{\theta_j}^\theta r\omega(r) f(r e^{i \eta}) \mathrm{d}
\eta\bigg]\bigg[\frac{\partial}{\partial \theta} b(r e^{i \theta})\bigg] \mathrm{d} \theta \mathrm{d} r \\
=& \int_{\theta_j}^{\theta_{j+1}} \int_{r_k}^{r_{k+1}}\bigg[\int_{\theta_j}^\theta r\omega(r) f(r e^{i \eta}) \mathrm{d}
\eta\bigg]\bigg[\frac{\partial}{\partial \theta} b(r e^{i \theta})\bigg] \mathrm{d} r \mathrm{d} \theta \\
=& \int_{\theta_j}^{\theta_{j+1}}\bigg[\int_{r_k}^{r_{k+1}} \int_{\theta_j}^\theta t\omega(t) f\left(t e^{i \eta}\right) \mathrm{d} \eta \mathrm{d}
t\bigg]\bigg[\frac{\partial}{\partial \theta} b(r_{k+1} e^{i \theta})\bigg] \mathrm{d} \theta \\
&-\int_{\theta_j}^{\theta_{j+1}} \int_{r_k}^{r_{k+1}}\bigg[\int_{r_k}^r \int_{\theta_j}^\theta t\omega(t) f(t e^{i \eta}) \mathrm{d} \eta \mathrm{d}
t\bigg]\bigg[\frac{\partial^2}{\partial r \partial \theta} b(r e^{i \theta})\bigg] \mathrm{d} r \mathrm{d} \theta.
\end{aligned}
$$
Then the integral in \eqref{Aa} can be rewritten as
$$
\int_{S_{k j}} f(z) b(z) \mathrm{d} A(z)=I_{k j}^1-I_{k j}^2-I_{k j}^3+I_{k j}^4,
$$
where
$$
 I_{k j}^3=\int_{\theta_j}^{\theta_{j+1}}\bigg[\int_{r_k}^{r_{k+1}} \int_{\theta_j}^\theta t\omega(t) f\left(t e^{i \eta}\right) \mathrm{d} \eta \mathrm{d}
t\bigg]\bigg[\frac{\partial}{\partial \theta} b(r_{k+1} e^{i \theta})\bigg] \mathrm{d} \theta,
$$
and
$$
 I_{k j}^4=\int_{\theta_j}^{\theta_{j+1}} \int_{r_k}^{r_{k+1}}\bigg[\int_{r_k}^r \int_{\theta_j}^\theta t\omega(t) f(t e^{i \eta}) \mathrm{d} \eta
\mathrm{d} t\bigg]\bigg[\frac{\partial^2}{\partial r \partial \theta} b(r e^{i \theta})\bigg] \mathrm{d} r \mathrm{d} \theta.
$$
In the following, we proceed to estimate each of them.
For $I_{k j}^1$, we have
$$
\begin{aligned}
\left|I_{k j}^1\right|
&\leq \bigg(\bigg|\int_{r_k}^1 \int_{\theta_j}^{\theta_{j+1}} r\omega(r) f(r e^{i \eta}) \mathrm{d} \eta \mathrm{d}
r\bigg|
 +\bigg|\int_{r_{k+1}}^1 \int_{\theta_j}^{\theta_{j+1}} r\omega(r) f(r e^{i \eta}) \mathrm{d} \eta \mathrm{d} r\bigg|\bigg)\left|b(r_{k+1} e^{i
\theta_{j+1}})\right|\\
&=\bigg(\bigg|\int_{E_{jk}^1}f(\zeta)\omega(\zeta)\mathrm{d}A(\zeta)\bigg|+\bigg|\int_{E_{jk}^2}f(\zeta)\omega(\zeta)\mathrm{d}A(\zeta)\bigg|\bigg)\left|b(r_{k+1} e^{i
	\theta_{j+1}})\right|,
\end{aligned}
$$
where
$E_{jk}^1=S_{1-{r_k}}^1(e^{i \frac{\theta_{j+1}+\theta_j}{2}})$,
$E_{jk}^2=S_{1-r_{k+1}}^1(e^{i \eta_{0}}) \cup S_{1-r_{k+1}}^1(e^{i \eta_{1}})$
and $\eta_{0}, \eta_{1} \in[\theta_j,
\theta_{j+1}]$ are chosen such that $S_{1-r_{k+1}}^1(e^{i\eta_{0}})$ and $S_{1-r_{k+1}}^1(e^{i \eta_{1}})$ are disjoint. Then the above inequality together with the definition of $\epsilon_k$ gives
\begin{eqnarray}\label{A2}
	\left|I_{k j}^1\right|
 &\leq&\epsilon_k\big[\widehat{\omega}(r_k)(1-r_k)+2\widehat{\omega}(r_{k+1})(1-r_{k+1})\big]\left|b(r_{k+1} e^{i\theta_{j+1}})\right|\nonumber\\
 &\asymp&\epsilon_k\widehat{\omega}(r_k)(1-r_k)\left|b(r_{k+1} e^{i\theta_{j+1}})\right|.
\end{eqnarray}
Since $b=g \bar{h}$ and $g, h \in H^{\infty}$, it follows from Lemma \ref{b1} that
\begin{equation}\label{A3}
\left|b\left(r_{k+1} e^{i \theta_{j+1}}\right)\right| \lesssim \frac{1}{\widehat{\omega}(r_k)(1-r_k)} \int_{\tilde{S}_{k
j}}|b(z)|\widetilde{\omega}(z) \mathrm{d} A(z).
\end{equation}
Combining \eqref{A2} with \eqref{A3}, we obtain
$$
\left|I_{k j}^1\right| \lesssim \epsilon_k \int_{\tilde{S}_{k j}}|b(z)|\widetilde{\omega}(z) \mathrm{d} A(z).
$$
Next we consider $I_{k j}^2$. Rewrite $I_{kj}^2$ as
$$
I_{k j}^2=\int_{r_k}^{r_{k+1}}\left[\int_{r_k}^1 \int_{\theta_j}^{\theta_{j+1}} t\omega(t) f(t e^{i \eta}) \mathrm{d} \eta \mathrm{d} t-\int_r^1
\int_{\theta_j}^{\theta_{j+1}} t\omega(t) f(t e^{i \eta}) \mathrm{d} \eta \mathrm{d} t\right] \frac{\partial}{\partial r}\left[b(r e^{i \theta_{j+1}})\right]
\mathrm{d} r.
$$
Note that in the above second inner integral
\begin{equation}\label{A4}
\int_r^1 \int_{\theta_j}^{\theta_{j+1}}t\omega(t) f(t e^{i \eta}) \mathrm{d} \eta \mathrm{d} t,
\end{equation}
we have $r_k \leq r \leq r_{k+1}$ and $\theta_{j+1}-\theta_j=2 \pi / 2^k$. Thus the domain of this integral, denoted by $E_{jk}^3(r)$, can be written as the union of the disjoint sets
$S_{1-r}^{\tau_{0,r}}(e^{i \eta_{0,r}})$ and $S_{1-r}^{\tau_{1,r}}(e^{i \eta_{1,r}})$ for some $0<\tau_{0,r},\tau_{1,r}\leq 1$ and
$\eta_{0,r},\eta_{1,r} \in[\theta_j, \theta_{j+1}]$.
Consequently, the same reasoning as in \eqref{A2} shows that
$$
\begin{aligned}
&\left|\int_{r_k}^1 \int_{\theta_j}^{\theta_{j+1}}t\omega(t) f(t e^{i \eta}) \mathrm{d} \eta \mathrm{d} t\right|
+\left|\int_r^1 \int_{\theta_j}^{\theta_{j+1}}t\omega(t) f(t e^{i \eta}) \mathrm{d} \eta \mathrm{d} t\right|\\
=&\left|\int_{E_{jk}^1} f(z)\omega(z) \mathrm{d} A(z)\right|
+\left|\int_{E_{jk}^3} f(z)\omega(z) \mathrm{d} A(z)\right| \\
\leq &\epsilon_k \widehat{\omega}(r_k)(1-r_k)+ 2 \epsilon_k \widehat{\omega}(r)(1-r).
\end{aligned}
$$
Since $\widehat{\omega}(r)\asymp \widehat{\omega}(r_k)$ for $r\in[r_k,r_{k+1}]$, we obtain
$$
\left|I_{k j}^2\right| \lesssim \epsilon_k \widehat{\omega}(r_k)(1-r_k) \int_{r_k}^{r_{k+1}}\left|\frac{\partial}{\partial r}\left[b(r e^{i
\theta_{j+1}})\right]\right| \mathrm{d} r.
$$
Moreover, since $b(z)=g(z) \overline{h(z)}$, we get
\begin{equation}\label{A5}
\left|\frac{\partial}{\partial r}\left[b(r e^{i \theta_{j+1}})\right]\right|
\leq\left|g^{\prime}(r e^{i \theta_{j+1}}) h(r e^{i \theta_{j+1}})\right|+\left|g(r
e^{i \theta_{j+1}}) h^{\prime}(r e^{i \theta_{j+1}})\right|.
\end{equation}
For any $z \in \widetilde{S}_{k j}$, we have
\begin{equation}\label{skj}
1-|z|^2 \asymp r_{k+1}-r_k \asymp \theta_{j+1}-\theta_j.
\end{equation}
Lemma \ref{b1} together with \eqref{A5} and \eqref{skj} yields
$$
\left|I_{k j}^2\right| \lesssim \epsilon_k \int_{\tilde{S}_{k j}} F(z) \widetilde{\omega}(z)\mathrm{d} A(z),
$$
where $F(z)=|g^{\prime}(z)|(1-|z|^2) |h(z)|+|g(z)| |h^{\prime}(z)|(1-|z|^2)$.

Now we turn to estimate $I_{kj}^3$. Rewrite $I_{k j}^3$ as
$$
\int_{\theta_j}^{\theta_{j+1}}\bigg[\int_{r_k}^{1} \int_{\theta_j}^\theta t\omega(t) f(t e^{i \eta}) \mathrm{d} \eta \mathrm{d} t-\int_{r_{k+1}}^{1}
\int_{\theta_j}^\theta t\omega(t) f(t e^{i \eta}) \mathrm{d} \eta \mathrm{d} t\bigg]\bigg[\frac{\partial}{\partial \theta} b(r_{k+1} e^{i
\theta})\bigg] \mathrm{d} \theta,
$$
and rewrite the above first inner integral
$$
\int_{r_k}^1 \int_{\theta_j}^\theta t\omega(t) f(t e^{i \eta}) \mathrm{d} \eta \mathrm{d} t
=\int_{S_{1-r_k}^{\beta_{\theta}}(e^{i\frac{\theta+\theta_j}{2}})} f(\zeta)\omega(\zeta)\mathrm{d}A(\zeta),
$$
 where
$\beta_{\theta}=\frac{2^{k-1}(\theta-\theta_j)}{\pi}$.
It follows from the definition of $\epsilon_k$ that
\begin{equation}\label{A6}
	\left|\int_{r_k}^1 \int_{\theta_j}^\theta t\omega(t) f\left(t e^{i \eta}\right) \mathrm{d} \eta\mathrm{d} t\right|
	\leq \epsilon_k \widehat{\omega}(r_k)(1-r_k).
\end{equation}
Note that $\theta_j\leq\theta\leq\theta_{j+1}$ in the second inner integral, we estimate $$\int_{r_{k+1}}^{1} \int_{\theta_j}^\theta t\omega(t) f(t e^{i \eta}) \mathrm{d} \eta
\mathrm{d} t$$
in following two cases: $2\pi/2^{k+1}<\theta-\theta_j\leq2\pi/2^k$ and $0\leq\theta-\theta_j\leq2\pi/2^{k+1}$.

If $2\pi/2^{k+1}<\theta-\theta_j\leq2\pi/2^k$, then the domain of the integral can be seen as $E_{jk}^4(\theta)$, where $E_{jk}^4(\theta)$ is the union of disjoint
$S_{1-r_{k+1}}^{\beta_{0,\theta}}(e^{i \eta_0})$ and $S_{1-r_{k+1}}^{\beta_{1,\theta}}(e^{i \eta_1})$ for some $0<\beta_{0,\theta}$, $\beta_{1,\theta} \leq 1$ and
$\eta_0$, $\eta_1\in[\theta, \theta_{j+1}]$.
In this case, we have
$$
\left|\int_{r_{k+1}}^{1} \int_{\theta_j}^\theta t\omega(t) f(t e^{i \eta}) \mathrm{d} \eta
\mathrm{d} t\right|
=\left|\int_{E_{jk}^4}f(z)\omega(z)\mathrm{d}A(z)\right|\lesssim \epsilon_k\widehat{\omega}(r_{k})(1-r_{k}).
$$
If $0<\theta-\theta_j\leq2\pi/2^{k+1}$, then
$$
\left|\int_{r_{k+1}}^{1} \int_{\theta_j}^\theta t\omega(t) f(t e^{i \eta}) \mathrm{d} \eta \mathrm{d} t\right|
=\left|\int_{S_{1-r_{k+1}}^{\gamma_{\theta}}(e^{i \frac{\theta+\theta_j}{2}})}f(z)\omega(z)\mathrm{d}A(z)\right|
\lesssim \epsilon_k\widehat{\omega}(r_{k})(1-r_{k}),
$$
where $\gamma_{\theta}=\frac{2^{k}(\theta-\theta_j)}{\pi}$.

Moreover, we also have
$$
\left|\frac{\partial}{\partial \theta} b(r_{k+1} e^{i \theta})\right|\leq\left|g^{\prime}(r_{k+1} e^{i \theta}) h(r_{k+1} e^{i \theta})\right|+\left|g(r_{k+1} e^{i \theta}) h^{\prime}(r_{k+1} e^{i \theta})\right|.
$$
Similarly, the above inequality together with Lemma \ref{b1} and \eqref{skj} shows that
$$
\left|I_{k j}^3\right|
\lesssim \epsilon_k \int_{\tilde{S}_{k j}}F(z)\widetilde{\omega}(z)\mathrm{d} A(z).
$$
For $I_{k j}^4$, we write
$$
\begin{aligned}
I_{k j}^4= & \int_{\theta_j}^{\theta_{j+1}} \int_{r_k}^{r_{k+1}}\bigg[\int_{r_k}^1 \int_{\theta_j}^\theta t\omega(t) f(t e^{i \eta}) \mathrm{d} \eta
\mathrm{d} t-\int_r^1 \int_{\theta_j}^\theta t\omega(t) f(t e^{i \eta}) \mathrm{d} \eta \mathrm{d} t\bigg] \\
& \times\bigg[\frac{\partial^2}{\partial r \partial \theta} b\left(r e^{i \theta}\right)\bigg] \mathrm{d} r \mathrm{d} \theta.
\end{aligned}
$$
Similarly to the estimation of \eqref{A4}, we get
\begin{equation}\label{A7}
\left|\int_{r}^1 \int_{\theta_j}^\theta t\omega(t) f\left(t e^{i \eta}\right) \mathrm{d} \eta\mathrm{d} t\right| \lesssim \epsilon_k
\widehat{\omega}(r)(1-r)\asymp \epsilon_k \widehat{\omega}(r_k)(1-r_k) .
\end{equation}
Since $b(z)=g(z) \overline{h(z)}$, it follows that
\begin{eqnarray}\label{A8}
\left|\frac{\partial^2}{\partial r\partial \theta} b(r e^{i \theta})\right|
&\leq & \left|g^{\prime}(r e^{i \theta}) h(r e^{i \theta})\right|+\left|g(r e^{i \theta}) h^{\prime}(r e^{i \theta})\right|
 +2\left|g^{\prime}(r e^{i \theta}) h^{\prime}(r e^{i \theta})\right| \nonumber\\
 &+&\left|g^{\prime \prime}(r e^{i \theta}) h(r e^{i \theta})\right|
 +\left|g(r e^{i \theta}) h^{\prime \prime}(r e^{i \theta})\right|.
\end{eqnarray}
Then Lemma \ref{b1} combined with \eqref{skj}, \eqref{A6}, \eqref{A7} and
\eqref{A8} implies
$$
\left|I_{k j}^4\right| \lesssim \epsilon_k \int_{\tilde{S}_{k j}}\big(F(z)+G(z)\big)\widetilde{\omega}(z)\mathrm{d} A(z),
$$
where $G(z)=\left|g^{\prime}(z) h^{\prime}(z)(1-|z|^2)^2\right|+\left|g^{\prime \prime}(z)(1-|z|^2)^2 h(z)\right| +\left|g(z) h^{\prime
\prime}(z)(1-|z|^2)^2\right|$.
Combining the estimation for $I_{k j}^1, I_{k j}^2, I_{k j}^3$ and $I_{k j}^4$ together, we deduce
\begin{equation}\label{A9}
\left|\int_{S_{k j}} f(z) b(z) \omega(z)\mathrm{d} A(z)\right|
\lesssim \epsilon_k \int_{\tilde{S}_{k j}} \bigg(|g(z)||h(z)|+F(z)+G(z)\bigg) \widetilde{\omega}(z) \mathrm{d} A(z).
\end{equation}
By \eqref{RD}, \eqref{A9}, Lemma \ref{b2} and H\"{o}lder's inequality,  we get
$$
\begin{aligned}
&\left|\int_{\mathbb{D}} f(z) g(z) \overline{h(z)} \omega(z)\mathrm{d} A(z)\right|\\
&\lesssim \sum_{k=0}^{\infty} \sum_{j=0}^{2^k-1} \int_{\tilde{S}_{k j}} \bigg(|g(z)||h(z)|+F(z)+G(z)\bigg)\widetilde{\omega}(z) \mathrm{d} A(z)
\\
&\lesssim \int_{\mathbb{D}}|g(z)||h(z)|\omega(z) \mathrm{d} A(z)
+\int_{\mathbb{D}}\big(F(z)+G(z)\big)\widetilde{\omega}(z) \mathrm{d} A(z) \\
& \leq \|g\|_{A_\omega^p}\|h\|_{A_\omega^{p'}}+\int_{\mathbb{D}} F(z)\widetilde{\omega}(z) \mathrm{d}A(z)+\int_{\mathbb{D}} G(z)\widetilde{\omega}(z)
\mathrm{d}A(z).
\end{aligned}
$$
Moreover, since $\omega\in\mathcal{D}$, it follows from Lemma \ref{DR} that $\widetilde{\omega}\in \mathcal{R}$. Then H\"{o}lder's inequality combined with \eqref{Littlewood} and \eqref{RD} shows that
\begin{eqnarray}\label{A10}
\int_{\mathbb{D}} F(z)\widetilde{\omega}(z) \mathrm{d}A(z)
&\leq& \left(\int_{\mathbb{D}} |g^{\prime}(z)|^p(1-|z|)^p \widetilde{\omega}(z)\mathrm{d}A(z)\right)^{1/p} \|h\|_{A_{\widetilde{\omega}}^{p'}}\nonumber\\
&+&\|g\|_{A_{\widetilde{\omega}}^{p}}\left(\int_{\mathbb{D}} |h^{\prime}(z)|^{p'}(1-|z|)^{p'}\widetilde{\omega}(z)\mathrm{d}A(z)\right)^{1/p'}\nonumber\\
&\lesssim &\|g\|_{A_\omega^p}\|h\|_{A_\omega^{p'}}.
\end{eqnarray}
We can also get $\int_{\mathbb{D}} G(z)\widetilde{\omega}(z)\mathrm{d}A(z)\lesssim \|g\|_{A_\omega^p}\|h\|_{A_\omega^{p'}}$ in this way.
It follows that
$$
\left|\int_{\mathbb{D}} f(z) g(z) \overline{h(z)} \omega(z)\mathrm{d} A(z)\right|\lesssim \|g\|_{A_\omega^p}\|h\|_{A_\omega^{p'}}, \quad{g,h\in H^\infty}.
$$
Since $H^\infty$ is dense in $A_\omega^p$, a standard density argument yields $\|T_f^\omega\|_{A_\omega^p}\lesssim
1$, which completes the proof.
\hfill$\Box$

\hspace{10pt}

Next, we prove the second part of Theorem \ref{main1}.

\vspace{5pt}
\noindent \textbf{Proof of Theorem \ref{main1}(ii).}
Suppose that $f$ satisfies \eqref{B0}. Then it follows from the definition of $\epsilon_k$ in \eqref{epsilonk} that for any $\varepsilon>0$, there is a positive integer $m$ such that $\epsilon_k<\varepsilon$ whenever $k
\geq m$.
Let $\{g_n\}_{n=1}^\infty$ be any sequence in $A_\omega^p$ such that $g_n$ converges to 0 uniformly on any compact subset of $\mathbb{D}$,
and $\left\|g_n\right\|_{A_\omega^p} \leq K$ for some positive constant $K$. To show that $T_f^\omega$ is compact on $A_\omega^p$, we only need to show that
$\big\|T_f^\omega g_n\big\|_{A_\omega^p} \rightarrow 0$ as $n \rightarrow \infty$.

Let $h \in H^{\infty}$. Similarly to the proof of Theorem \ref{main1}(i),
we obtain
\begin{equation}\label{A11}
\left|\int_{\mathbb{D}} f(z) g_n(z) \overline{h(z)} \omega(z)\mathrm{d} A(z)\right| \lesssim \sum_{k=0}^{\infty} \sum_{j=0}^{2^k-1} \epsilon_k
\int_{\tilde{S}_{k j}} F_n(z) \widetilde{\omega}(z) \mathrm{d} A(z),
\end{equation}
where
\begin{align*}
F_n(z)= & \left|g_n(z) h(z)\right|+\left|g_n^{\prime}(z)\left(1-|z|^2\right) h(z)\right|
 +\left|g_n(z) h^{\prime}(z)\left(1-|z|^2\right)\right|\\
 &+\left|g_n^{\prime}(z) h^{\prime}(z)\left(1-|z|^2\right)^2\right|+\left|g_n^{\prime \prime}(z)\left(1-|z|^2\right)^2 h(z)\right|\\
 &+\left|g_n(z) h^{\prime \prime}(z)\left(1-|z|^2\right)^2\right|.
\end{align*}
We split the summation in \eqref{A11} into the following two parts:
$$
\begin{aligned}
\left|\int_{\mathbb{D}} f(z) g_n(z) \overline{h(z)}\omega(z) \mathrm{d} A(z)\right|
\lesssim I_{m,n}^1+I_{m,n}^2,
\end{aligned}
$$
where
$$
I_{m,n}^1=\sum_{k=0}^{m-1} \sum_{j=0}^{2^k-1} \epsilon_k \int_{\tilde{S}_{k j}} F_n(z) \widetilde{\omega}(z) \mathrm{d} A(z),\quad
I_{m,n}^2=\sum_{k=m}^{\infty} \sum_{j=0}^{2^k-1} \epsilon_k \int_{\tilde{S}_{k j}} F_n(z)  \widetilde{\omega}(z)\mathrm{d} A(z).
$$
Since $g_n$ is holomorphic, by Cauchy's integral formula the sequences $g_n^{\prime}$ and $g_n^{\prime \prime}$ also converge to 0
uniformly on any compact subset of $\mathbb{D}$.
Then H\"{o}lder's inequality together with \eqref{Littlewood} and \eqref{RD} implies that there is a positive integer $N$ such that

$$
\int_{\{z \in \mathbb{D}:|z| \leq 1-\frac{1}{2^{m+1}}\}} F_n(z) \widetilde{\omega}(z)\mathrm{d} A(z)
\lesssim \widetilde{\omega}(\mathbb{D})^{1/p}\|h\|_{A_\omega^{p'}} \varepsilon, \quad \forall n>N .
$$
Note that
$$
\bigcup_{k=0}^{m-1} \bigcup_{j=0}^{2^k-1} \widetilde{S}_{k j} \subset\left\{z \in \mathbb{D}:|z| \leq 1-\frac{1}{2^{m+1}}\right\}.
$$
It follows that
$$
I_{m,n}^1
\lesssim \max \limits_{\left\{0 \leq k \leq m-1\right\}} \epsilon_k\int_{\{z \in \mathbb{D}:|z| \leq 1-\frac{1}{2^{m+1}}\}} F_n(z) \widetilde{\omega}(z)\mathrm{d} A(z)
\lesssim\|h\|_{A_\omega^{p'}} \varepsilon, \quad \forall n>N .
$$
Moreover, since $\epsilon_k<\varepsilon$ whenever $k \geq m$, in the same way as \eqref{A10}, we deduce
$$
I_{m,n}^2  <\varepsilon \sum_{k=m}^{\infty} \sum_{j=0}^{2^k-1} \int_{\tilde{S}_{k j}} F_n(z) \widetilde{\omega}(z) \mathrm{d} A(z)
 \lesssim \varepsilon\left\|g_n\right\|_{A_\omega^p}\|h\|_{A_\omega^{p'}} \leq K\|h\|_{A_\omega^{p'}} \varepsilon.
$$
Combining the estimations of $I_{m,n}^1$ and $I_{m,n}^2$, we conclude that for any $\varepsilon>0$ there is a positive integer $N$ such that
$$
\left|\int_{\mathbb{D}} f(z) g_n(z) \overline{h(z)}\omega(z) \mathrm{d} A(z)\right| \lesssim\|h\|_{A_\omega^{p'}} \varepsilon, \quad \forall n>N.
$$
Thus $\big\|T_f^\omega g_n\big\|_{A_\omega^p}$ converges to 0 as $n \rightarrow \infty$, which completes the proof of this theorem.
\qed

\vspace{.3cm}
In \cite{TV1}, Taskinen and Virtanen gave a sufficient condition via the half Carleson square for the boundedness of Toeplitz operator with locally integrable symbols. Taskinen and Virtanen's condition can also be extended to the Bergman space with $\mathcal D$ weight. Indeed, using a similar idea in \cite[Theorem 3.2]{YZ}, we can show that the condition \eqref{A1} in Theorem \ref{main1} is equivalent to
$$
C_f:=\mathlarger{\mathlarger{\sup}}_{\substack{m \in \mathbb{N}^{+},\mu=0,1, \ldots, 2^m-1\\	 1-\frac{1}{2^{m-1}}\leq \rho <1-\frac{1}{2^m}\\ \frac{2\pi\mu}{2^{m}}\leq \theta <\frac{2\pi(\mu+1)}{2^{m}}}}
\frac{2^{m}}{\widehat{\omega}(1-\frac{1}{2^m})}\left|\int_{1-\frac{1}{2^{m-1}}}^{\rho}
\int_{\frac{2\pi\mu}{2^m}}^\theta f(re^{i\eta})r\omega(r)\mathrm{d}\eta \mathrm{d}r\right|<\infty.
$$
Combining this and Theorem \ref{main1} gives the following theorem.

\begin{theorem}
Let $\omega\in\mathcal{D}$ and $f \in L_{\mathrm{loc}}^1(\mathbb{D},\omega\mathrm{d} A)$. If $C_f<\infty$, then $T_f: A_\omega^p\rightarrow
A_\omega^p$ is bounded for $1<p<\infty$.
\end{theorem}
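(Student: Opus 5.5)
The plan is to reduce the theorem to Theorem \ref{main1}(i) by showing that $C_f<\infty$ implies condition \eqref{A1}, i.e. $\sup|{\bf M}(f,r,\theta,\tau)|\lesssim C_f$. Fix $0<r<1$, $\theta\in[0,2\pi]$ and $0<\tau\leq 1$. Choose $k$ with $r_k\leq r<r_{k+1}$. The region $S^\tau_{1-r}(e^{i\theta})$ is the polar rectangle $\{te^{i\eta}: r<t<1,\ |\eta-\theta|\leq\pi\tau(1-r)\}$. Cut it radially along the circles $|z|=r_m$ for $m>k$, and angularly along the dyadic rays $\eta=2\pi\mu/2^{m+1}$ at the generation matching each annulus $r_m\leq t<r_{m+1}$.

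Each resulting piece $\{te^{i\eta}: a\le t<b,\ \alpha\le\eta<\beta\}$ lies inside one dyadic box of generation $m+1$ in the notation of $C_f$, with $1-\frac{1}{2^{m}}\le a\le b\le 1-\frac{1}{2^{m+1}}$ and $\alpha,\beta$ in one interval $[\frac{2\pi\mu}{2^{m+1}},\frac{2\pi(\mu+1)}{2^{m+1}}]$. By inclusion–exclusion, the integral of $f\omega$ over the piece is a signed sum of at most four "corner" integrals
$$\int_{1-\frac{1}{2^{m}}}^{\rho}\int_{\frac{2\pi\mu}{2^{m+1}}}^{\vartheta}f(te^{i\eta})t\omega(t)\,\mathrm{d}\eta\,\mathrm{d}t .$$
Each of these is bounded by $C_f\,\widehat{\omega}(1-2^{-m-1})2^{-m-1}$. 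The endpoint $\rho=1-2^{-m-1}$ is not allowed directly in $C_f$, so I will handle it by a limiting argument: for $f\omega\in L^1_{\rm loc}$, the corner integral is continuous in $\rho$.

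Next, count the pieces. In annulus $m$ with $m\geq k+1$, the number of dyadic boxes needed is at most about $2^{m+1}\tau(1-r)+2\lesssim 2^{m-k}+2$. The two partial boxes at the angular ends cost four corner terms each, and full boxes cost one term each. In the initial partial annulus $r\le t<r_{k+1}$ the count is $O(1)$. Summing over $m$ and using $2^{-m}\widehat\omega(r_m)$ gives
$$\Big|\int_{S^\tau_{1-r}(e^{i\theta})}f\omega\,\mathrm{d}A\Big|\lesssim C_f\sum_{m\geq k}\big(2^{m-k}+1\big)2^{-m}\widehat{\omega}(r_m).$$

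The main obstacle is controlling this sum. The terms $2^{-k}\widehat\omega(r_m)$ summed over $m$ are fine only if $\sum_{m\ge k}\widehat\omega(r_m)\lesssim\widehat\omega(r_k)$. This is exactly where the reverse doubling $\omega\in\widecheck{\mathcal D}$ enters. Lemma \ref{Dcheck} gives $\widehat\omega(r_m)\lesssim 2^{-\alpha(m-k)}\widehat\omega(r_k)$, so the series is geometric. The remaining terms $2^{-m}\widehat\omega(r_m)$ are even smaller.

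Hence the whole integral is $\lesssim C_f\,\widehat\omega(r_k)(1-r_k)\asymp C_f\,\widehat\omega(r)(1-r)$, where the last equivalence uses the doubling property $\omega\in\widehat{\mathcal D}$ to compare $r$ with $r_k$. This gives $|{\bf M}(f,r,\theta,\tau)|\lesssim C_f$. Theorem \ref{main1}(i) then yields boundedness of $T_f^\omega$ on $A^p_\omega$ for $1<p<\infty$.

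A secondary point is that boxes with full angular width, and angular intervals wrapping past $2\pi$, must be split at $\eta=0$. Each such split adds only $O(1)$ extra corner terms per annulus, so it does not change the estimate.
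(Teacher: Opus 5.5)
Your argument is correct and follows the paper's route: the paper also derives \eqref{A1} from $C_f<\infty$ by a dyadic decomposition of the Carleson squares (citing only the idea of \cite[Theorem 3.2]{YZ}) and then applies Theorem \ref{main1}(i). You supply the details the paper omits, in particular the use of $\omega\in\widecheck{\mathcal{D}}$ (Lemma \ref{Dcheck}) to get $\sum_{m\ge k}\widehat{\omega}(r_m)\lesssim\widehat{\omega}(r_k)$, which is what makes the roughly $2^{m-k}$ full boxes per annulus summable in the weighted setting.
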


\subsection{The proof of Theorem \ref{main2}}
In this section, we prove Theorem \ref{main2} which gives a sufficient condition for the Toeplitz operators with locally integrable symbols to be bounded (or compact) on $A_\omega^1$ with $\mathcal{D}$ weights. The main strategy is similar to that in Theorem \ref{main1}.

\vspace{5pt}
\noindent \textbf{Proof of Theorem \ref{main2}.}
Let
\[
\delta_k=\sup\Big\{|\mathscr{M}(f,r,\theta,\tau)|:r_k \leq r \leq r_{k+1},\,\,\theta\in[0,2\pi],\,\, 0<\tau\leq1\Big\}.
\]
By \eqref{E0}, there exists a constant $C=C(\omega,f)>0$ such that $\delta_k\leq C$.

Since $\mathbb{D} = \mathop{\bigcup }\limits_{{k = 0}}^{\infty }\mathop{\bigcup }\limits_{{j = 0}}^{{{2}^{k} - 1}}{S}_{kj}$, for $g\in A_\omega^1$ we have
\begin{equation}\label{E1}
T_f^\omega(g)(z)
=\sum_{k=0}^{\infty} \sum_{j=0}^{2^k-1} \int_{S_{k j}}f(\zeta)g(\zeta)\overline{B_z^\omega(\zeta)}\omega(\zeta)\mathrm{d}A(\zeta),\quad{z\in\mathbb{D}}.
\end{equation}
Similarly to the proof of Theorem \ref{main1}, we apply integration by parts to get
\begin{align*}
&\sum_{k=0}^{\infty}\sum_{j=0}^{2^k-1} \int_{S_{k j}}f(\zeta)g(\zeta)\overline{B_z^\omega(\zeta)}\omega(\zeta)\mathrm{d}A(\zeta)\\
=&\bigg[\int_{r_k}^{r_{k+1}} \int_{\theta_j}^{\theta_{j+1}} r\omega(r) f(r e^{i \eta}) \mathrm{d} \eta \mathrm{d} r\bigg] g(r_{k+1} e^{i
\theta_{j+1}})\overline{B_z^\omega (r_{k+1} e^{i
\theta_{j+1}})} \\
&-\int_{r_k}^{r_{k+1}}\bigg[\int_{r_k}^r \int_{\theta_j}^{\theta_{j+1}} t\omega(t) f(t e^{i \eta}) \mathrm{d} \eta \mathrm{d}
t\bigg]\frac{\partial}{\partial r} \bigg(g(r e^{i \theta_{j+1}})\overline{B_z^\omega(r e^{i \theta_{j+1}})}\bigg) \mathrm{d} r\\
 &+\int_{\theta_j}^{\theta_{j+1}}\bigg[\int_{r_k}^{r_{k+1}} \int_{\theta_j}^\theta t\omega(t) f\left(t e^{i \eta}\right) \mathrm{d} \eta \mathrm{d}
t\bigg]\frac{\partial}{\partial \theta} \bigg(g(r_{k+1} e^{i \theta})\overline{B_z^\omega(r_{k+1} e^{i \theta})}\bigg) \mathrm{d} \theta, \\
&-\int_{\theta_j}^{\theta_{j+1}} \int_{r_k}^{r_{k+1}}\bigg[\int_{r_k}^r \int_{\theta_j}^\theta t\omega(t) f(t e^{i \eta}) \mathrm{d} \eta
\mathrm{d} t\bigg]\frac{\partial^2}{\partial r \partial \theta} \bigg(g(r e^{i \theta})\overline{B_z^\omega(r e^{i \theta})}\bigg) \mathrm{d} r \mathrm{d} \theta\\
=&L_{k j}^1-L_{k j}^2+L_{k j}^3-L_{k j}^4.
\end{align*}
We first consider $L_{k j}^1$. By an argument similar to that in Theorem \ref{main1}, we deduce from Lemma \ref{b1} that
\begin{align*}
\left|L_{k j}^1\right|
&\lesssim\frac{\delta_k}{\log{\frac{e}{1-r_k}}} \int_{\tilde{S}_{k j}}|g(\zeta)||B_z^\omega(\zeta)|\widetilde{\omega}(\zeta) \mathrm{d} A(\zeta)\\
&\lesssim  \int_{\tilde{S}_{k j}}\frac{|g(\zeta)|}{\log{\frac{e}{1-|\zeta|}}}|B_z^\omega(\zeta)|\widetilde{\omega}(\zeta) \mathrm{d}
A(\zeta),\quad{z\in\mathbb{D}}.
\end{align*}
Note that $r_{k+1}-r_k \asymp \theta_{j+1}-\theta_j \asymp 1-|\zeta|$ for $\zeta \in \widetilde{S}_{kj}$. In a way similar to the proof of Theorem \ref{main1}(i), we can also get the estimate of $L_{k j}^2, L_{k j}^3$ and $L_{k j}^4$. It follows from \eqref{E1} and Lemma \ref{b2} that
\begin{equation}\label{E2}
\left|\int_{\mathbb{D}}f(\zeta)g(\zeta)\overline{B_z^\omega(\zeta)}\omega(\zeta)\mathrm{d}A(\zeta)\right|
\lesssim \int_{\mathbb{D}}\frac{G_z^\omega(\zeta)}{\log{\frac{e}{1-|\zeta|}}}\widetilde{\omega}(\zeta)\mathrm{d}A(\zeta),
\end{equation}
where
\begin{align*}
G_z^\omega(\zeta)= & |g(\zeta) B_z^\omega(\zeta)|+\left|g^{\prime}(\zeta)(1-|\zeta|) B_z^\omega(\zeta)\right|+\left|g(\zeta)(1-|\zeta|) ({B_z^\omega})^{\prime}(\zeta)\right|\\
&+\left|g^{\prime}(\zeta) (1-|\zeta|)^2({B_z^\omega})^{\prime}(\zeta)\right|+\left|g^{\prime \prime}(\zeta)\left(1-|\zeta|\right)^2 B_z^\omega(\zeta)\right|\\
&+\left|g(\zeta) (1-|\zeta|)^2({B_z^\omega})^{\prime
\prime}(\zeta)\right|,\quad{z\in\mathbb{D}}.
\end{align*}
For simplicity, we only give the estimation of the former two integrals on the right hand side of \eqref{E2}.
Fubini's theorem together with Lemma \ref{Ne1} and \eqref{RD} gives
$$
\begin{aligned}
\left\|\int_{\mathbb{D}} \frac{|g(\zeta)|}{\log{\frac{e}{1-|\zeta|}}} |B_z^\omega(\zeta)|\widetilde{\omega}(\zeta) \mathrm{d} A(\zeta)\right\|_{L_\omega^1}
 =\int_{\mathbb{D}} \frac{|g(\zeta)|}{\log{\frac{e}{1-|\zeta|}}}\int_{\mathbb{D}}|B_\zeta^\omega(z)|\omega(z)\mathrm{d}
A(z)\widetilde{\omega}(\zeta)\mathrm{d} A(\zeta)
\asymp \|g\|_{A_\omega^1}.
\end{aligned}
$$
Moreover, the same reasoning combined with Lemma \ref{lemE} implies
\begin{align*}
\left\|\int_{\mathbb{D}}\frac{(1-|\zeta|)}{\log{\frac{e}{1-|\zeta|}}}|g^{\prime}(\zeta)B_z^\omega(\zeta)| \widetilde{\omega}(\zeta)\mathrm{d}A(\zeta)\right\|_{L_\omega^1}
&=\int_{\mathbb{D}}|g^{\prime}(\zeta)|\frac{(1-|\zeta|)}{\log{\frac{e}{1-|\zeta|}}}\int_{\mathbb{D}}|B_{\zeta}^\omega(z)|
\omega(z)\mathrm{d}A(z)\widetilde{\omega}(\zeta)\mathrm{d}A(\zeta)\\
&\asymp \int_{\mathbb{D}}|g^{\prime}(\zeta)|(1-|\zeta|))\widetilde{\omega}(\zeta)\mathrm{d}A(\zeta)\\
&\lesssim \|g\|_{A_{\widetilde{\omega}}^1}\asymp \|g\|_{A_\omega^1}.
\end{align*}
Similarly, we have
$$\left\|\int_{\mathbb{D}}\frac{(1-|\zeta|)}{\log{\frac{e}{1-|\zeta|}}}|g(\zeta)(B_z^\omega)^{\prime}(\zeta)| \widetilde{\omega}(\zeta)\mathrm{d}A(\zeta)\right\|_{L_\omega^1} \lesssim \|g\|_{A_\omega^1},$$	
$$\left\|\int_{\mathbb{D}}\frac{(1-|\zeta|)^2}{\log{\frac{e}{1-|\zeta|}}}|g^{\prime}(\zeta)(B_z^\omega)^{\prime}(\zeta)| \widetilde{\omega}(\zeta)\mathrm{d}A(\zeta)\right\|_{L_\omega^1}\lesssim  \|g\|_{A_\omega^1},
$$	
and
$$ \left\|\int_{\mathbb{D}}\frac{(1-|\zeta|)^2}{\log{\frac{e}{1-|\zeta|}}}|g(\zeta)(B_z^\omega)^{\prime\prime}(\zeta)| \widetilde{\omega}(\zeta)\mathrm{d}A(\zeta)\right\|_{L_\omega^1}\lesssim  \|g\|_{A_\omega^1}.
$$
As a consequence, we have
 $$\left\|\int_{\mathbb{D}}\frac{G_z^\omega(\zeta)}{\log{\frac{e}{1-|\zeta|}}}\widetilde{\omega}(\zeta)\mathrm{d}A(\zeta)\right\|_{L_\omega^1}
\lesssim \|g\|_{A_\omega^1} .$$
It follows from \eqref{E2} that for $g\in A_\omega^1$, it holds
$$
\|T_f^\omega (g)\|_{A_\omega^1}\lesssim
\left\|\int_{\mathbb{D}}\frac{G_z^\omega(\zeta)}{\log{\frac{e}{1-|\zeta|}}}\widetilde{\omega}(\zeta)\mathrm{d}A(\zeta)\right\|_{L_\omega^1}
\lesssim\|g\|_{A_\omega^1},
$$
which completes the proof of Theorem \ref{main2}(i).

In a way similar to the proof of Theorem \ref{main1}, we derive that $\|T_f^\omega g_n\|_{A_\omega^1} \rightarrow 0$ as $n \rightarrow \infty$ if $\{g_n\}_{n=1}^\infty$ is any bounded sequence in $A_\omega^1$ that converges to $0$ uniformly on any compact subset of $\mathbb{D}$. By a standard density argument, this implies the compactness of $T_f^{\omega}$ on $A_{\omega}^1$.
\qed

\subsection{The boundedness of Toeplitz operator with symbols in $\mathrm{BMO}^p_{\omega,\log}\cap L^\infty$}
In this section, we first provide a sufficient condition for the boundedness of Toeplitz operator $T_f^\omega:A_\omega^1 \rightarrow A_\omega^1$ with $\mathrm{BMO}^p_{\omega,\log}\cap L^\infty$ symbol for $\omega\in \mathcal{D}$. Furthermore, when $f$ is also analytic on $\mathbb{D}$, i.e. $f\in \mathcal{LB}\cap H^\infty$, we also provide the equivalent characterization of the boundedness of $T_{\bar{f}}$ acting on $A_\omega^1$. First we define some function spaces as follows.

Let $\omega$ be a radial weight and $0<r<\infty$ such that $\omega(D(z,r))>0$. We define the logarithmic mean oscillation space, denoted by
$\mathrm{BMO}_{\omega, r,\log }^p$, to be the space consisting of all functions $f$ in $\mathrm{BMO}_{\omega,r}^p$ for which
$$
\|f\|_{\mathrm{BMO}^p_{\omega,r,\log }}=\sup _{z \in \mathbb{D}} \bigg(\Big(\log{\frac{e}{1-|z|}}\Big)\Big(\mathrm{M O}_{\omega,r}^p(f)(z)\Big)\bigg)<\infty.
$$
The logarithmic $\mathrm{BO}$ space, denoted by $\mathrm{BO}_{r, \log }$, consists of all $f \in\mathrm{BO}$ for which
$$
\|f\|_{\mathrm{BO}_{r, \log }}=\sup _{z \in \mathbb{D}}\bigg( \Big(\log{\frac{e}{1-|z|}}\Big)\Omega_r f(z)\bigg)<\infty.
$$
For $0<p<\infty$ and $0<r<\infty$, we say that $f \in \mathrm{BA}_{\omega,r,\log}^p$ if $f\in L^p_\omega$ and
$$
\|f\|_{\mathrm{BA}_{\omega, r, \log }^p}=\sup _{z \in \mathbb{D}} \left(\Big(\log{\frac{e}{1-|z|}}\Big)\left(\frac{1}{\omega(D(z,r))} \int_{D(z, r)}|f(\zeta)|^p
\omega(\zeta)\mathrm{d} A(\zeta)\right)^{1 / p}\right)<\infty.
$$
For $\omega\in\mathcal{D}$, there exists an $r_0=r_0(\omega)$ such that \eqref{Dweight2} is satisfied.
Using the argument in \cite{PPR} with minor modifications, for such $r_0$, the above spaces can be shown to be independent of $r$ with $r\geq r_0$, so
we use $
\mathrm{BMO}_{\omega, \log }^p$, $\mathrm{BO}_{\log }$ and $\mathrm{BA}_{\omega,\log }^p$ instead.
For $\omega\in \mathcal{D}$, the relationship between $\mathrm{BA}_{\omega,\log}^1$ and the Toeplitz operator on $A_{\omega}^1$ was investigated in \cite{DGWW2}.
\begin{lemma}[Propositon 4.1 in \cite{DGWW2}]\label{BAlog}
Let $\omega$ be a $\mathcal{D}$ weight and $f\in L_\omega^1$. If there exists $r\in(0,1)$ such that $$
\sup_{z\in\mathbb{D}} \frac{\log\frac{e}{1-|z|}}{\omega(D(z,r))} \int_{D(z,r)} |f(\zeta)| \omega(\zeta)\mathrm{d}A(\zeta)<\infty,
$$
then the Toeplitz operator $T_f^\omega$ is bounded on $A_\omega^1$.
\end{lemma}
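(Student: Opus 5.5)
The plan is to follow the scheme of the proof of Theorem \ref{main2}, but to use a pointwise kernel bound on Bergman discs in place of the integration by parts. By Fubini's theorem,
$$\|T_f^\omega g\|_{A_\omega^1}\le \int_{\mathbb{D}}|f(\zeta)||g(\zeta)|\,\|B_\zeta^\omega\|_{A_\omega^1}\,\omega(\zeta)\mathrm{d}A(\zeta),\qquad g\in H^\infty.$$
By Lemma \ref{Ne1}(ii) with $p=1$, $n=0$ and $\nu=\omega$,
$$\|B_\zeta^\omega\|_{A_\omega^1}\asymp\int_0^{|\zeta|}\frac{\mathrm{d}t}{1-t}\asymp \log\frac{e}{1-|\zeta|}.$$
It therefore suffices to show that the positive measure $\mathrm{d}\mu(\zeta)=|f(\zeta)|\log\frac{e}{1-|\zeta|}\,\omega(\zeta)\mathrm{d}A(\zeta)$ is a Carleson measure for $A_\omega^1$. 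That is, we want
$$\int_{\mathbb{D}}|g|\,\mathrm{d}\mu\lesssim\|g\|_{A_\omega^1}.$$

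To prove this, I will take an $r$-lattice $\{a_j\}$ with $r\ge r_0$. On each $D(a_j,r)$ the quantity $\log\frac{e}{1-|\zeta|}$ is comparable to $\log\frac{e}{1-|a_j|}$. The hypothesis then gives
$$\mu(D(a_j,r))\lesssim \omega(D(a_j,r))\asymp\widehat{\omega}(a_j)(1-|a_j|).$$
Next, by subharmonicity on the larger disc $D(a_j,2r)$, together with \eqref{Dweight2} and the comparability of $\widetilde{\omega}$ on Bergman discs (Lemma \ref{DR}),
$$\sup_{D(a_j,r)}|g|\lesssim\frac{1}{\widehat{\omega}(a_j)(1-|a_j|)}\int_{D(a_j,2r)}|g|\,\widetilde{\omega}\,\mathrm{d}A.$$
This is the same argument as in Lemma \ref{b1}, carried out with Bergman discs instead of the sets $\widetilde S_{kj}$. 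Summing over $j$, the bounded overlap of the discs $D(a_j,2r)$ and \eqref{RD} give
$$\int_{\mathbb{D}}|g|\,\mathrm{d}\mu\lesssim\|g\|_{A_{\widetilde\omega}^1}\asymp\|g\|_{A_\omega^1}.$$

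With this estimate, the defining integral of $T_f^\omega g$ converges absolutely and $\|T_f^\omega g\|_{A_\omega^1}\lesssim\|g\|_{A_\omega^1}$ for $g\in H^\infty$. Density of $H^\infty$ in $A_\omega^1$ then gives boundedness.

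The main obstacle is the choice of radius. The hypothesis is stated for some $r\in(0,1)$, which may be smaller than $r_0$, and for $\omega\in\mathcal{D}$ small discs can carry degenerate $\omega$-mass. I would handle this by first enlarging the radius: covering $D(z,r_0)$ by boundedly many discs $D(w,r)$ with $w\in D(z,r_0)$, as in the argument of \cite[Lemma 10]{PPR}. The log factor is harmless there, since $\log\frac{e}{1-|w|}\asymp\log\frac{e}{1-|z|}$. The remaining point is to check that the comparison of $\omega(D(w,r))$ with $\omega(D(z,r_0))$ survives the enlargement; this is where I expect the main work to lie.
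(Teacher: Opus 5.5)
The paper gives no proof of this lemma; it quotes it from \cite[Proposition 4.1]{DGWW2}. Your argument is a correct proof, and it uses the same tools the paper applies elsewhere:
\begin{itemize}
\item the Fubini step with $\|B_\zeta^\omega\|_{A_\omega^1}\asymp\log\frac{e}{1-|\zeta|}$ is exactly \eqref{TL5}, used the same way in the proof of Theorem \ref{main2};
\item the reduction to $\sup_z\mu(D(z,r))/(\widehat{\omega}(z)(1-|z|))<\infty$ is the Carleson test invoked in the proof of Theorem \ref{TB}.
\end{itemize}
Your lattice and subharmonicity argument simply proves that test directly for $A^1_{\widetilde{\omega}}$ and then transfers it to $A^1_\omega$ via \eqref{RD}.

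The obstacle you flag at the end is not real, so you can drop the enlargement. You never need a lower bound on $\omega(D(z,r))$, only the upper bound
$$\omega(D(z,s))\le \omega\big(D(z,\max\{s,r_0\})\big)\asymp\widehat{\omega}(z)(1-|z|),$$
which holds for every fixed $s>0$ by monotonicity in the radius and \eqref{Dweight2}. Hence the hypothesis at the given, possibly small, $r$ gives, for every $z$,
$$\int_{D(z,r)}|f|\,\omega\,\mathrm{d}A\le C\,\frac{\omega(D(z,r))}{\log\frac{e}{1-|z|}}\lesssim\frac{\widehat{\omega}(z)(1-|z|)}{\log\frac{e}{1-|z|}}.$$
At points where $\omega(D(z,r))=0$ the left-hand side is $0$, so the degenerate discs cause no trouble. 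It follows that $\mu(D(z,r))\lesssim\widehat{\omega}(z)(1-|z|)$. A small radius actually makes the hypothesis stronger, since it normalizes by the smaller quantity $\omega(D(z,r))$.

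Now run your lattice argument with an $r$-lattice for this same $r$. The subharmonicity bound $\sup_{D(a_j,r)}|g|\lesssim(1-|a_j|)^{-2}\int_{D(a_j,2r)}|g|\,\mathrm{d}A$ holds for any fixed radius. The comparability $\widetilde{\omega}\asymp\widehat{\omega}(a_j)/(1-|a_j|)$ on $D(a_j,2r)$ follows from Lemma \ref{Dhat}(ii) alone, not from \eqref{Dweight2}.

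If you prefer to keep the enlargement, it is equally easy. Cover $D(z,r_0)$ by $N=N(r,r_0)$ discs $D(w_i,r)\subset D(z,r_0+r)$; then
$$\mu(D(z,r_0))\lesssim\sum_i\omega(D(w_i,r))\le N\,\omega(D(z,r_0+r))\lesssim\widehat{\omega}(z)(1-|z|).$$

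One more observation: the kernel estimate (Lemma \ref{Ne1}), the upper bound on $\omega(D(z,s))$, and the subharmonicity step all hold for $\omega\in\widehat{\mathcal{D}}$. The full assumption $\omega\in\mathcal{D}$ enters only through the inequality $\|g\|_{A^1_{\widetilde{\omega}}}\lesssim\|g\|_{A^1_\omega}$ from \eqref{RD}.
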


Analogously to the case of $\mathrm{BMO}_{\omega}^p$, we give the following decomposition of $\mathrm{BMO}^p_{\omega,\log}$ with $\mathcal{D}$ weights. The proof is similar to \cite[Theorem 11]{PPR} and thus is omitted.

\begin{proposition} Let  $\omega\in\mathcal{D}$ and $1 \leqslant p<\infty$. Then there exists $r_0=r_0(\omega)$ such that $\forall r\geq r_0$, it holds $\mathrm{BMO}_{\omega,
\log }^p=\mathrm{BO}_{\log }+\mathrm{BA}_{\omega,\log}^p$. Moreover, if $f=f_1+f_2$ with $f_1 \in \mathrm{BO}_{\log }$ and $f_2 \in \mathrm{BA}_{\omega,\log
}^p$, then $f\in\mathrm{BMO}_{\omega,
\log }^p$ with $\|f\|_{\mathrm{BMO}_{\omega,\log }^p} \asymp$ $\left\|f_1\right\|_{\mathrm{BO}_{\log }}+\left\|f_2\right\|_{\mathrm{BA}_{\omega,\log }^p}$.
\end{proposition}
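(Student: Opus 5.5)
We follow the scheme of \cite[Theorem 11]{PPR}, keeping track of the extra factor $\log\frac{e}{1-|z|}$ throughout. Fix $r\geq r_0$, with $r_0$ as in \eqref{Dweight2}.

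\emph{Sufficiency.} Let $f=f_1+f_2$ with $f_1\in\mathrm{BO}_{\log}$ and $f_2\in\mathrm{BA}^p_{\omega,\log}$. For $f_1$, every $\zeta\in D(z,r)$ satisfies $|f_1(\zeta)-f_1(z)|\le\Omega_r f_1(z)$. Replacing the mean $\widehat{(f_1)}_{r,\omega}(z)$ by the constant $f_1(z)$ costs at most a factor $2$, so
$$\mathrm{MO}^p_{\omega,r}(f_1)(z)\le 2\,\Omega_r f_1(z)\le 2\|f_1\|_{\mathrm{BO}_{\log}}\Big(\log\frac{e}{1-|z|}\Big)^{-1}.$$
For $f_2$, Minkowski's and H\"older's inequalities give
$$\mathrm{MO}^p_{\omega,r}(f_2)(z)\le 2\Big(\widehat{(|f_2|^p)}_{r,\omega}(z)\Big)^{1/p}.$$
Multiplying both bounds by $\log\frac{e}{1-|z|}$ and taking the supremum yields $\|f\|_{\mathrm{BMO}^p_{\omega,\log}}\lesssim\|f_1\|_{\mathrm{BO}_{\log}}+\|f_2\|_{\mathrm{BA}^p_{\omega,\log}}$.

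\emph{Necessity.} Given $f\in\mathrm{BMO}^p_{\omega,\log}$, set $f_1=\widehat f_{r,\omega}$ and $f_2=f-f_1$. The function $f_1$ is continuous, since $\omega(D(z,r))>0$ by \eqref{Dweight2} and it depends continuously on $z$.

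First take $w$ with $\beta(z,w)<r$. Then $D(z,r)\cup D(w,r)\subset D(z,3r)$, and by \eqref{Dweight2} together with Lemmas \ref{Dhat}(ii) and \ref{Dcheck}, the three quantities $\omega(D(z,3r))$, $\omega(D(z,r))$ and $\omega(D(w,r))$ are mutually comparable. Comparing both averages with $\widehat f_{3r,\omega}(z)$ gives
$$|f_1(z)-f_1(w)|\lesssim \mathrm{MO}^p_{\omega,3r}(f)(z).$$
Next, because $1-|z|\asymp 1-|w|$ for $w\in D(z,r)$, we have $\log\frac{e}{1-|z|}\asymp\log\frac{e}{1-|w|}$ there. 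Hence
$$\Big(\widehat{(|f_2|^p)}_{r,\omega}(z)\Big)^{1/p}\le \mathrm{MO}^p_{\omega,r}(f)(z)+\sup_{w\in D(z,r)}|f_1(w)-f_1(z)|\lesssim\mathrm{MO}^p_{\omega,3r}(f)(z).$$

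The key step, and the main obstacle, is independence of the radius in the logarithmic setting. We need
$$\sup_z\log\frac{e}{1-|z|}\,\mathrm{MO}^p_{\omega,3r}(f)(z)\lesssim\|f\|_{\mathrm{BMO}^p_{\omega,r,\log}}.$$
To get it, cover $D(z,3r)$ by a bounded number $N=N(r)$ of discs $D(a_i,r)$ taken from an $r$-lattice, chained so that consecutive discs overlap in sets of $\omega$-measure comparable to $\omega(D(z,r))$. On each overlap, the difference of the two adjacent averages is controlled by the $\mathrm{MO}^p_{\omega,r}$ of the two discs. This uses $\mathcal D$ through \eqref{Dweight2} applied to the smaller Bergman discs contained in the overlaps, for which one may need to enlarge $r_0$. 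All $a_i$ satisfy $1-|a_i|\asymp1-|z|$, so the logarithmic weights are comparable. Summing over the chain then gives the claim with constants depending only on $N$ and $\omega$. This is exactly the chaining argument behind \cite[Lemma 10, Theorem 11]{PPR}, with the additional observation that the factor $\log\frac{e}{1-|\cdot|}$ is essentially constant on each Bergman disc.

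With this in hand, $f_1\in\mathrm{BO}_{\log}$ and $f_2\in\mathrm{BA}^p_{\omega,\log}$, with norms controlled by $\|f\|_{\mathrm{BMO}^p_{\omega,\log}}$. This gives the decomposition, and together with the sufficiency part the norm equivalence follows.
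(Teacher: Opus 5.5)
Your proposal is correct and is essentially the argument the paper intends. The paper omits the proof and refers to \cite[Theorem 11]{PPR}, which is the canonical splitting $f_1=\widehat{f}_{r,\omega}$, $f_2=f-f_1$ together with a chaining argument to change the radius; your one addition, that $\log\frac{e}{1-|\cdot|}$ is essentially constant on Bergman discs of fixed radius, is exactly the ``minor modification'' the paper alludes to. One caveat: the norm equivalence only holds as $\|f\|_{\mathrm{BMO}^p_{\omega,\log}}\asymp\inf\{\|f_1\|_{\mathrm{BO}_{\log}}+\|f_2\|_{\mathrm{BA}^p_{\omega,\log}}\}$ over all decompositions, and that is precisely what your sufficiency bound and the canonical decomposition give. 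For a fixed decomposition only $\lesssim$ holds in general, e.g.\ $0=g+(-g)$ with $g=1/\log\frac{e}{1-|z|}$, which lies in both spaces.
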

Before we proceed, we give the following useful result.
\begin{lemma}\label{logBMO}
Let $\omega\in \mathcal{D} $, $1<p<\infty$ and  $r_0$ be chosen as in \eqref{Dweight2}. If $f\in L^{\infty} \cap \mathrm{BMO}^p_{\omega,r,\log}$, then $fg\in\mathrm{BMO}^p_{\omega,r}$ for each $g \in
\mathrm{BMO}^p_{\omega,r}$ and $r\geq r_0$.

\end{lemma}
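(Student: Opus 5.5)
We estimate $\mathrm{MO}^p_{\omega,r}(fg)(z)$ directly, pointwise in $z$, using a constant adapted to the product. Fix $z\in\mathbb{D}$, $r\ge r_0$, and write $D=D(z,r)$, $a=\widehat{f}_{r,\omega}(z)$, $b=\widehat{g}_{r,\omega}(z)$. Since $\mathrm{MO}^p_{\omega,r}(h)(z)$ is comparable (within a factor $2$) to $\inf_{c\in\mathbb{C}}\big(\frac{1}{\omega(D)}\int_D|h-c|^p\omega\,\mathrm{d}A\big)^{1/p}$, it suffices to test the constant $c=ab$. We split
$$
fg-ab=f\,(g-b)+b\,(f-a).
$$
Minkowski's inequality then gives
$$
\mathrm{MO}^p_{\omega,r}(fg)(z)\lesssim \|f\|_{L^\infty}\,\mathrm{MO}^p_{\omega,r}(g)(z)+|b|\,\mathrm{MO}^p_{\omega,r}(f)(z).
$$
The first term is at most $\|f\|_\infty\|g\|_{\mathrm{BMO}^p_{\omega,r}}$. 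The second is at most $|\widehat{g}_{r,\omega}(z)|\,\|f\|_{\mathrm{BMO}^p_{\omega,r,\log}}/\log\frac{e}{1-|z|}$. So everything reduces to the growth estimate
$$
|\widehat{g}_{r,\omega}(z)|\lesssim \|g\|_{\mathrm{BMO}^p_{\omega,r}}\log\frac{e}{1-|z|}+C_g,\qquad z\in\mathbb{D}.
$$

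To prove this growth estimate, we use the decomposition from \cite[Theorem 11]{PPR}: $g=g_1+g_2$ with $g_1\in \mathrm{BA}^p_{\omega,r}$ and $g_2\in\mathrm{BO}$.

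For the $\mathrm{BA}$ part, Hölder's inequality gives $|\widehat{(g_1)}_{r,\omega}(z)|\le \big(\widehat{(|g_1|^p)}_{r,\omega}(z)\big)^{1/p}\le\|g_1\|_{\mathrm{BA}^p_{\omega,r}}$, which is bounded.

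For the $\mathrm{BO}$ part, \eqref{BO} gives $|g_2(u)-g_2(0)|\lesssim\|g_2\|_{\mathrm{BO}}(1+\beta(u,0))$. For $u\in D(z,r)$ we have $\beta(u,0)\le\beta(z,0)+r\lesssim \log\frac{e}{1-|z|}$, because $\beta(z,0)=\frac12\log\frac{1+|z|}{1-|z|}$. Averaging over $D$ with respect to $\omega$ yields
$$
|\widehat{(g_2)}_{r,\omega}(z)|\le |g_2(0)|+C\|g_2\|_{\mathrm{BO}}\log\frac{e}{1-|z|}.
$$

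Combining the two parts, $|b|/\log\frac{e}{1-|z|}$ is bounded uniformly in $z$, since $\log\frac{e}{1-|z|}\ge 1$. Hence $\sup_z\mathrm{MO}^p_{\omega,r}(fg)(z)<\infty$.

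It also remains to check that $fg\in L^p_\omega$, as membership in $\mathrm{BMO}^p_{\omega,r}$ requires. This holds because $f\in L^\infty$ and $g\in L^p_\omega$.

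The main obstacle is the logarithmic growth bound for the averages $\widehat{g}_{r,\omega}$. It rests on the $\mathrm{BO}+\mathrm{BA}$ decomposition, valid for $r\ge r_0$ because $\omega\in\mathcal{D}$ and \eqref{Dweight2} holds, together with the Lipschitz-type estimate \eqref{BO} in the Bergman metric. The logarithmic factor produced there is exactly cancelled by the $\log$ weight in the $\mathrm{BMO}^p_{\omega,r,\log}$ norm of $f$. Boundedness of $f$ is used only to control the first term.
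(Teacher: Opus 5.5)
Your proposal is correct and follows essentially the paper's argument: bound $\mathrm{MO}^p_{\omega,r}(fg)(z)\lesssim \|f\|_\infty\|g\|_{\mathrm{BMO}^p_{\omega,r}}+|\widehat{g}_{r,\omega}(z)|\,\mathrm{MO}^p_{\omega,r}(f)(z)$, then absorb the $(1+\beta(0,z))$-growth of $\widehat{g}_{r,\omega}$ into the logarithmic weight in the norm of $f$. The differences are cosmetic: you test against the constant $\widehat{f}_{r,\omega}(z)\widehat{g}_{r,\omega}(z)$, which avoids the paper's third Hölder term, and you get the growth of $\widehat{g}_{r,\omega}$ from the $\mathrm{BA}+\mathrm{BO}$ splitting of $g$ instead of quoting $\widehat{g}_{r,\omega}\in\mathrm{BO}$.
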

\begin{proof}
By the triangle inequality, for $\zeta,z\in\mathbb{D}$ we have
\begin{eqnarray}\label{G1}
\left|f(\zeta) g(\zeta)-(\widehat{f g})_{r,\omega}(z)\right|
&\leq & \|f\|_{\infty}|g(\zeta)-\widehat{g}_{r,\omega}(z)|
+|\widehat{g}_{r,\omega}(z)||f(\zeta)-\widehat{f}_{r,\omega}(z)|\nonumber\\
&\quad&+\,|\widehat{g}_{r,\omega}(z) \widehat{f}_{r,\omega}(z)-(\widehat{f g})_{r,\omega}(z)|.
\end{eqnarray}
By H\"{o}lder's inequality, for $z\in\mathbb{D}$, we obtain
\begin{eqnarray}\label{G2}
\left|\widehat{g}_{r,\omega}(z) \widehat{f}_{r,\omega}(z)-(\widehat{fg})_{r,\omega}(z)\right|
& \leq&\|f\|_{\infty}\left(\frac{1}{\omega(D(z,r))} \int_{D(z,r)}\left|g(u)-\widehat{g}_{r,\omega}(z)\right|^p \omega(u)\mathrm{d} A(u)\right)^{1 / p}\nonumber\\
&\leq&\|f\|_{\infty}\|g\|_{\mathrm{BMO}^p_{\omega,r}}.
\end{eqnarray}
For $r\geq r_0$, it is obvious that $f\in\mathrm{BMO}^p_{\omega,\log}\subset \mathrm{BMO}^p_{\omega}$. Then combining \eqref{G1} and \eqref{G2} gives
\begin{equation}\label{G3}
\mathrm{MO}^p_{\omega,r}(f g)(z)
\lesssim
\|f\|_{\infty}\|g\|_{\mathrm{BMO}^p_{\omega}}+\left|\widehat{g}_{r,\omega}(z)\right| \mathrm{MO}^p_{\omega,r}(f)(z),\quad{z\in\mathbb{D}}.
\end{equation}
Since $\omega\in \mathcal{D}$, by the decomposition of $\mathrm{BMO}^p_{\omega}=\mathrm{BA}^p_{\omega}+\mathrm{BO}$, we have $\widehat{g}_{r,\omega}$
belongs to $\mathrm{BO}$ (see \cite{PPR}). Then \eqref{BO} gives
\begin{equation}\label{G4}
	|\widehat{g}_{r,\omega}(z)-\widehat{g}_{r,\omega}(0)|
	\lesssim \Big(1+\beta(0, z)\Big)\|\widehat{g}_{r,\omega}\|_{\mathrm{BO}}
	\lesssim \Big(1+\beta(0, z)\Big)\|g\|_{\mathrm{BMO}^p_{\omega}},\quad z \in \mathbb{D}.
\end{equation}
It follows from \eqref{G3} and \eqref{G4} that
\begin{align}
\mathrm{MO}^p_{\omega,r}(f g)(z)
&\lesssim \|f\|_{\infty}\|g\|_{\mathrm{BMO}^p_{\omega}}+\|f\|_{\mathrm{BMO}^p_{\omega,\log}}\|g\|_{\mathrm{BMO}^p_{\omega}}+|\widehat{g}_{r,\omega}(0)|\mathrm{MO}^p_{\omega,r}(f)(z)\label{eq3.31}\\
& \lesssim \|f\|_{\infty}\|g\|_{\mathrm{BMO}^p_{\omega}}+\|f\|_{\mathrm{BMO}^p_{\omega,\log}}\|g\|_{\mathrm{BMO}^p_{\omega}}+ \frac{\|g\|_{L_\omega^p}}{\omega(\mathbb{D})}\|f\|_{\mathrm{BMO}^p_{\omega}},\nonumber
\end{align}
which completes the proof of lemma.
\end{proof}

Recall that the Bloch space $\mathcal{B}$ consists of $f\in H(\mathbb{D})$ such that
$$
\|f\|_{\mathcal{B}}=\sup_{z\in\mathbb{D}}|f^{\prime}(z)|(1-|z|^2)+|f(0)|<\infty.
$$
The dual space of $A_\omega^1$ can be identified with the Bloch space under the pairing of $A_\omega^2$ for $\mathcal{D}$ weights.
\begin{lemma}[\cite{PR2}]\label{Bloch3}
Let $\omega\in\mathcal{D}$. Then $(A_\omega^1)^*\simeq \mathcal{B}$ via the $A_\omega^2$-pairing
$$
\langle f,g \rangle_{L_\omega^2}=\lim_{r \rightarrow 1^-}\int_{\mathbb{D}}f_r(\zeta)g(\zeta)\omega(\zeta)\mathrm{d}A(\zeta), \quad f\in A_\omega^1,\,\,g\in \mathcal B,
$$
where $f_r(\zeta)=f(r\zeta), {r\in(0,1)}.$
\end{lemma}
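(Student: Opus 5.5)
The statement to prove is Lemma~\ref{Bloch3}, the duality $(A_\omega^1)^*\simeq\mathcal B$ under the $A_\omega^2$-pairing for $\omega\in\mathcal D$. I would prove it in two directions, relying on the Littlewood--Paley formula (Lemma~\ref{lemE}) and the regularization $\widetilde\omega\in\mathcal R$ from Lemma~\ref{DR}.

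\emph{Step 1: every $g\in\mathcal B$ induces a bounded functional.} For $f\in A_\omega^1$ and $0<r<1$, the pairing $\int_{\mathbb D} f_r g\,\omega\,\mathrm dA$ will be rewritten by polar coordinates and Taylor coefficients as
\[
\sum_n a_n r^n\,\overline{\widehat g(n)}\cdot 2\omega_{2n+1},
\]
where $a_n$ and $\widehat g(n)$ are the Taylor coefficients of $f$ and $g$. The idea is to transfer one derivative from $g$ onto $f$. I would show that, up to the terms at $n=0,1$, this sum equals a constant multiple of
\[
\int_{\mathbb D} (f_r)'(\zeta)\,\overline{g'(\zeta)}\,(1-|\zeta|)^2\,W(\zeta)\,\mathrm dA(\zeta),
\]
where $W$ is a radial weight chosen with moments $W_{2n+1}(\cdot)$ matching $\omega_{2n+1}/n^2$. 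Rather than insisting on exactness, I would establish comparability of moments via Lemma~\ref{Dhat}(iv),(v) and Lemma~\ref{Dcheck}. A cleaner route is to take $W=\widetilde\omega$ up to a correction and to use the fact that for $\omega\in\mathcal D$ the moments satisfy $\omega_x\asymp\widehat\omega(1-1/x)$.

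Granting such a representation, the estimate is
\[
|\langle f_r,g\rangle|\lesssim \|g\|_{\mathcal B}\int_{\mathbb D}|f'(r\zeta)|(1-|\zeta|)\,\widetilde\omega(\zeta)\,\mathrm dA(\zeta)+|f(0)g(0)|\lesssim \|g\|_{\mathcal B}\|f\|_{A_\omega^1}.
\]
Here the last inequality uses Lemma~\ref{lemE} with $k=1$ together with \eqref{RD}. Uniformity in $r$, plus convergence of the series for polynomials and density, shows that the limit exists.

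\emph{Step 2: every bounded functional $L$ on $A_\omega^1$ arises this way.} I would define $g(z)=\overline{L(B_z^\omega)}$, which is well-defined since $B_z^\omega\in H^\infty\subset A_\omega^1$ for fixed $z$. Then $g'(z)=\overline{L(\partial_{\bar z}B_z^\omega)}$, which gives
\[
|g'(z)|\le\|L\|\,\|\partial_{\bar z}B^\omega_z\|_{A_\omega^1}.
\]
Lemma~\ref{Ne1}(ii) with $n=1$ and $p=1$, combined with Lemma~\ref{Dhat}(ii), should give
\[
\int_0^{|z|}\frac{\widehat\omega(t)}{\widehat\omega(t)(1-t)^2}\,\mathrm dt\asymp\frac{1}{1-|z|},
\]
so $g\in\mathcal B$ with $\|g\|_{\mathcal B}\lesssim\|L\|$. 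The reproducing property $\langle f,B_z^\omega\rangle=f(z)$ on polynomials then gives $L(f)=\langle f,g\rangle$ for polynomials $f$, and density of polynomials together with Step 1 extends this to all of $A_\omega^1$. Injectivity follows by testing the pairing on monomials.

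\emph{Main obstacle.} The hardest step is Step 1: producing a pairing identity that moves one derivative onto $f$ with a weight controlled by $(1-|\zeta|)\widetilde\omega$. This is exactly where the reverse-doubling part $\widecheck{\mathcal D}$ is needed. I would first try a coefficient multiplier argument, showing that the sequence $\lambda_n=n^2\omega_{2n+1}/\widetilde\omega^{(2)}_{2n-1}$, which compares the two moment sequences, is bounded and of bounded variation. With that in hand, the resulting coefficient multiplier preserves $\mathcal B$, and the pairing reduces to the standard one.
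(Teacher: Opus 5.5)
The paper does not prove this lemma (it is imported from \cite{PR2}), so your argument has to stand on its own. Your Step~2 does. The map $z\mapsto B^\omega_z$ is anti-analytic into $H^\infty$, and $\partial_{\bar z}B^\omega_z(\zeta)=\frac{\zeta}{\bar z}(B^\omega_z)'(\zeta)$, so Lemma~\ref{Ne1}(ii) with $n=p=1$ gives $\|g\|_{\mathcal B}\lesssim\|L\|$. Comparing Taylor coefficients gives $L(\zeta^n)=\langle\zeta^n,g\rangle_{L^2_\omega}$.

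\textbf{The gap is in Step~1}, which carries the substance of the lemma. You never produce the representation you ``grant'', and neither substitute you offer works. Comparability $n^2V_{2n-1}\asymp\omega_{2n+1}$ of moments yields no identity for the pairing. The multiplier fallback rests on a false criterion: a bounded coefficient sequence of bounded variation need not act boundedly on $\mathcal B$. The cut-offs $\chi_{\{n\le N\}}$ all have variation $1$, yet the Taylor partial sums $S_N$ are not uniformly bounded on $\mathcal B$. Take $g(z)=\sum_{0<|j|\le M}z^{2M+j}/j$. Then $\|g\|_{\mathcal B}\le 2\|g\|_\infty\lesssim 1$, while evaluating $(1-r^2)|(S_{2M-1}g)'(r)|$ at $r=1-\frac{1}{2M}$ gives $\|S_{2M-1}g\|_{\mathcal B}\gtrsim\log M$. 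A gliding-hump sum $\sum_j 2^{-j}\chi_{\{n\le N_j\}}$ with sufficiently sparse $N_j$ is then a single monotone sequence inducing an unbounded multiplier. To save that route you would need Mikhlin-type control such as $|\Delta\lambda_n|\lesssim n^{-1}$, $|\Delta^2\lambda_n|\lesssim n^{-2}$, together with a multiplier theorem on $\mathcal B$. Also, the moment comparison holds for every $\omega\in\widehat{\mathcal D}$; reverse doubling is not what it needs.

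\textbf{The missing idea} is that no approximation is necessary: the weight you want exists exactly, for every radial $\omega$. Let $\omega^*(\zeta)=\int_{|\zeta|}^1 s\log\frac{s}{|\zeta|}\,\omega(s)\,\mathrm{d}s$. Fubini and $\int_0^1u^{2n-1}\log\frac{1}{u}\,\mathrm{d}u=\frac{1}{4n^2}$ give $(\omega^*)_{2n-1}=\omega_{2n+1}/(4n^2)$. Comparing coefficients then gives, for $f\in A^1_\omega$, $g\in\mathcal B$ and $0<r<1$,
\begin{equation*}
\int_{\D}f_r\,\overline{g}\,\omega\,\mathrm{d}A=\omega(\D)f(0)\overline{g(0)}+4\int_{\D}(f_r)'(\zeta)\,\overline{g'(\zeta)}\,\omega^*(\zeta)\,\mathrm{d}A(\zeta),
\end{equation*}
that is, your $W$ is $4\omega^*/(1-|\zeta|)^2$. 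For $|\zeta|\ge\frac12$ we have $\omega^*(\zeta)\le\widehat\omega(\zeta)\log\frac{1}{|\zeta|}\le 2(1-|\zeta|)\widehat\omega(\zeta)$, and always $(1-|\zeta|^2)|g'(\zeta)|\le\|g\|_{\mathcal B}$. Hence the part of the last integral over $|\zeta|\ge\frac12$ is at most $2\|g\|_{\mathcal B}\int_{\D}|(f_r)'|(1-|\zeta|)\widetilde\omega\,\mathrm{d}A$. By Lemma~\ref{lemE} applied to $\widetilde\omega\in\mathcal R$ and \eqref{RD}, this is $\lesssim\|g\|_{\mathcal B}\|f_r\|_{A^1_{\widetilde\omega}}\asymp\|g\|_{\mathcal B}\|f_r\|_{A^1_\omega}\le\|g\|_{\mathcal B}\|f\|_{A^1_\omega}$. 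The part over $|\zeta|<\frac12$ is trivially bounded, since $\omega^*$ has only a logarithmic singularity at $0$. That Littlewood--Paley step is the only place where $\omega\in\mathcal D$ (through $\widecheck{\mathcal D}$) is used. With this identity in place of your sketched representation, the uniform bound, the existence of the limit and the norm equivalence follow as you describe.
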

The following result is well-known; see \cite{Zhu2} for example.
\begin{lemma}\label{Bloch}
An analytic function $f$ on $\mathbb{D}$ belongs to $\mathcal{B}$ if and only if there exists
a positive constant $C$ such that $|f(z)-f(w)| \leq C \beta(z, w)$ for all $z$ and $w$
in $\mathbb{D}$.
\end{lemma}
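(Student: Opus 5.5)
The statement is the classical characterization of the Bloch space. An analytic $f$ lies in $\mathcal{B}$ if and only if $|f(z)-f(w)|\le C\beta(z,w)$ for all $z,w\in\mathbb{D}$. I will prove the two directions separately. Both rest on one comparison: the infinitesimal Bergman metric of $\beta$ is $|dz|/(1-|z|^2)$.

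\emph{Sufficiency.} Assume $|f(z)-f(w)|\le C\beta(z,w)$ for all $z,w$. Fix $z$ and let $w\to z$. Since $|\varphi_z(w)|=|z-w|/|1-\bar zw|$, we have $\beta(z,w)=\tfrac12\log\frac{1+|\varphi_z(w)|}{1-|\varphi_z(w)|}= |\varphi_z(w)|+O(|\varphi_z(w)|^3)$. Hence
\[
\beta(z,w)=\frac{|z-w|}{1-|z|^2}\bigl(1+o(1)\bigr)\qquad (w\to z).
\]
Dividing the hypothesis by $|z-w|$ and letting $w\to z$ gives $|f'(z)|(1-|z|^2)\le C$. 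Thus $f\in\mathcal{B}$, with $\|f\|_{\mathcal{B}}\le C+|f(0)|$.

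\emph{Necessity.} Assume $f\in\mathcal{B}$ and put $M=\sup_{z\in\mathbb{D}}|f'(z)|(1-|z|^2)$. Let $\gamma$ be the hyperbolic geodesic from $w$ to $z$. Then
\[
|f(z)-f(w)|=\Bigl|\int_\gamma f'(\xi)\,\mathrm{d}\xi\Bigr|\le M\int_\gamma\frac{|\mathrm{d}\xi|}{1-|\xi|^2}.
\]
The last integral is exactly $\beta(z,w)$ in the normalization used here. To see this without geodesic theory, apply the Möbius invariance of both sides to move $w$ to $0$. Then $\gamma$ is the radius $[0,z]$, and
\[
\int_0^{|z|}\frac{\mathrm{d}t}{1-t^2}=\frac12\log\frac{1+|z|}{1-|z|}=\beta(0,z).
\]
The quantity $(1-|\xi|^2)|f'(\xi)|$ is invariant under composition with automorphisms, so $f\circ\varphi_w\in\mathcal{B}$ with the same $M$. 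Since $\beta(z,w)=\beta(0,\varphi_w(z))$, the bound for $w=0$ applied to $f\circ\varphi_w$ gives the general bound with $C=M\le\|f\|_{\mathcal{B}}$.

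Both directions are routine. The only points needing care are the constant normalization and the identity $\int_0^{|z|}\frac{dt}{1-t^2}=\beta(0,z)$; both are immediate from the definition of $\beta$ in the paper. I expect no real obstacle.
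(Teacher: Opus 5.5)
Your proof is correct; the paper gives no argument of its own for this lemma and simply refers to Zhu's book, where the proof is the one you give. For $f\in\mathcal{B}$ you integrate $f'$ along the radius from $0$ and transport the bound by $\varphi_w$, using the invariance of $(1-|\xi|^2)|f'(\xi)|$ and $\beta(z,w)=\beta(0,\varphi_w(z))$; for the converse you use $\beta(z,w)/|z-w|\to 1/(1-|z|^2)$ as $w\to z$.
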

Recall that for $1<p<\infty$ and $\omega\in \mathcal{D}$, there exists $r_0=r_0(\omega)$ such that $\mathrm{BMO}_{\omega}^p=\mathrm{BMO}_{\omega,r}^p$ for each $r\geq r_0$.
The following result characterized the boundedness of Bergman projection $P_\omega$ from $\mathrm{BMO}_{\omega}^p$ to $\mathcal{B}$.

\begin{lemma}[\cite{PR1}]\label{Bloch2}
Let $1<p<\infty$ and $\omega\in \mathcal{D}$. Then $P_\omega: \mathrm{BMO}_{\omega}^p\rightarrow \mathcal{B}$ is bounded.
\end{lemma}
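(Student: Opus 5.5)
The statement is Lemma~\ref{Bloch2}: $P_\omega$ maps $\mathrm{BMO}_{\omega}^p$ boundedly into $\mathcal{B}$. I would use the decomposition $\mathrm{BMO}^p_{\omega}=\mathrm{BA}^p_{\omega}+\mathrm{BO}$ from \cite[Theorem 11]{PPR} and treat the two pieces separately. The main tool is a pointwise estimate of the derivative of the projection,
$$
(P_\omega f)'(z)=\int_{\mathbb{D}} f(\zeta)\,\overline{\partial_{\bar z}B_z^\omega(\zeta)}\,\omega(\zeta)\,\mathrm{d}A(\zeta),
$$
together with the size of $\|(B^\omega_z)'\|_{A^1_\nu}$ from Lemma~\ref{Ne1}(ii). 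With $n=1$ and $\nu=\omega$, that lemma gives
$$
\int_{\mathbb{D}}|(B^\omega_z)'(\zeta)|\,\omega(\zeta)\,\mathrm{d}A(\zeta)\asymp\int_0^{|z|}\frac{\mathrm{d}t}{(1-t)^2}\asymp\frac{1}{1-|z|},
$$
which is exactly the Bloch normalization.

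\textbf{The $\mathrm{BO}$ part.} Here $f_2$ is continuous with $|f_2(\zeta)-f_2(z)|\lesssim 1+\beta(z,\zeta)$ by \eqref{BO}. Since $(B^\omega_z)'$ is orthogonal to constants for $z\ne0$, I would write
$$
(P_\omega f_2)'(z)=\int\bigl(f_2(\zeta)-f_2(z)\bigr)\,\overline{\partial_{\bar z}B_z^\omega(\zeta)}\,\omega(\zeta)\,\mathrm{d}A(\zeta).
$$
Then I would bound $\beta(z,\zeta)$ by \eqref{beta} with a small $\varepsilon$. This reduces the problem to estimating
$$
\int_{\mathbb{D}}|(B^\omega_z)'(\zeta)|\left(\frac{|1-\bar z\zeta|^2}{(1-|z|)(1-|\zeta|)}\right)^{\varepsilon}\omega(\zeta)\,\mathrm{d}A(\zeta).
$$
This weighted kernel integral is the main obstacle. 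For standard weights it is a routine Forelli--Rudin estimate, but $B^\omega_z$ has no closed form when $\omega\in\mathcal{D}$.

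What I would try first is to absorb $(1-|\zeta|)^{-\varepsilon}$ into the weight $\omega_{[-\varepsilon]}$, which lies in $\mathcal{D}$ by Lemma~\ref{Dweight1}. The factor $|1-\bar z\zeta|^{2\varepsilon}$ would be handled either by a dyadic decomposition in $|1-\bar z\zeta|$ or by the $L^2_\omega$ estimate for $(1-\bar\zeta z)^{\varepsilon}(B^\omega_\zeta)'$ mentioned in the introduction. Either way, the target bound is $\lesssim (1-|z|)^{-1}$, with $\varepsilon$ chosen small relative to the doubling constants $\alpha,\beta$ in Lemmas~\ref{Dhat} and~\ref{Dcheck}.

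\textbf{The $\mathrm{BA}$ part.} For $f_1\in \mathrm{BA}^p_\omega$, take an $r$-lattice $\{a_j\}$. Use the subharmonic-type estimate $|(B^\omega_z)'(\zeta)|\lesssim$ its average over $D(a_j,r)$. Then Hölder on each disc, together with \eqref{Dweight2}, gives
$$
\int_{D(a_j,r)}|f_1|\,|(B^\omega_z)'|\,\omega\,\mathrm{d}A\lesssim \|f_1\|_{\mathrm{BA}^p_\omega}\,\sup_{D(a_j,r)}|(B^\omega_z)'|\;\omega(D(a_j,r)).
$$
Summing over $j$ and using \eqref{RD} with $\widetilde\omega$ reduces this to $\|(B^\omega_z)'\|_{A^1_{\widetilde\omega}}\asymp(1-|z|)^{-1}$, again by Lemma~\ref{Ne1}(ii). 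Finally, $|P_\omega f(0)|\lesssim\|f\|_{L^1_\omega}\lesssim\|f\|_{\mathrm{BMO}^p_\omega}$ plus the mean, which controls the constant term. Combining the two pieces yields $\|P_\omega f\|_{\mathcal{B}}\lesssim\|f\|_{\mathrm{BMO}^p_\omega}$.
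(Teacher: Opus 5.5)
\textbf{There is a genuine gap: the one non-routine estimate is not proved.} Your architecture is sound. The splitting $f=f_1+f_2$ from \cite[Theorem 11]{PPR} is fine. So is the lattice/$\widetilde\omega$ argument for the $\mathrm{BA}$ part, and the reduction of the $\mathrm{BO}$ part via the cancellation $\int_{\mathbb{D}}\partial_z B^\omega_\zeta(z)\,\omega(\zeta)\,\mathrm{d}A(\zeta)=(P_\omega 1)'(z)=0$ and \eqref{beta}. The paper gives no proof of this lemma beyond citing \cite{PR1}. In the proof of Theorem~\ref{HLB} it makes exactly your reduction and then defers the resulting bound \eqref{H2} to \cite{PR1} as well.

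After the reduction, the only non-routine step is
$$\int_{\mathbb{D}}|1-\bar z\zeta|^{2\varepsilon}\,|(B^\omega_\zeta)'(z)|\,\omega_{[-\varepsilon]}(\zeta)\,\mathrm{d}A(\zeta)\lesssim (1-|z|)^{\varepsilon-1},$$
and you name two strategies for it without carrying out either.
\begin{itemize}
\item The dyadic one has nothing to work with. Lemma~\ref{Ne1} only controls integral means over full circles, so it says nothing about how the mass of $(B^\omega_\zeta)'(z)$ is distributed in $|1-\bar z\zeta|$. None of the tools at hand give off-diagonal decay of $B^\omega$ for general $\omega\in\mathcal{D}$.
\item The $L^2$ route does work, but only through a specific interpolation in which the powers of $\widehat\omega(z)$ cancel. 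That interpolation is where the proof actually lives.
\end{itemize}

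\textbf{How to close it.} For $\varepsilon$ small, let $\nu=\omega_{[-\varepsilon]}$. By Lemma~\ref{Dweight1}, $\nu\in\mathcal{D}$ and $\widehat\nu(t)\asymp\widehat\omega(t)(1-t)^{-\varepsilon}$. The identity \eqref{1-zzeta}, the bound \eqref{HL10} and orthogonality of monomials in $L^2_\nu$ give, exactly as in the derivation of \eqref{HL11} with $\nu$ as the outer weight,
$$\int_{\mathbb{D}}|(1-\bar z\zeta)(B^\omega_\zeta)'(z)|^2\,\nu(\zeta)\,\mathrm{d}A(\zeta)\lesssim 1+\int_0^{|z|}\frac{\widehat\nu(t)\,\mathrm{d}t}{\widehat\omega(t)^2(1-t)^2}\lesssim\frac{1}{\widehat\omega(z)(1-|z|)^{1+\varepsilon}}.$$
Write $K=(B^\omega_\zeta)'(z)$ and split $|1-\bar z\zeta|^{2\varepsilon}|K|=(|1-\bar z\zeta||K|)^{2\varepsilon}|K|^{1-2\varepsilon}$. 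Apply H\"older with exponents $1/\varepsilon$ and $1/(1-\varepsilon)$. The first factor is the $\varepsilon$-th power of the bound above. The second factor is $\bigl(\int_{\mathbb{D}}|K|^{q}\nu\,\mathrm{d}A\bigr)^{1-\varepsilon}$ with $q=\frac{1-2\varepsilon}{1-\varepsilon}$. By \eqref{TL2}, Lemma~\ref{Ne1}(ii) and Lemma~\ref{Dhat}(ii), with $\beta$ the exponent there, this second factor is $\lesssim\widehat\omega(z)^{\varepsilon}(1-|z|)^{-1+2\varepsilon+\varepsilon^2}$ once $\varepsilon(2+\beta+\varepsilon)<1$. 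In the product the powers of $\widehat\omega(z)$ cancel, leaving exactly $(1-|z|)^{\varepsilon-1}$.

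\textbf{Two minor slips.}
\begin{itemize}
\item $(B^\omega_z)'$, the $\zeta$-derivative, is not orthogonal to constants: its constant term is $\bar z/(2\omega_3)$. The cancellation must be stated for $\partial_z B^\omega_\zeta(z)$, as in your display.
\item As you half-note, $P_\omega f(0)$ is the $\omega$-mean of $f$. It is controlled by $\|f\|_{L^1_\omega}$, not by the $\mathrm{BMO}$ seminorm.
\end{itemize}
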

For $\omega\in \mathcal{D} $, $1<p<\infty$ and  $r_0$ is chosen as in \eqref{Dweight2}, by Lemma \ref{Bloch2}, if $f\in  \mathrm{BMO}_{\omega}^p$, then $\|P_\omega(f)\|_{\mathcal{B}}\lesssim \|f\|_{\mathrm{BMO}_{\omega}^p}$. For $f\in A_\omega^p$, it follows that
$\|f\|_{\mathcal{B}}\lesssim \|f\|_{\mathrm{BMO}_{\omega}^p}$, which implies that
$$\mathrm{BMO}_{\omega}^p\cap H(\mathbb{D})\subset \mathcal{B}.$$
Conversely, it is obvious that $\mathcal{B}\subset\mathrm{BO}\subset \mathrm{BMO}^p_{\omega}$. We conclude that
$$
\mathrm{BMO}_{\omega}^p\cap H(\mathbb{D})=\mathcal{B}.
$$ Moreover, for $f\in H(\mathbb{D})$, it holds that
\begin{equation}\label{G8}
\|f\|_{\mathrm{BMO}_{\omega}^p}\asymp\|f\|_{\mathcal{B}}.
\end{equation}

Now we are ready to deal with the Toeplitz operator with symbol in $L^{\infty} \cap \mathrm{BMO}^p_{\omega,\log}$.

\begin{proposition}\label{main3}
Let $\omega\in\mathcal{D}$ and $1<p<\infty$. If $f \in L^{\infty} \cap \mathrm{BMO}^p_{\omega,\log}$, then $T_{f}^\omega$ is bounded on $A_\omega^1$. Moreover,
$$
\left\|T_f^\omega\right\|_{A_\omega^1\rightarrow A_\omega^1} \lesssim \|f\|_{\infty}+\|f\|_{\mathrm{BMO}^p_{\omega,\log }}.
$$
\end{proposition}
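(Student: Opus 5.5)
Since the adjoint of $T_f^\omega$ on $A_\omega^1$ acts on $(A_\omega^1)^*\simeq\mathcal{B}$ (Lemma \ref{Bloch3}), I will argue by duality. For $g\in H^\infty$ (dense in $A_\omega^1$) and a polynomial $h$, Fubini gives
$$\langle T_f^\omega g,h\rangle_{L^2_\omega}=\int_{\mathbb D} f g\bar h\,\omega\,\mathrm dA=\langle g, P_\omega(\bar f h)\rangle_{L^2_\omega}.$$
Hence it suffices to prove
$$\|P_\omega(\bar f h)\|_{\mathcal B}\lesssim \big(\|f\|_\infty+\|f\|_{\mathrm{BMO}^p_{\omega,\log}}\big)\|h\|_{\mathcal B},\quad h\in\mathcal B.$$
By Lemma \ref{Bloch3} this bounds $|\langle T_f^\omega g,h\rangle|$ by the claimed constant times $\|g\|_{A_\omega^1}\|h\|_{\mathcal B}$, and then $\|T_f^\omega g\|_{A_\omega^1}$ is controlled through the duality norm equivalence. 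A density argument finishes the proof. I will restrict to polynomials $h$ so that $P_\omega(\bar f h)$ is defined, and pass to general $h$ by approximation in the weak-star sense. The Bloch-norm estimate will be checked on the dense class, with constants independent of $h$.

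The key step is to show that $\bar f h\in\mathrm{BMO}^p_\omega$ with norm $\lesssim(\|f\|_\infty+\|f\|_{\mathrm{BMO}^p_{\omega,\log}})\|h\|_{\mathcal B}$, and then to apply Lemma \ref{Bloch2}. Note that $\bar f$ has the same $L^\infty$ and $\mathrm{BMO}^p_{\omega,\log}$ norms as $f$. By \eqref{G8}, $h\in\mathcal B$ gives $h\in\mathrm{BMO}^p_\omega$ with $\|h\|_{\mathrm{BMO}^p_\omega}\asymp\|h\|_{\mathcal B}$. Lemma \ref{logBMO} then gives $\bar f h\in\mathrm{BMO}^p_{\omega,r}$, but its stated bound \eqref{eq3.31} carries the term $|\widehat h_{r,\omega}(0)|\,\mathrm{MO}^p_{\omega,r}(f)$, which is controlled by $\|h\|_{L^p_\omega}$. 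For analytic $h$ this is harmless: $|\widehat h_{r,\omega}(0)|\lesssim|h(0)|+\|h\|_{\mathcal B}\lesssim\|h\|_{\mathcal B}$. This follows from Lemma \ref{Bloch} and \eqref{Dweight2}, since $\beta(0,u)$ is bounded on $D(0,r)$.

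So I will redo the chain \eqref{G3}--\eqref{G4} directly with Lemma \ref{Bloch} in place of \eqref{G4}. Using $|\widehat h_{r,\omega}(z)|\lesssim\|h\|_{\mathcal B}(1+\beta(0,z))$ and $1+\beta(0,z)\asymp\log\frac{e}{1-|z|}$, I get
$$\mathrm{MO}^p_{\omega,r}(\bar fh)(z)\lesssim \|f\|_\infty\|h\|_{\mathcal B}+\|h\|_{\mathcal B}\Big(\log\frac{e}{1-|z|}\Big)\mathrm{MO}^p_{\omega,r}(f)(z)\lesssim\big(\|f\|_\infty+\|f\|_{\mathrm{BMO}^p_{\omega,\log}}\big)\|h\|_{\mathcal B}.$$
Lemma \ref{Bloch2} then yields $\|P_\omega(\bar f h)\|_{\mathcal B}\lesssim\|\bar f h\|_{\mathrm{BMO}^p_\omega}$. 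To cover the additive $|P_\omega(\cdot)(0)|$ part of the Bloch norm, I will use $|P_\omega(\bar fh)(0)|\le\|f\|_\infty\|h\|_{L^1_\omega}/\omega(\mathbb D)$. This is $\lesssim\|f\|_\infty\|h\|_{\mathcal B}$, because Bloch functions lie in every $A^q_\omega$ with norm $\lesssim\|h\|_{\mathcal B}$, by the logarithmic growth and Lemma \ref{Dhat}.

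I expect the main obstacle to be the duality bookkeeping rather than the estimates. The pairing in Lemma \ref{Bloch3} is the limit $\lim_{\rho\to1}\int g_\rho h\,\omega$. I need to verify that $\langle T_f^\omega g,h\rangle$ agrees with $\langle g,P_\omega(\bar fh)\rangle$ in this limiting sense for $g\in H^\infty$. This should follow from dominated convergence, since $fg\in L^\infty$ and $P_\omega$ is self-adjoint on $L^2_\omega$.
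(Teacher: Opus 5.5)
Your proposal is correct and follows essentially the paper's argument: duality $(A_\omega^1)^*\simeq\mathcal B$, the product estimate \eqref{G3} for $\bar f h$ with $h\in\mathcal B$, and Lemma \ref{Bloch2} to land in $\mathcal B$. Your bound $|\widehat h_{r,\omega}(z)|\lesssim\|h\|_{\mathcal B}\log\frac{e}{1-|z|}$ via Lemma \ref{Bloch} just merges the paper's \eqref{G4}--\eqref{G6} steps, and your explicit handling of the conjugate symbol and of $P_\omega(\bar f h)(0)$ is slightly more careful than the paper's write-up. One small citation slip: the inclusion $\mathcal B\subset A^1_\omega$ comes from $\omega\in\widecheck{\mathcal D}$ or from the Littlewood--Paley estimate used in \eqref{G6}, not from Lemma \ref{Dhat}.
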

\begin{proof}
Let  $\omega\in\mathcal{D}$, $g \in \mathcal{B}$ and $r_0$ be chosen as in \eqref{Dweight2}. For $1<p<\infty$ and $r\geq r_0$, \eqref{G8} together with \eqref{eq3.31} and Lemma \ref{Bloch2} implies

\begin{eqnarray}\label{G5}
\left\|T_f^\omega g\right\|_{\mathcal{B}} & =&\|P_\omega(f g)\|_{\mathcal{B}} \lesssim \|f g\|_{\mathrm{BMO}^p_{\omega,r}} \nonumber\\
& \lesssim&\|f\|_{\infty}\|g\|_{\mathrm{BMO}^p_{\omega,r}}+
\|f\|_{\mathrm{BMO}^p_{\omega,\log}}\|g\|_{\mathrm{BMO}^p_{\omega,r}}+|\widehat{g}_{r,\omega}(0)|\sup _{z \in \mathbb{D}}\mathrm{MO}^p_{\omega,r}(f)(z)\nonumber\\
&\asymp& \|f\|_{\infty}\|g\|_{\mathcal{B}}+
\|f\|_{\mathrm{BMO}^p_{\omega,\log}}\|g\|_{\mathcal{B}}+|\widehat{g}_{r,\omega}(0)|\sup _{z \in \mathbb{D}}\mathrm{MO}^p_{\omega,r}(f)(z).
\end{eqnarray}
By \eqref{Dweight2} and \eqref{Littlewood}, we have
\begin{equation}\label{G6}
|\widehat{g}_{r,\omega}(0)|\lesssim \frac{\|g\|_{A_\omega^1}}{\omega(\mathbb{D})}\asymp
\int_{\mathbb{D}}|g^{\prime}(z)|(1-|z|)\omega(z)\mathrm{d}A(z)+|g(0)|\lesssim \|g\|_{\mathcal{B}}.
\end{equation}
Then by \eqref{G5} and \eqref{G6}, we get
$$
\|T_f^\omega\|_{\mathcal{B}\rightarrow\mathcal{B}} \lesssim \|f\|_{\infty}+\|f\|_{\mathrm{BMO}^p_{\omega,\log }}.
$$
Since $\omega\in\mathcal{D}$, it follows from the duality $(A_\omega^1)^* \simeq \mathcal{B}$ via the $A_\omega^2$-paring in Lemma \ref{Bloch3} that $T^\omega_f: A_\omega^1 \rightarrow A_\omega^1$ is bounded and
$$
\|T_f^\omega\|_{A_\omega^1\rightarrow A_\omega^1} \lesssim \|f\|_{\infty}+\|f\|_{\mathrm{BMO}^p_{\omega,\log }}.
$$
This completes the proof.
\end{proof}

Since $\mathrm{BO}_{\log}\subseteq \mathrm{BMO}_{\omega, \log }^p$, if $f\in \mathrm{BO}_{\log}\cap L^\infty$, then $T_f^\omega:
A_\omega^1\rightarrow A_\omega^1$ is bounded.
Notice that $\mathrm{BA}^p_{\omega,\log}\subset \mathrm{BA}^1_{\omega,\log}$ for $1<p<\infty$. By Lemma \ref{BAlog}, we get the following corollary.
\begin{corollary}
Let $\omega\in\mathcal{D}$ and $1<p<\infty$. If $f=f_1+f_2$, where $f_1 \in \mathrm{BO}_{\log } \cap L^{\infty}$ and $f_2\in \mathrm{BA}^p_{\omega,\log}$, then $T_f^\omega: A_\omega^1
\rightarrow A_\omega^1$ is bounded.
\end{corollary}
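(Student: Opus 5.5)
The statement to prove is the Corollary: with $f=f_1+f_2$, $f_1\in \mathrm{BO}_{\log}\cap L^\infty$ and $f_2\in \mathrm{BA}^p_{\omega,\log}$, the operator $T_f^\omega$ is bounded on $A_\omega^1$. The plan is to use linearity, $T_f^\omega=T_{f_1}^\omega+T_{f_2}^\omega$ on the dense class $H^\infty$, and bound each piece separately. Each piece is then controlled by an earlier result: Proposition \ref{main3} for $f_1$ and Lemma \ref{BAlog} for $f_2$.

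\textbf{The term $f_1$.} First check that $\mathrm{BO}_{\log}\subseteq \mathrm{BMO}^p_{\omega,\log}$ for $r\geq r_0$.
\begin{itemize}
\item For $z\in\mathbb{D}$ and $\zeta\in D(z,r)$ we have $|f_1(\zeta)-f_1(u)|\le \Omega_{2r}f_1(z)$ for $u\in D(z,r)$.
\item Averaging over $u$ gives $|f_1(\zeta)-\widehat{(f_1)}_{r,\omega}(z)|\le \Omega_{2r}f_1(z)$, hence $\mathrm{MO}^p_{\omega,r}(f_1)(z)\le \Omega_{2r}f_1(z)$.
\item The $\log$-weighted BO norm is independent of the radius (up to constants), as noted in the paper. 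Therefore $\|f_1\|_{\mathrm{BMO}^p_{\omega,\log}}\lesssim\|f_1\|_{\mathrm{BO}_{\log}}$.
\item It remains to confirm $f_1\in \mathrm{BMO}^p_{\omega}$. This holds because $\mathrm{BO}\subset\mathrm{BMO}^p_\omega$ by the same estimate.
\end{itemize}
So $f_1\in L^\infty\cap \mathrm{BMO}^p_{\omega,\log}$. Proposition \ref{main3} then gives
\[
\|T_{f_1}^\omega\|_{A_\omega^1\to A_\omega^1}\lesssim \|f_1\|_\infty+\|f_1\|_{\mathrm{BO}_{\log}}.
\]

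\textbf{The term $f_2$.} Hölder's inequality with respect to the probability measure $\omega\,\mathrm{d}A/\omega(D(z,r))$ on $D(z,r)$ gives
\[
\frac{1}{\omega(D(z,r))}\int_{D(z,r)}|f_2|\,\omega\,\mathrm{d}A\le\left(\frac{1}{\omega(D(z,r))}\int_{D(z,r)}|f_2|^p\,\omega\,\mathrm{d}A\right)^{1/p}.
\]
Multiplying by $\log\frac{e}{1-|z|}$ and taking the supremum over $z$ yields $\mathrm{BA}^p_{\omega,\log}\subset\mathrm{BA}^1_{\omega,\log}$. This is exactly the hypothesis of Lemma \ref{BAlog}, with any fixed $r\ge r_0$ for which \eqref{Dweight2} guarantees $\omega(D(z,r))>0$. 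Note that the lemma is stated for $r\in(0,1)$, but the radius is immaterial by the $r$-independence remark. Lemma \ref{BAlog} then shows $T_{f_2}^\omega$ is bounded on $A_\omega^1$.

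\textbf{Conclusion and main obstacle.} Since $f=f_1+f_2\in L^1_\omega$, the operator $T_f^\omega$ is defined on $H^\infty$ and equals $T_{f_1}^\omega+T_{f_2}^\omega$ there. Density of $H^\infty$ in $A_\omega^1$ then extends the bound to all of $A_\omega^1$. The only delicate point is the radius bookkeeping in the inclusion $\mathrm{BO}_{\log}\subseteq\mathrm{BMO}^p_{\omega,\log}$: it requires comparing $\Omega_{2r}$ with $\Omega_r$ in the log-weighted setting. I would handle this by chaining boundedly many Bergman discs of radius $r$ between the points. Along such a chain $\log\frac{e}{1-|w|}\asymp\log\frac{e}{1-|z|}$, since $1-|w|\asymp 1-|z|$ for $w$ within bounded hyperbolic distance of $z$.
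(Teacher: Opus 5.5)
Your proposal is correct and follows the paper's route exactly: you split $T_f^\omega=T_{f_1}^\omega+T_{f_2}^\omega$, treat $f_1$ via the inclusion $\mathrm{BO}_{\log}\subseteq\mathrm{BMO}^p_{\omega,\log}$ and Proposition~\ref{main3}, and treat $f_2$ via H\"older's inclusion $\mathrm{BA}^p_{\omega,\log}\subset\mathrm{BA}^1_{\omega,\log}$ and Lemma~\ref{BAlog}. The radius issue you flag as the main obstacle goes away if you pass through the center, since $|f_1(\zeta)-f_1(u)|\le|f_1(\zeta)-f_1(z)|+|f_1(z)-f_1(u)|\le 2\,\Omega_r f_1(z)$ for $\zeta,u\in D(z,r)$ (your bound by $\Omega_{2r}f_1(z)$ without the factor $2$ is not literally right, but the slip is harmless).
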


To obtain an equivalent conditon for the boundedness of Toeplitz operator with co-analytic symbol on $A_\omega^1$, we need the following results.

For $\omega, \nu$ in $\mathcal{D}$, it is shown in \cite{Perala} that there exists a unique linear operator $R^{\omega,\nu}$ on $H(\mathbb{D})$ equipped with the compact-open topology, called the general fractional derivative, which satisfies:
\begin{itemize}
\item [(i)] $R^{\omega, \nu}: H(\mathbb{D}) \rightarrow H(\mathbb{D}) $ is continuous;
\item [(ii)] $R^{\omega, \nu} B_z^\omega(\xi)=B_z^\nu(\xi)$;
\item [(iii)] For $f=\sum f_k z^k\in H(\mathbb{D})$, it holds that
\end{itemize}
\begin{equation}\label{Rwv}
R^{\omega, \nu} f(z)=\int_{\mathbb{D}} f(\zeta) \overline{B_z^\nu(\zeta)} \omega(\zeta) \mathrm{d}
A(\zeta)=\sum_{k=0}^{\infty}\left(\frac{\omega_{2k+1}}{\nu_{2k+1}}\right) f_k z^k.
\end{equation}
For  $\omega, \nu$ in $\mathcal{D}$, the general fractional derivative $R^{\omega,\nu}$ is a bijection on $H(\mathbb{D})$, and satisfies $(R^{\omega, \nu})^{-1}=R^{\nu,\omega}$. Moreover, it follows directly from \cite[Lemma 3]{PR3} that for
$\omega\in\mathcal{D}$ and $\beta>0$, we have $\omega_{[\beta]} \in \mathcal{D}$. In particular, if $\nu=\omega_{[\beta]}$, we briefly denote $R^{\omega,\nu}$ by $R^{\omega,\beta}$.

We denote by $C_c(\mathbb{D})$ the space of continuous functions with compact support in $\mathbb{D}$. To characterize the boundedness of $T_f^\omega$ on $A_\omega^1$, we need the following lemma which is inspired by \cite{ABT}.
\begin{lemma}
Let $\omega\in \mathcal{D}$ and $\beta>0$. Then $P_{\omega_{[\beta]}}(C_c(\mathbb{D}))$ is dense in $A_\omega^1$.
\end{lemma}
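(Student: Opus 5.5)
The plan is to use Hahn--Banach together with the duality of Lemma \ref{Bloch3}. Suppose $P_{\omega_{[\beta]}}(C_c(\mathbb{D}))$ is not dense in $A_\omega^1$. Then some nonzero bounded functional on $A_\omega^1$ vanishes on this subspace. By Lemma \ref{Bloch3} the functional is given by a Bloch function $g\in\mathcal{B}$, $g\neq 0$, through the $A_\omega^2$-pairing, and it satisfies
$$\langle P_{\omega_{[\beta]}}\phi, g\rangle_{L^2_\omega}=0,\quad \phi\in C_c(\mathbb{D}).$$
We must then show that $g\equiv 0$.

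First I check that $P_{\omega_{[\beta]}}\phi$ lies in $A_\omega^1$, and is in fact bounded and well behaved, for $\phi\in C_c(\mathbb{D})$. Since $\phi$ has compact support in $\{|\zeta|\le \rho\}$, we can write $P_{\omega_{[\beta]}}\phi(z)=\int \phi(\zeta)B^{\omega_{[\beta]}}_\zeta(z)\omega_{[\beta]}(\zeta)\,\mathrm{d}A(\zeta)$. For $|\zeta|\le\rho$ the kernels $B_\zeta^{\omega_{[\beta]}}$ have Taylor coefficients $\bar\zeta^m/(2(\omega_{[\beta]})_{2m+1})$. By Lemma \ref{Dhat}(v), applied with $\omega_{[\beta]}\in\mathcal{D}$, these grow at most polynomially in $m$ times $\rho^m$. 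Hence $P_{\omega_{[\beta]}}\phi$ is analytic on a larger disc, and in particular it lies in $A_\omega^1$.

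Next I compute the pairing. Because $P_{\omega_{[\beta]}}\phi$ extends analytically across $\overline{\mathbb{D}}$, the limit in Lemma \ref{Bloch3} is an honest integral, and Fubini applies: the kernel series converges uniformly for $|\zeta|\le\rho$, and $g\in\mathcal{B}\subset A^1_\omega$. This gives
$$0=\int_{\mathbb{D}}\phi(\zeta)\Big(\int_{\mathbb{D}} B^{\omega_{[\beta]}}_\zeta(z)\,\overline{g(z)}\,\omega(z)\,\mathrm{d}A(z)\Big)\omega_{[\beta]}(\zeta)\,\mathrm{d}A(\zeta),$$
up to the conjugation convention of the pairing. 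The inner integral equals $\overline{R^{\omega,\beta}g(\zeta)}$ by \eqref{Rwv}, after conjugating and using the symmetry $B_\zeta^\nu(z)=\overline{B_z^\nu(\zeta)}$. To justify this, expand $g=\sum g_kz^k$ and integrate term by term using the orthogonality of monomials; the term-by-term interchange is legitimate because for fixed $\zeta$ the series for $B_\zeta^{\omega_{[\beta]}}$ converges uniformly on $\overline{\mathbb{D}}$. So $\int\phi\,\overline{R^{\omega,\beta}g}\,\omega_{[\beta]}\,\mathrm{d}A=0$ for all $\phi\in C_c(\mathbb{D})$. Since $\omega_{[\beta]}>0$ a.e. on annuli where $\omega$ is not null, and $R^{\omega,\beta}g$ is continuous, we get $R^{\omega,\beta}g=0$ on a set of positive measure. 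It is therefore identically $0$ by analyticity. Alternatively, choose $\phi(\zeta)=\psi(|\zeta|)\zeta^k$ to read off directly that each coefficient $(\omega_{2k+1}/(\omega_{[\beta]})_{2k+1})g_k$ vanishes. Since $\omega_{2k+1}>0$, every $g_k=0$, so $g\equiv0$, a contradiction. Equivalently, the injectivity of $R^{\omega,\beta}$ does the job.

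The main obstacle is the justification step: the pairing in Lemma \ref{Bloch3} is defined via the limit over $f_r$, and we must identify it with the plain integral and exchange the order of integration. I plan to handle this by the analytic-extension observation above, since it makes $(P_{\omega_{[\beta]}}\phi)_r\to P_{\omega_{[\beta]}}\phi$ uniformly on $\overline{\mathbb{D}}$, while $g\in L^1_\omega$. I would use the coefficient-testing version with radial cutoffs $\psi$ to keep the computation fully explicit.
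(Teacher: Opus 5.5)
Your proposal is correct and follows the paper's proof: you annihilate $P_{\omega_{[\beta]}}(C_c(\mathbb{D}))$ by a Bloch function via Lemma \ref{Bloch3}, use Fubini and \eqref{Rwv} to get $\int\phi\,\overline{R^{\omega,\beta}g}\,\omega_{[\beta]}\,\mathrm{d}A=0$ for all $\phi\in C_c(\mathbb{D})$, and conclude $g\equiv0$ from the injectivity of $R^{\omega,\beta}$ (or directly from the coefficients). Your additional justifications fill in steps the paper leaves implicit: the analytic extension of $P_{\omega_{[\beta]}}\phi$ past $\overline{\mathbb{D}}$ turns the limit pairing into an honest integral and licenses Fubini, and the term-by-term computation identifies the inner integral with $\overline{R^{\omega,\beta}g}$.
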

\begin{proof}
It is obvious that $P_{\omega_{[\beta]}}(C_c(\mathbb{D}))\subset H^\infty \subset A_\omega^1$. Since $\omega\in \mathcal{D}$, by Lemma \ref{Bloch3}, we only need to show that if $h\in\mathcal{B}$ satisfies
$$
\int_{\mathbb{D}}P_{\omega_{[\beta]}}\phi(\zeta)\overline{h(\zeta)}\omega(\zeta)\mathrm{d}A(\zeta)=0, \quad{\forall\phi\in C_c(\mathbb{D})},
$$
then $h\equiv0$. In fact, by Fubini's theorem and the representation of $R^{\omega,\beta}$ in \eqref{Rwv}, we have
$$
\begin{aligned}
0=&\int_{\mathbb{D}}P_{\omega_{[\beta]}}\phi(\zeta)\overline{h(\zeta)}\omega(\zeta)\mathrm{d}A(\zeta)\\
=&\int_{\mathbb{D}}\int_{\mathbb{D}}
\phi(u)\overline{B_\zeta^{\omega_{[\beta]}}(u)}\omega_{[\beta]}(u)\mathrm{d}A(u)\overline{h(\zeta)}\omega(\zeta)\mathrm{d}A(\zeta)\\
=&\int_{\mathbb{D}}\phi(u)\overline{R^{\omega,\beta}h(u)}(1-|u|)^\beta\omega(u)\mathrm{d}A(u),\quad{\forall\phi\in C_c(\mathbb{D})}.
\end{aligned}
$$
It follows that $R^{\omega,\beta}h= 0$ almost everywhere on $\mathbb{D}$. Since $R^{\omega,\beta}h\in H(\mathbb{D})$, then $R^{\omega,\beta}h\equiv 0$ on $\mathbb{D}$. By the bijectivity of $R^{\omega,\beta}$ on $H(\mathbb{D})$, we have $h\equiv 0$ on $\mathbb{D}$. This completes the proof.
\end{proof}
Recall that for a finite Borel measure $\mu$, the Toeplitz operator $T^\omega_{\mu}$ induced by $\mu$ is defined as
$$
T^\omega_{\mu}(f)(z)=\int_{\mathbb{D}} f(\zeta)\overline{B_z^\omega(\zeta)}\mathrm{d}\mu(\zeta).
$$
The following lemma is a useful tool for the characterization of the boundedness of an operator acting on $A_\omega^1$, which is a generalization of \cite[Theorem 1.4]{ABT}. The proof is a minor modification of the original one and is omitted.

\begin{lemma}\label{Ab}
Let $\omega\in\mathcal{D}$, $\beta>0$ and $A$ be a linear operator from $H^\infty$ to $H(\mathbb{D})$. For the following two assertions:
\begin{itemize}
\item[(i)] $A$ extends to a bounded operator on $A_\omega^1$,
\item[(ii)] $\sup_{\zeta\in\mathbb{D}}\big\|A \widetilde{B}_{\zeta}^{\omega_{[\beta]}}\big\|_{A_\omega^1}<\infty,$
\end{itemize}
it holds that (i) implies (ii). Moreover, (ii) implies (i) in the following two cases.
\begin{itemize}
\item[(a)] The identity
$$
\int_{\mathbb{D}} A \widetilde{B}_{\zeta}^{\omega_{[\beta]}}(z)g(\zeta)\omega(\zeta)\mathrm{d}A(\zeta)=Ag(z)
$$
holds for all $z\in\mathbb{D}$ and $g\in P_{\omega_{[\beta]}}(C_c(\mathbb{D}))$.

\item[(b)] $A=T^\omega_\mu$, where $\mu$ is a complex Borel measure on $\mathbb{D}$.
\end{itemize}
\end{lemma}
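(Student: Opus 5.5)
We plan to follow the scheme of \cite[Theorem 1.4]{ABT}, adapted to the $\mathcal{D}$-weighted setting through the density lemma above and the duality $(A_\omega^1)^*\simeq\mathcal{B}$ of Lemma \ref{Bloch3}.

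\textbf{(i)$\Rightarrow$(ii).} It suffices to show that $\sup_{\zeta\in\mathbb{D}}\|\widetilde{B}_{\zeta}^{\omega_{[\beta]}}\|_{A_\omega^1}<\infty$. By Lemma \ref{Ne1}(ii) with $n=0$, $p=1$ and $\nu=\omega$,
$$
\|B_\zeta^{\omega_{[\beta]}}\|_{A_\omega^1}\asymp\int_0^{|\zeta|}\frac{\widehat{\omega}(t)}{\widehat{\omega_{[\beta]}}(t)(1-t)}\,\mathrm{d}t .
$$
Since $\widehat{\omega_{[\beta]}}(t)\asymp\widehat{\omega}(t)(1-t)^\beta$ (use $\omega_{[\beta]}\in\mathcal{D}$, or Lemma \ref{Dweight1}-type estimates), the integral is $\asymp(1-|\zeta|)^{-\beta}$. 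Multiplying by $(1-|\zeta|)^\beta$ gives the uniform bound. Hence $\|A\widetilde{B}_{\zeta}^{\omega_{[\beta]}}\|_{A_\omega^1}\le\|A\|\sup_\zeta\|\widetilde{B}_{\zeta}^{\omega_{[\beta]}}\|_{A_\omega^1}$.

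\textbf{(ii)$\Rightarrow$(i) in case (a).} Let $M$ denote the supremum in (ii) and take $g=P_{\omega_{[\beta]}}\phi$ with $\phi\in C_c(\mathbb{D})$. Using the identity in (a), Minkowski's integral inequality and Fubini's theorem give
$$
\|Ag\|_{A_\omega^1}\le\int_{\mathbb{D}}\|A\widetilde{B}_\zeta^{\omega_{[\beta]}}\|_{A_\omega^1}|g(\zeta)|\omega(\zeta)\,\mathrm{d}A(\zeta)\le M\|g\|_{A_\omega^1}.
$$
Joint measurability of $(z,\zeta)\mapsto A\widetilde{B}_\zeta^{\omega_{[\beta]}}(z)$ must be checked; it holds because $\zeta\mapsto\widetilde B_\zeta$ is continuous into $H^\infty$ and $A\widetilde B_\zeta$ can be expanded in the monomials. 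The preceding lemma shows $P_{\omega_{[\beta]}}(C_c(\mathbb{D}))$ is dense in $A_\omega^1$, so $A$ extends boundedly with norm at most $M$.

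\textbf{Case (b).} Here the plan is to verify the identity in (a) for $A=T_\mu^\omega$. For $g=P_{\omega_{[\beta]}}\phi$, write $g(u)=\int_{\mathbb{D}}B_\zeta^{\omega_{[\beta]}}(u)\,\overline{\cdots}$, i.e.
$$
g(u)=\int_{\mathbb{D}}\phi(\zeta)B^{\omega_{[\beta]}}_{\zeta}(u)(1-|\zeta|)^\beta\omega(\zeta)\,\mathrm{d}A(\zeta)=\int_{\mathbb{D}}\widetilde{B}_\zeta^{\omega_{[\beta]}}(u)\phi(\zeta)\omega(\zeta)\,\mathrm{d}A(\zeta).
$$
Since $\phi$ has compact support, the kernels are uniformly bounded on $\mathrm{supp}\,\phi\times\mathbb{D}$, and $\mu$ is finite. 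Fubini's theorem then gives
$$
T_\mu^\omega g(z)=\int_{\mathbb{D}}T_\mu^\omega\widetilde{B}_\zeta^{\omega_{[\beta]}}(z)\,\phi(\zeta)\omega(\zeta)\,\mathrm{d}A(\zeta).
$$
Running the Minkowski argument with $\phi$ in place of $g$ yields $\|T_\mu^\omega g\|\le M\int|\phi|\omega\,\mathrm{d}A$, which is not yet $\|g\|_{A_\omega^1}$. So we first reduce to (a) by replacing $\phi$ with $g$. This requires the reproducing identity $g(u)=\int\widetilde B_\zeta^{\omega_{[\beta]}}(u)g(\zeta)\omega(\zeta)\,\mathrm{d}A(\zeta)$, which is exactly $P_{\omega_{[\beta]}}g=g$ for $g\in H^\infty$, valid since $\omega_{[\beta]}\in\mathcal{D}$ and $H^\infty\subset A^2_{\omega_{[\beta]}}$. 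The Fubini interchange is justified because $\int|B_\zeta^{\omega_{[\beta]}}(u)|(1-|\zeta|)^\beta\omega(\zeta)\,\mathrm{d}A(\zeta)\lesssim\log$-type growth is integrable against the finite measure $|\mu|$ times $|B_z^\omega|$ for fixed $z$. We expect this justification of Fubini in case (b), namely controlling $\int\!\int|B^\omega_z(u)||B_\zeta^{\omega_{[\beta]}}(u)|(1-|\zeta|)^\beta|g(\zeta)|\omega(\zeta)\,\mathrm{d}A(\zeta)\,\mathrm{d}|\mu|(u)$ via Lemma \ref{Ne1}, to be the main obstacle.
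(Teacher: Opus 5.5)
Your (i)$\Rightarrow$(ii) and your treatment of case (a) are sound, with two small caveats. First, since $A$ is merely linear, you cannot expand $A\widetilde B_\zeta^{\omega_{[\beta]}}$ in monomials. Joint measurability should instead come from measurability in $\zeta$ for fixed $z$ (implicit in the identity (a)) together with analyticity in $z$. Second, density of $P_{\omega_{[\beta]}}(C_c(\mathbb{D}))$ only extends the restriction of $A$ to that subspace.

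The genuine gap is in case (b). Your interchange of the $\mu$-integral with the $\zeta$-integral over all of $\mathbb{D}$ needs
\[
\int_{\mathbb{D}}\Big(\int_{\mathbb{D}}|\widetilde{B}_{\zeta}^{\omega_{[\beta]}}(u)|\,|g(\zeta)|\,\omega(\zeta)\,\mathrm{d}A(\zeta)\Big)|B_z^\omega(u)|\,\mathrm{d}|\mu|(u)<\infty .
\]
For $g\equiv 1$, which lies in $P_{\omega_{[\beta]}}(C_c(\mathbb{D}))$, the inner integral equals $\|B_u^{\omega_{[\beta]}}\|_{A^1_{\omega_{[\beta]}}}\asymp\log\frac{e}{1-|u|}$. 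A finite measure need not integrate this. Take $z=0$, where $B_0^\omega$ is a nonzero constant, and $\mathrm{d}\mu(u)=(1-|u|)^{-1}\log^{-2}\frac{e}{1-|u|}\,\mathrm{d}A(u)$; then the double integral diverges. The same already happens for $\omega\equiv1$ and $\mathrm{d}\mu=\bar f\,\mathrm{d}A$ with a suitable $f\in A^1$, the kind of measure used in Theorem \ref{TLB}. So your claimed justification is false, and no estimate on $\mu$ alone can rescue this route.

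The missing idea is that the integrability must come from hypothesis (ii). Fix $z$ and put
\[
F_z(\zeta)=\overline{T^\omega_\mu B^{\omega_{[\beta]}}_\zeta(z)}=\int_{\mathbb{D}}B_u^{\omega_{[\beta]}}(\zeta)B_z^\omega(u)\,\mathrm{d}\bar\mu(u),
\]
which is analytic in $\zeta$. Point evaluation at $z$ is bounded on $A^1_\omega$, say by $C_z$, so (ii) gives
\[
\int_{\mathbb{D}}|F_z|\,\omega_{[\beta]}\,\mathrm{d}A=\int_{\mathbb{D}}|T^\omega_\mu\widetilde B_\zeta^{\omega_{[\beta]}}(z)|\,\omega(\zeta)\,\mathrm{d}A(\zeta)\le C_z M\,\omega(\mathbb{D}).
\]
Hence $F_z\in A^1_{\omega_{[\beta]}}$ and $P_{\omega_{[\beta]}}F_z=F_z$.

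Now let $g=P_{\omega_{[\beta]}}\phi$. Insert $g(\zeta)=\int_{\mathbb{D}}\phi(w)\widetilde B_w^{\omega_{[\beta]}}(\zeta)\omega(w)\,\mathrm{d}A(w)$ into the left side of (a) and swap $\zeta$ and $w$. This swap is harmless: $\widetilde B_w^{\omega_{[\beta]}}$ is bounded on $\mathbb{D}$ uniformly for $w\in\mathrm{supp}\,\phi$, and $F_z\in L^1_{\omega_{[\beta]}}$. The inner integral is $(1-|w|)^\beta\,\overline{P_{\omega_{[\beta]}}F_z(w)}=T^\omega_\mu\widetilde B_w^{\omega_{[\beta]}}(z)$. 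So the left side of (a) equals $\int_{\mathbb{D}}\phi(w)\,T^\omega_\mu\widetilde B_w^{\omega_{[\beta]}}(z)\,\omega(w)\,\mathrm{d}A(w)$, which is $T^\omega_\mu g(z)$ by your correct compact-support Fubini. Thus (b) genuinely reduces to (a).

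The same device gives the identity for every $g\in H^\infty$. Read off $\int_{\mathbb{D}}\overline{F_z(\zeta)}\zeta^m\omega_{[\beta]}(\zeta)\,\mathrm{d}A(\zeta)=T^\omega_\mu p_m(z)$, with $p_m(u)=u^m$, from the Taylor coefficients of $F_z$. Then pass from the dilates $g_\rho$ to $g$ by bounded convergence on both sides. This is what yields boundedness of $T^\omega_\mu$ on all of $H^\infty$, rather than only on $P_{\omega_{[\beta]}}(C_c(\mathbb{D}))$.
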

Now we are in a position to prove Theorem \ref{TLB}.

\vspace{5pt}
\noindent \textbf{Proof of Theorem \ref{TLB}.}
We first prove that (i) implies (ii). Suppose $T_{\bar{f}}^\omega$ is bounded on $A_\omega^1$. Then there exists a positive constant $C=C(\omega)$ such that
$$
|\langle T_{\bar{f}}^\omega(g),h\rangle_{L_\omega^2}|\leq C\|g\|_{A_\omega^1}\|h\|_{\mathcal{B}},\quad{g,h\in H^\infty}.
$$
By Fubini's theorem, we obtain
\begin{equation}\label{TL1}
\left|\int_{\mathbb{D}}\overline{f(\zeta)}g(\zeta)\overline{h(\zeta)}\omega(\zeta)\mathrm{d}A(\zeta)\right|\leq C\|g\|_{A_\omega^1}\|h\|_{\mathcal{B}},\quad{g,h\in
H^\infty}.
\end{equation}
For each $z\in\mathbb{D}$, let $g_{1,z}(\zeta)=\zeta (B_z^\omega)^{\prime}(\zeta)$ and $h_z(\zeta)=\log\frac{e}{1-\bar{z}\zeta}$ for $\zeta\in\mathbb{D}$.
Then $\|h_z\|_{\mathcal{B}}\lesssim 1$ uniformly in $z$. By Lemma \ref{Ne1}, we have
$$
\|g_{1,z}\|_{A_\omega^1}\leq\|(B_z^\omega)^{\prime}\|_{A_\omega^1}\asymp \frac{1}{1-|z|},\quad{z\in\mathbb{D}}.
$$
Plugging $g_{1,z}$ and $h_z$ into \eqref{TL1} gives
\begin{equation}\label{TL}
\left|\int_{\mathbb{D}}f(\zeta)\overline{\zeta (B_z^\omega)^{\prime}(\zeta)}\Big(\log\frac{e}{1-\bar{z}\zeta}\Big)\omega(\zeta)\mathrm{d}A(\zeta)\right|
\lesssim \frac{1}{1-|z|},\quad{z\in\mathbb{D}}.
\end{equation}
Furthermore, Lemma \ref{Ne1} shows
\begin{equation}\label{TL5}
\|B_z^\omega\|_{A_\omega^1}\asymp \log\frac{e}{1-|z|},\quad z\in \mathbb{D}.
\end{equation}
Let $g_{2,z}(\zeta)=B_z^\omega(\zeta)$ for $\zeta\in\mathbb{D}$. Plugging $g_{2,z}$ and $h_z$ into \eqref{TL1}, it follows from the reproducing formula and \eqref{TL5} that $f\in H^\infty$.
Notice that
\begin{equation}\label{TL2}
	\zeta (B_z^\omega)^{\prime}(\zeta)=\overline{z(B_{\zeta}^\omega)^{\prime}(z)},\quad{z,\zeta \in \mathbb{D}}.
\end{equation}
Since the function $F_z(\zeta)=f(\zeta)\log\frac{e}{1-\bar{z}\zeta}$  is analytic for $\zeta\in\mathbb{D}$ and
$$
\|F_z\|_{A_\omega^1}=\int_{\mathbb{D}}|f(\zeta)|\Big(\log\frac{e}{|1-\bar{z}\zeta|}\Big)\omega(\zeta)\mathrm{d}A(\zeta)\leq \Big(\log\frac{e}{1-|z|}\Big)\|f\|_{A_\omega^1}<\infty,
$$
the reproducing formula together with \eqref{TL} and \eqref{TL2} shows that
$$
\begin{aligned}
\frac{1}{1-|z|}\gtrsim
&\left|z\int_{\mathbb{D}}F_z(\zeta)(B_{\zeta}^\omega)^{\prime}(z)\omega(\zeta)\mathrm{d}A(\zeta)\right|
=\left|zF_z^{\prime}(z)\right|\\
=&\left|z\bigg( f^{\prime}(z)\log\frac{e}{1-|z|}+f(z)\frac{\bar{z}}{1-|z|}\bigg) \right|.
\end{aligned}
$$
It follows that
$$
\left|(1-|z|)f^{\prime}(z)\log\frac{e}{1-|z|}+\bar{z}f(z)\right|\lesssim \frac{1}{|z|}.$$
This combined with the triangle inequality and the fact that $f\in H^\infty$ gives
$$
\begin{aligned}
(1-|z|)|f^{\prime}(z)|\log\frac{e}{1-|z|}
&\leq \Big|(1-|z|)f^{\prime}(z)\log\frac{e}{1-|z|}+\bar{z}f(z)\Big|+|\bar{z}f(z)|\\
&\lesssim \frac{1}{|z|}+|f(z)| \lesssim 1+\|f\|_{\infty}\lesssim 1, \quad{|z|\rightarrow 1^-}.
\end{aligned}
$$
Thus, $f\in \mathcal{LB}$, as desired.

Next, we show that (ii) implies (i).
Note that $\mathcal{LB}\subset\mathcal{B}$. For every $g\in \mathcal{B}$, the triangle inequality and Lemma \ref{Bloch} yield
$$
|(fg)^{\prime}(z)|\leq\|f\|_{\infty}|g^{\prime}(z)|+|g(z)||f^{\prime}(z)|
\lesssim \|f\|_{\infty}|g^{\prime}(z)|+\beta(0,z)|f^{\prime}(z)|\|g\|_{\mathcal{B}}+|g(0)||f^{\prime}(z)|,\quad{z\in\mathbb{D}}.
$$
It follows that
\begin{equation}\label{TL6}
(1-|z|)|(fg)^{\prime}(z)|\lesssim
\|f\|_{\infty}\|g\|_{\mathcal{B}}+(1-|z|)\log\frac{e}{1-|z|}|f^{\prime}(z)|\|g\|_{\mathcal{B}}+|g(0)|\|f\|_{\mathcal{B}},\quad{z\in\mathbb{D}},
\end{equation}
which implies that $f\mathcal{B}\subseteq\mathcal{B}$. Moreover, \eqref{TL6} implies that for $g\in \mathcal{B}$ we have
$$
\|T_f^\omega(g)\|_{\mathcal{B}}=\|P_\omega(fg)\|_{\mathcal{B}}=\|fg\|_{\mathcal{B}}\lesssim \|g\|_{\mathcal{B}}.
$$
Combining this with the duality relationship between $A_\omega^1$ and $\mathcal{B}$ as in Lemma \ref{Bloch3}, we conclude that $T_{\bar{f}}^\omega: A_\omega^1 \rightarrow A_\omega^1$ is bounded, as desired.

The equivalence between (i) and (iii) is a consequence of Lemma \ref{Ab}. The proof is completed.
\qed

\section{The characterizations of bounded Hankel operators}
In this section we turn to investigate the boundedness of the Hankel operator $H_{\bar{f}}^\omega:A_\omega^1\rightarrow L_\omega^1$ with co-analytic symbols.

\vspace{5pt}
\noindent \textbf{Proof of Theorem \ref{HLB}.}
Necessity. Suppose $f\in A_\omega^2$ and $H_{\bar{f}}^\omega$ is bounded from $A_\omega^1$ to $L_\omega^1$. Then there exists a positive constant $C=C(\omega)$ such
that
$$
\|H_{\bar{f}}^\omega(g)\|_{L_\omega^1}\leq C\|g\|_{A_\omega^1}, \quad{g\in H^\infty}.
$$
By the duality of $(L_\omega^1)^*$ with $L^\infty$ under the $L_\omega^2$-pairing, we obtain
\begin{equation}\label{HL4}
\left|\int_{\mathbb{D}}H_{\bar{f}}^\omega(g)(\zeta)\overline{h(\zeta)}\omega(\zeta)\mathrm{d}A(\zeta)\right|
\leq\|H_{\bar{f}}^\omega(g)\|_{L_\omega^1}\|h\|_{\infty}
\leq C\|g\|_{A_\omega^1}\|h\|_{\infty},\quad g\in H^\infty, h\in L^\infty.
\end{equation}
Let $H_0^\infty$ denote the space of all functions in $H^{\infty}$ that vanish at the origin. For $h_1\in H_0^\infty$, it holds that $P_\omega(\overline{h_1})=0$. It follows that for $g\in H^\infty$ we have
\begin{eqnarray}\label{HL5}
\int_{\mathbb{D}}H_{\bar{f}}^\omega(g)(\zeta)h_1(\zeta)\omega(\zeta)\mathrm{d}A(\zeta)
&=&\int_{\mathbb{D}}\overline{f(\zeta)}g(\zeta)h_1(\zeta)\omega(\zeta)\mathrm{d}A(\zeta)-\int_{\mathbb{D}} P_\omega(\bar{f}g)(\zeta)h_1(\zeta)\omega(\zeta)\mathrm{d}A(\zeta)\nonumber\\
&=&\int_{\mathbb{D}}\overline{f(\zeta)}g(\zeta)h_1(\zeta)\omega(\zeta)\mathrm{d}A(\zeta)-\int_{\mathbb{D}}\overline{f(\zeta)}g(\zeta) \overline{P_\omega(\widebar{h_1})(\zeta)} \omega(\zeta)\mathrm{d}A(\zeta)\nonumber\\
&=&\int_{\mathbb{D}}\overline{f(\zeta)}g(\zeta)h_1(\zeta)\omega(\zeta)\mathrm{d}A(\zeta), \quad{h_1\in H_0^\infty}.
\end{eqnarray}
Similarly, we also have
\begin{equation}\label{HL6}
\int_{\mathbb{D}}H_{\bar{f}}^\omega(g)(\zeta)\overline{h_2(\zeta)}\omega(\zeta)\mathrm{d}A(\zeta)=0,\quad{g,h_2\in H^\infty}.
\end{equation}
For $g\in H^\infty$, let $g_1(\zeta)=\zeta g(\zeta)$ for $\zeta \in\mathbb{D}$. Plugging the function $h_1(\zeta)=\zeta$ into \eqref{HL5}, we conclude from \eqref{HL4} that
$$
\left|\int_{\mathbb{D}}\overline{f(\zeta)}g_1(\zeta)\omega(\zeta)\mathrm{d}A(\zeta)\right|\leq C\|g\|_{A_\omega^1}\lesssim \|g_1\|_{A_\omega^1}, \quad{g_1\in H_0^\infty}.
$$
The above inequality also holds if $g$ is the constant function. It follows that the densely defined linear functional
$$F_f(g)=\int_{\mathbb{D}}\overline{f(\zeta)}g(\zeta)\omega(\zeta)\mathrm{d}A(\zeta),\quad{g\in H^\infty}$$
extends to a bounded linear functional on $A_\omega^1$. It follows from Lemma \ref{Bloch3} that $f\in\mathcal{B}$.
Fix $z\in \mathbb{D}$. Letting $h_2$ in \eqref{HL5} be the function $h_{2,z}(\zeta)=\log (1-\bar{z} \zeta)$, we deduce from \eqref{HL4} and \eqref{HL6} that
\begin{eqnarray}\label{HL}
\left|\int H_{\bar{f}}^\omega(g)(\zeta)\log(1-\bar{z}\zeta)  \omega(\zeta) \mathrm{d} A(\zeta)\right|
&=&\left|\int H_{\bar{f}}^\omega(g)(\zeta)\big(\log(1-\bar{z}\zeta)- \overline{\log(1-\bar{z}\zeta)}\big) \omega(\zeta) \mathrm{d} A(\zeta)\right|\nonumber\\
&\leq& 2C\|g\|_{A_\omega^1} \sup_{\zeta \in\mathbb{D}}|\mathrm{Im}\log (1-\bar{z} \zeta)|
\lesssim \|g\|_{A_\omega^1},\,\,\,{g \in H^{\infty}}.
\end{eqnarray}
\eqref{HL5} combined with \eqref{HL} shows that
$$
\left|\int \overline{f(\zeta)} g(\zeta) \log (1-\bar{z} \zeta)\omega(\zeta) \mathrm{d} A(\zeta)\right| \leq C\|g\|_{A_\omega^1}, \quad g \in H^{\infty}.
$$
Choosing $g_{2,z}(\zeta)=\zeta (B_z^\omega)^{\prime}(\zeta)$, we conclude from Lemma \ref{Ne1} and the above inequality that
\begin{equation}\label{HL7}
\left|\int f(\zeta) \overline{\zeta (B_z^\omega)^{\prime}(\zeta)}  \Big(\log \frac{1}{(1-\bar{\zeta}z)}\Big)\omega(\zeta) \mathrm{d} A(\zeta)\right|\lesssim
\frac{1}{1-|z|},\quad{z\in\mathbb{D}}.
\end{equation}
Next we show that
$$
\left|\int_{\mathbb{D}}f(\zeta)\overline{\zeta (B_z^\omega)^{\prime}(\zeta)}\left(\log\frac{1-\bar{\zeta}z}{1-|z|}\right)\omega(\zeta)\mathrm{d}A(\zeta)\right|\lesssim
\frac{1}{1-|z|},\quad{z\in\mathbb{D}}.
$$
Let $g_{3,z}(\zeta)=\zeta (B_z^\omega)^{\prime}(\zeta)\log\frac{1-\bar{z}\zeta}{1-|z|}$ and $0<\varepsilon < \frac{2}{\beta+3}$, where $\beta$
is chosen as in Lemma \ref{Dhat}. Since $f\in\mathcal{B}$ and $\omega\in\mathcal{D}$, then by Lemma \ref{Bloch3},
$$
\left|\int_{\mathbb{D}}f(\zeta)\overline{\zeta (B_z^\omega)^{\prime}(\zeta)}\left(\log\frac{1-\bar{\zeta}z}{1-|z|}\right)\omega(\zeta)\mathrm{d}A(\zeta)\right|
=|\langle f,g_{3,z}\rangle_{L_\omega^2}|
\lesssim\|f\|_{\mathcal{B}}\|g_{3,z}\|_{A_\omega^1}.
$$
Notice that for $z,\zeta\in \mathbb{D}$, $\log\Big(\frac{|1-\bar{z}\zeta|}{1-|z|}\Big)^\varepsilon\leq \Big(\frac{|1-\bar{\zeta}z|}{1-|z|}\Big)^\varepsilon$. By H\"{o}lder's inequality and \eqref{TL2},  we deduce
\begin{eqnarray}\label{HL8}
\left|\int_{\mathbb{D}}f(\zeta)\overline{\zeta (B_z^\omega)^{\prime}(\zeta)}\left(\log\frac{1-\bar{\zeta}z}{1-|z|}\right)\omega(\zeta)\mathrm{d}A(\zeta)\right|
&\lesssim&\frac{\|f\|_{\mathcal{B}}}{(1-|z|)^\varepsilon}\int_{\mathbb{D}}|(1-\bar{\zeta}z)|^{\varepsilon}|(B_{\zeta}^\omega)^{\prime}(z)|
\omega(\zeta)\mathrm{d}A(\zeta) \nonumber\\
&\leq&\frac{\|f\|_{\mathcal{B}}}{(1-|z|)^\varepsilon}I_1(z)^{\frac{\varepsilon}{2}}I_2(z)^{\frac{2-\varepsilon}{2}},\quad{z\in\mathbb{D}},
\end{eqnarray}
where
$$
I_1(z)=\int_{\mathbb{D}}|(1-\bar{\zeta}z)(B_{\zeta}^\omega)^{\prime}(z)|^2\omega(\zeta)\mathrm{d}A(\zeta),\quad{z\in\mathbb{D}},
$$
and
$$
I_2(z)=\int_{\mathbb{D}}|(B_{\zeta}^\omega)^{\prime}(z)|^{\frac{2(1-\varepsilon)}{2-\varepsilon}}\omega(\zeta)\mathrm{d}A(\zeta),\quad{z\in\mathbb{D}}.
$$
First we estimate $I_1(z)$.
From \cite{PR1}, we have
\begin{eqnarray}\label{1-zzeta}
	2(1-\bar{\zeta} z)(B_\zeta^\omega)^{\prime}(z)
	& =&\bar{\zeta}\left(\sum_{k=1}^{\infty} \frac{k(\bar{\zeta} z)^{k-1}}{\omega_{2
			k+1}}-\sum_{k=1}^{\infty} \frac{k(\bar{\zeta} z)^k}{\omega_{2 k+1}}\right) \nonumber\\
& =&\bar{\zeta}\left(J_1+J_2(z, \bar{\zeta})+J_3(z, \bar{\zeta})\right), \quad z, \zeta \in \mathbb{D},
\end{eqnarray}
where $J_1=\frac{1}{\omega_3}$, $J_2(z,\bar{\zeta})=\sum_{k=1}^{\infty} \frac{(\bar{\zeta} z)^k}{\omega_{2 k+3}}$ and $J_3(z,\bar{\zeta})=\sum_{k=1}^{\infty} \frac{k\left(\omega_{2 k+1}-\omega_{2 k+3}\right)}{\omega_{2 k+1} \omega_{2 k+3}}(\bar{\zeta} z)^k.$
Notice that Lemma \ref{Dhat}(iv) implies
\begin{equation}\label{HL10}
k\left(\omega_{2 k+1}-\omega_{2 k+3}\right)=k \int_0^1 s^{2 k+1}\left(1-s^2\right) \omega(s) \mathrm{d} s \lesssim \omega_{2 k+1}, \quad\forall k \in
\mathbb{N}.
\end{equation}
\eqref{1-zzeta} together with Fubini's theorem, Lemma \ref{Dhat}(v) and \cite[Lemma 4]{PR3} yields
$$
\begin{aligned}
\int_{\mathbb{D}}\left|J_2(z, \bar{\zeta})\right|^2 \omega(\zeta) \mathrm{d} A(\zeta)
&\lesssim \int_0^1 \sum_{k=1}^{\infty} \frac{1}{\omega_{2 k+3}^2}r^{2k}|z|^{2 k} \omega(r)\mathrm{d}r\\
& \lesssim \int_0^1\int_0^{|z|r}\frac{\mathrm{d}t}{\widehat{\omega}(t)^2(1-t)^2}\omega(r)\mathrm{d}r+1\\
&=\int_0^{|z|}\frac{1}{\widehat{\omega}(t)^2(1-t)^2}\left(\int_{t/|z|}^1 \omega(r)\mathrm{d}r\right)  \mathrm{d}t+1\\
&\lesssim \frac{1}{\widehat{\omega}(z)(1-|z|)} , \quad z \in \mathbb{D}.
\end{aligned}
$$
Similar argument combining \eqref{HL10} gives
$$\int_{\mathbb{D}}\left|J_3(z, \bar{\zeta})\right|^2 \omega(\zeta) \mathrm{d} A(\zeta)
\lesssim \frac{1}{\widehat{\omega}(z)(1-|z|) }, \quad z \in \mathbb{D}.
$$
Then it follows that
\begin{eqnarray}\label{HL11}
I_1(z)
 &\lesssim &\int_{\mathbb{D}}\left|J_1\right|^2 \omega(\zeta) \mathrm{d} A(\zeta)+\int_{\mathbb{D}}\left|J_2(z, \bar{\zeta})\right|^2
 \omega(\zeta) \mathrm{d} A(\zeta)+ \int_{\mathbb{D}}\left|J_3(z, \bar{\zeta})\right|^2 \omega(\zeta) \mathrm{d} A(\zeta)\nonumber\\
 &\lesssim &\frac{\omega(\mathbb{D})}{\omega_3^2}+\frac{1}{\widehat{\omega}(z)(1-|z|)}
 \lesssim \frac{1}{\widehat{\omega}(z)(1-|z|)}, \quad z \in \mathbb{D}.
\end{eqnarray}
Next we turn to estimate $I_2(z)$. By Lemmas \ref{Dhat}(ii), \ref{Ne1} and the choice of $\varepsilon$, we have
$$
\begin{aligned}
I_2(z)
&\lesssim\int_{0}^{|z|} \frac{\widehat{\omega}(t)^{\frac{\varepsilon}{2-\varepsilon}}}{(1-t)^{\frac{4(1-\varepsilon)}{2-\varepsilon}}}\mathrm{d}t+1\\
&\lesssim\left(\frac{\widehat{\omega}(z)}{(1-|z|)^\beta}\right)^{\frac{\varepsilon}{2-\varepsilon}}\int_{0}^{|z|}\frac{1}{(1-t)^{\frac{4-(\beta+4)\varepsilon}{2-\varepsilon}}}\mathrm{d}t\\
&\lesssim \left(\frac{\widehat{\omega}(z)^{\frac{\varepsilon}{2}}}{(1-|z|)^{1-\frac{3\varepsilon}{2}}}\right)^{\frac{2}{2-\varepsilon}}, \quad z \in
\mathbb{D}.
\end{aligned}
$$
This estimate together with \eqref{HL8} and \eqref{HL11} yields
\begin{equation}\label{HL9}
\left|\int_{\mathbb{D}}f(\zeta)\overline{\zeta (B_z^\omega)^{\prime}(\zeta)}\left(\log\frac{1-\bar{\zeta}z}{1-|z|}\right)\omega(\zeta)\mathrm{d}A(\zeta)\right|
\lesssim \frac{1}{1-|z|},\quad z \in \mathbb{D}.
\end{equation}
Observe that
$$
\log\frac{1}{1-|z|}=\log\frac{1-\bar{\zeta}z}{1-|z|}+\log \frac{1}{1-\bar{\zeta}z},\quad z,\zeta \in \mathbb{D}.
$$
Then \eqref{TL2} combined with \eqref{HL7} and \eqref{HL9} gives
$$
\Big(\log\frac{1}{1-|z|}\Big)(1-|z|)\left|\int_{\mathbb{D}}f(\zeta)(B_{\zeta}^\omega)^{\prime}(z)\omega(\zeta)\mathrm{d}A(\zeta)\right|\lesssim 1,
\quad{|z|\rightarrow 1^-}.
$$
Moreover, by the reproducing formula, we deduce
$$
\sup_{z\in\mathbb{D}}|f^{\prime}(z)|(1-|z|^2)\log\frac{1}{1-|z|}<\infty,
$$
which combined with $f\in \mathcal{B}$ shows that $f\in\mathcal{LB}$, as desired.

\hspace{10pt}

Sufficiency. Assume that $f\in \mathcal{LB}$. Then it is obvious that $f\in\mathcal{B}$. Define
$$
S_f^\omega(h)(z)=\int_{\mathbb{D}}\Big(f(\zeta)-f(z)\Big)\overline{B_z^\omega(\zeta)}h(\zeta)\omega(\zeta)\mathrm{d}A(\zeta),\quad h\in L^\infty,\,\, z\in \mathbb{D}.
$$
Observe that
$$
\begin{aligned}
(1-|z|^2)(S_f^\omega(h))^{\prime}(z)
=&-\int_{\mathbb{D}}f^{\prime}(z)(1-|z|^2)\overline{B_z^\omega(\zeta)}h(\zeta)\omega(\zeta)\mathrm{d}A(\zeta)\\
&+(1-|z|^2)\int_{\mathbb{D}}\Big(f(\zeta)-f(z)\Big){(B_\zeta^\omega)^\prime(z)} h(\zeta)\omega(\zeta)\mathrm{d}A(\zeta),\quad z\in \mathbb{D}.
\end{aligned}
$$
By Lemma \ref{Ne1}, we have
\begin{eqnarray}\label{H1}
\int_{\mathbb{D}}|f^{\prime}(z)|(1-|z|^2)|B_z^\omega(\zeta)||h(\zeta)|\omega(\zeta)\mathrm{d}A(\zeta)
&\leq&|f^{\prime}(z)|(1-|z|^2)\|h\|_{\infty}\|B_z^\omega\|_{A_\omega^1}\nonumber\\
&\asymp&|f^{\prime}(z)|(1-|z|^2)\Big(\log\frac{e}{1-|z|}\Big)\|h\|_{\infty}\nonumber\\
&\lesssim& \|h\|_{\infty},\quad z\in \mathbb{D}.
\end{eqnarray}
By Lemma \ref{Dweight1}, there exists $\gamma_0=\gamma_0(\omega)>0$ such that $\omega_{[-\gamma]}\in \mathcal{D}$ for each $\gamma\in (0,\gamma_0]$. Take $0<\gamma<\min\{\frac{1}{2+\beta},\frac{\gamma_0}{1+\gamma_0}\}$, where $\beta$ is chosen as in
Lemma \ref{Dhat}. By Lemma \ref{Bloch} and \eqref{beta}, we get
$$
\begin{aligned}
&\quad\int_{\mathbb{D}}|f(\zeta)-f(z)||{(B_\zeta^\omega)^\prime(z)}||h(\zeta)|\omega(\zeta)\mathrm{d}A(\zeta)\\
&\lesssim\|h\|_{\infty}\|f\|_{\mathcal{B}}\int_{\mathbb{D}}\beta(\zeta,z)|(B_\zeta^\omega)^\prime(z)|\omega(\zeta)\mathrm{d}A(\zeta)\\
&\lesssim\frac{\|h\|_{\infty}\|f\|_{\mathcal{B}}}{(1-|z|)^\gamma}\int_{\mathbb{D}}|1-\bar{\zeta}z|^{2\gamma}|(B_\zeta^\omega)^\prime(z)|
\omega_{[-\gamma]}(\zeta)\mathrm{d}A(\zeta),\quad{z\in\mathbb{D}}.
\end{aligned}
$$
Combining the above inequality with the estimate in \cite{PR1}, we obtain
\begin{equation}\label{H2}
\int_{\mathbb{D}}|f(\zeta)-f(z)||{(B_\zeta^\omega)^\prime(z)}||h(\zeta)|\omega(\zeta)\mathrm{d}A(\zeta)\lesssim\frac{\|h\|_{\infty}}{1-|z|},\quad{z\in\mathbb{D}}.
\end{equation}
It follows from \eqref{H1} and \eqref{H2} that $S_h^\omega(f)\in\mathcal{B}$ with
\begin{equation}\label{H3}
\|S_h^\omega(f)\|_{\mathcal{B}}\lesssim\|h\|_{\infty}.
\end{equation}

For $\omega\in\mathcal{D}$, it follows from Lemma \ref{Bloch3} and \eqref{H3} that
$$
\left|\int_{\mathbb{D}}g(u)\overline{S_f^\omega(h)(u)}\omega(u)\mathrm{d}A(u)\right|
\leq\|g\|_{A_\omega^1} \|S_f^\omega(h)\|_{\mathcal{B}}
\lesssim \|g\|_{A_\omega^1} \|h\|_{\infty},\quad{g\in H^\infty, h\in L^\infty}.
$$
From the above inequality and Fubini's theorem, we deduce
$$
\begin{aligned}
&\left|\int_{\mathbb{D}}H_{\bar{f}}^\omega(g)(\zeta)\overline{h(\zeta)}\omega(\zeta)\mathrm{d}A(\zeta)\right|\\
=&\left|\int_{\mathbb{D}}\int_{\mathbb{D}}\Big(\overline{f(\zeta)}-\overline{f(u)}\Big)B_u^\omega(\zeta)\overline{h(\zeta)}\omega(\zeta)\mathrm{d}A(\zeta)
g(u)\omega(u)\mathrm{d}A(u)\right|\\
=&\left|\int_{\mathbb{D}}g(u)\overline{S_f^\omega(h)(u)}\omega(u)\mathrm{d}A(u)\right|\\
\lesssim& \|g\|_{A_\omega^1} \|h\|_{\infty},\quad{g\in H^\infty, h\in L^\infty},
\end{aligned}
$$
which gives
$$
\|H_{\bar{f}}^\omega(g)\|_{L_\omega^1}\lesssim\|g\|_{A_\omega^1},\quad{g\in H^\infty}.
$$
A standard density argument shows that $H_{\bar{f}}:A_\omega^1\rightarrow L_\omega^1$ is bounded, completing the proof.
\qed

Notice that for a bounded symbol $f$, if the Toeplitz operator $T_f^\omega$ is bounded on $A_{\omega}^1$, then the Hankel operator $H_f^\omega$ is also bounded on $A_{\omega}^1$. Actually, we have
\begin{equation}\label{H5}
\|H_f^\omega (g)\|_{L_\omega^1}\leq\|fg\|_{L_\omega^1}+\|T_f^\omega(g)\|_{A_\omega^1}\leq(\|f\|_\infty+\|T_f^\omega\|_{A_\omega^1})\|g\|_{A_\omega^1},\quad{g\in A_\omega^1}.
\end{equation}

We end this section with a proposition showing that $H_f^\omega$ is bounded from $A_\omega^1$ to $L_\omega^1$ if the symbol $f\in \mathrm{BMO}_{\omega,\log}^p$ admits a particular form.
 \begin{proposition}\label{HBMO}
Let $\omega\in\mathcal{D}$ and $1<p<\infty$. If $f \in \mathrm{BMO}_{\omega,\log}^p$ such that $f=f_1+f_2$ with $f_1\in \mathrm{BO}_{\log } \cap L^{\infty}$ and
$f_2\in \mathrm{BA}^p_{\omega,\log}$, then $H_f^\omega: A_\omega^1 \rightarrow L_\omega^1$ is bounded.
\end{proposition}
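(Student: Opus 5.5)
By linearity, $H_f^\omega=H_{f_1}^\omega+H_{f_2}^\omega$ on $H^\infty$, so it suffices to show that each piece is bounded from $A_\omega^1$ to $L_\omega^1$ and then extend by density.

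\emph{The piece $f_1\in \mathrm{BO}_{\log}\cap L^\infty$.} Since $\mathrm{BO}_{\log}\subseteq \mathrm{BMO}^p_{\omega,\log}$, $f_1$ lies in $L^\infty\cap\mathrm{BMO}^p_{\omega,\log}$. Proposition \ref{main3} then shows that $T_{f_1}^\omega$ is bounded on $A_\omega^1$, with
\[
\|T_{f_1}^\omega\|_{A_\omega^1\to A_\omega^1}\lesssim \|f_1\|_\infty+\|f_1\|_{\mathrm{BMO}^p_{\omega,\log}}.
\]
Because $f_1$ is bounded, the estimate \eqref{H5} applies directly and yields
\[
\|H_{f_1}^\omega g\|_{L^1_\omega}\le \bigl(\|f_1\|_\infty+\|T_{f_1}^\omega\|_{A_\omega^1\to A_\omega^1}\bigr)\|g\|_{A_\omega^1}.
\]

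\emph{The piece $f_2\in \mathrm{BA}^p_{\omega,\log}$.} By H\"older's inequality, $\mathrm{BA}^p_{\omega,\log}\subset\mathrm{BA}^1_{\omega,\log}$. Hence $f_2$ satisfies the hypothesis of Lemma \ref{BAlog} for some $r$ (any $r\ge r_0$ for which \eqref{Dweight2} holds), and $T_{f_2}^\omega$ is bounded on $A_\omega^1$. We cannot use \eqref{H5} here, since $f_2$ need not be bounded. Instead we write
\[
\|H_{f_2}^\omega g\|_{L^1_\omega}\le \|f_2g\|_{L^1_\omega}+\|T^\omega_{f_2}g\|_{A^1_\omega},
\]
so it remains to show that multiplication by $f_2$ maps $A_\omega^1$ boundedly into $L_\omega^1$. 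Equivalently, $\mathrm{d}\mu=|f_2|\omega\,\mathrm{d}A$ should be a $1$-Carleson measure for $A^1_\omega$. By the Carleson measure characterization already used in the proof of Theorem \ref{TB} (\cite[Theorem 2]{LR}), this reduces to
\[
\sup_z \frac{\mu(D(z,r))}{\omega(D(z,r))}<\infty.
\]
That supremum is finite because it is controlled by $\|f_2\|_{\mathrm{BA}^1_{\omega}}$. The logarithmic factor in the definition of $\mathrm{BA}^p_{\omega,\log}$ only helps, since $\log\frac{e}{1-|z|}\ge1$.

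Adding the two estimates gives $\|H_f^\omega g\|_{L^1_\omega}\lesssim\|g\|_{A^1_\omega}$ for $g\in H^\infty$, and density of $H^\infty$ in $A^1_\omega$ completes the proof. The only delicate point is the unbounded piece $f_2$, because \eqref{H5} is unavailable for it. The Carleson measure argument handles this, and I expect no real obstacle, since $\mathrm{BA}^p_{\omega,\log}\subset \mathrm{BA}^1_\omega$ gives the required Carleson bound directly.
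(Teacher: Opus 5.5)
Your proof is correct. For $f_1$, and for $T_{f_2}^\omega$ via Lemma \ref{BAlog}, it coincides with the paper's argument; the genuine difference is how you show that $M_{f_2}:A_\omega^1\to L_\omega^1$ is bounded.

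The paper argues by duality. For $g\in A^1_\omega$ and $h\in L^\infty$ it writes $\langle f_2g,h\rangle_{L^2_\omega}=\langle g,P_\omega(\bar f_2h)\rangle_{L^2_\omega}$. It then combines $(A_\omega^1)^*\simeq\mathcal B$ (Lemma \ref{Bloch3}), the boundedness of $P_\omega:\mathrm{BMO}^p_\omega\to\mathcal B$ (Lemma \ref{Bloch2}), the inclusion $\mathrm{BA}^p_\omega\subset\mathrm{BMO}^p_\omega$, and $\|\bar f_2h\|_{\mathrm{BA}^p_\omega}\le\|h\|_\infty\|f_2\|_{\mathrm{BA}^p_\omega}$.

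You instead note that $\|M_{f_2}g\|_{L^1_\omega}=\int_{\mathbb D}|g|\,|f_2|\,\omega\,\mathrm{d}A$. So the question is exactly whether $|f_2|\omega\,\mathrm{d}A$ is a Carleson measure for $A^1_\omega$, and the Bergman-disc test for this is literally the $\mathrm{BA}^1_\omega$ condition.

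Your route is more direct and uses only $f_2\in\mathrm{BA}^1_\omega$ at this step, whereas the paper's appeal to Lemma \ref{Bloch2} needs $p>1$. Combined with Lemma \ref{BAlog}, your argument therefore proves the proposition under the weaker hypothesis $f_2\in\mathrm{BA}^1_{\omega,\log}$.

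The price is that you need the Carleson embedding characterization at $p=1$; the paper only invokes it for $1<p<\infty$ in the proof of Theorem \ref{TB}. This case is elementary, though. Cover $\mathbb D$ by an $r$-lattice and bound $\sup_{D(a_j,r)}|g|$ by subharmonicity. Replace $\omega(D(a_j,r))\asymp\widehat\omega(a_j)(1-|a_j|)$ by $\widetilde\omega$ integrated over a slightly larger disc, and finish with $\|g\|_{A^1_{\widetilde\omega}}\asymp\|g\|_{A^1_\omega}$ from Lemma \ref{DR}. This is the same device as in the first half of the proof of Lemma \ref{BA}.

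The paper's version, by contrast, stays within the $A^1_\omega$--$\mathcal B$ duality framework that drives the rest of that section.
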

\begin{proof}
It suffices to show that both $H_{f_1}^\omega$ and $H_{f_2}^\omega$ are bounded from $A_\omega^1$ to $L_\omega^1$.
By \eqref{H5}, it follows directly from Proposition \ref{main3} that $H_{f_1}^\omega:A_\omega^1\rightarrow L_\omega^1$ is bounded.

Let $M_f$ be the multiplication operator by $f$. Suppose $1<p<\infty$. We first show that for $f_2\in \mathrm{BA}^p_{\omega,\log}$, the multiplication operator $M_{f_2}: A_\omega^1\rightarrow L_\omega^1$ is bounded.
Obviously $f_2\in \mathrm{BA}_\omega^p$. From Lemma \ref{Bloch2}, we have $\|P_\omega(f_2)\|_{\mathcal{B}}\lesssim \|f_2\|_{\mathrm{BA}_\omega^p}.$ Notice that
for $h\in L^\infty$, $f_2h\in \mathrm{BA}_\omega^p$ and $\|f_2h\|_{\mathrm{BA}_\omega^p}\leq \|h\|_{\infty}\|f_2\|_{\mathrm{BA}_\omega^p}$.
For $g\in A_\omega^1$ and $h\in L^\infty$,  it follows from the reproducing formula for $A_\omega^1$ and Lemma \ref{Bloch3} that
$$
\begin{aligned}
|\langle M_{f_2}(g),h\rangle_{L_\omega^2}|
&=|\langle P_\omega(g),\bar{f_2}h \rangle_{L_\omega^2}|
=|\langle g, P_\omega(\bar{f_2}h) \rangle_{L_\omega^2}|
\leq \|P_\omega(\bar{f_2}h)\|_{\mathcal{B}}\|g\|_{A_\omega^1}\\
&\lesssim \|\bar{f_2}h\|_{\mathrm{BA}_\omega^p} \|g\|_{A_\omega^1}
\leq \|h\|_{\infty}\|f_2\|_{\mathrm{BA}_\omega^p}\|g\|_{A_\omega^1},
\end{aligned}
$$
which shows $\|M_{f_2}\|_{A_\omega^1\rightarrow L_\omega^1}\lesssim \|f_2\|_{\mathrm{BA}_\omega^p}.$

Since $\mathrm{BA}^p_{\omega,\log}\subseteq \mathrm{BA}^1_{\omega,\log}$ for $p>1$, Lemma \ref{BAlog} implies that $T_{f_2}^\omega$ is bounded on $A_\omega^1$. Then $H_{f_2}^\omega=M_{f_2}-T_{f_2}^\omega$ is bounded from $A_\omega^1$ to $L_\omega^1$ too.
This completes the proof.
\end{proof}

For $1<p<\infty$, the problem whether $H_f^\omega:A_\omega^1\rightarrow L_\omega^1$ is bounded for every $f\in \mathrm{BMO}_{\omega,\log}^p$ remains open even in the classical Bergman space.

\end{document}